\newcommand{\N}{\mathbb{N}} 
\newcommand{\Z}{\mathbb{Z}}
\newcommand{\F}{\mathbb{F}}
\newcommand{\Q}{\mathbb{Q}}
\newcommand{\OO}{\mathcal{O}}
\newcommand{\Li}[2]{\left(\frac{#1}{#2}\right)}
\newcommand{\alphas}{\alpha_0,\ldots,\alpha_{d-1}}
\newcommand{\nice}{\textit{convenient} }
\newcommand{\pp}{\mathfrak{p} }
\newcommand{\raddel}{\rad(\Delta_\mathbf{U})}
\DeclareMathOperator{\rad}{rad}
\DeclareMathOperator{\fail}{Fail}
\DeclareMathOperator{\lcm}{lcm}
\DeclareMathOperator{\adj}{adj}
\DeclareMathOperator{\gal}{Gal}
\DeclareMathOperator{\res}{res}
\DeclareMathOperator{\disc}{disc}
\newtheorem{lemma}{Lemma}
\newtheorem{theorem}{Theorem}
\newtheorem{defin}{Definition}
\newtheorem{example}{Example}
\newtheorem{corollary}{Corollary}[theorem]
\begin{document}

\begin{frontmatter}

%% Title, authors and addresses

%% use the tnoteref command within \title for footnotes;
%% use the tnotetext command for theassociated footnote;
%% use the fnref command within \author or \affiliation for footnotes;
%% use the fntext command for theassociated footnote;
%% use the corref command within \author for corresponding author footnotes;
%% use the cortext command for theassociated footnote;
%% use the ead command for the email address,
%% and the form \ead[url] for the home page:
 %\title{On Dold condition and fail factor of linear recurrent sequences}
 %\author{Mateusz Rajs\corref{cor1}\fnref{label2}}
 %\ead{mat.rajs@gmail.com}
 %%\ead[url]{-}
 %%\fntext[label2]{}
 %%\cortext[cor1]{}
 %\affiliation{organization={Jagiellonian University},
             %addressline={ul. lojasiewicza},
             %city={krakow},
             %postcode={31-???},
             %state={Malopolska},
             %country={Poland}}
 %\fntext[label3]{}

    \title{On Dold condition and fail factor of linear recurrent sequences}

%% use optional labels to link authors explicitly to addresses:
%% \author[label1,label2]{}
%% \affiliation[label1]{organization={},
%%             addressline={},
%%             city={},
%%             postcode={},
%%             state={},
%%             country={}}
%%
%% \affiliation[label2]{organization={},
%%             addressline={},
%%             city={},
%%             postcode={},
%%             state={},
%%             country={}}

\author{Mateusz Rajs} %% Author name

 %Author affiliation
\affiliation{organization={Jagiellonian University},%Department and Organization
            addressline={Łojasiewicza 6}, 
            city={Kraków},
            postcode={30-348}, 
            state={Małopolska},
        country={Poland}}

%% Abstract
\begin{abstract}
 A sequence $\mathbf{A}$ is said to be realizable if satisfies so called sign and Dold conditions. We will say that a sequence almost satisfies the Dold condition if there exists a constant $c\in\N_+$ such that $(cA_n)_{n\in\N_+}$ satisfies the Dold condition. In this paper we give a characterisation of sequences defined by linear recursion of any order that almost satisfy the Dold condition. We also give an upper bound on the value of $c$.

\end{abstract}

%%%Graphical abstract
%\begin{graphicalabstract}
%%\includegraphics{grabs}
%\end{graphicalabstract}

%%%Research highlights
%\begin{highlights}
%\item Research highlight 1
%\item Research highlight 2
%\end{highlights}

%% Keywords
\begin{keyword}
Dold condition \sep linear recurrence \sep realizable sequence \sep fail factor
%% keywords here, in the form: keyword \sep keyword

%% PACS codes here, in the form: \PACS code \sep code
\MSC[2020] 11C08  \sep 11R20
%% MSC codes here, in the form: \MSC code \sep code
%% or \MSC[2008] code \sep code (2000 is the default)

\end{keyword}

\end{frontmatter}

%% Add \usepackage{lineno} before \begin{document} and uncomment 
%% following line to enable line numbers
%% \linenumbers

%% main text
%%

\section{Introduction}

We denote by $\Z$, $\N$, $\N_+$, $\Q$ the sets of all integers, non-zero integers, positive integers, and rational numbers, respectively. By $\rad(n)$ for $n\in \N_+$ we mean the radical of $n$, i.e. the greatest square-free divisor of $n$. Moreover, we denote a sequence of integers by a bold letter, e.g. $\mathbf{A} =(A_n)_{n\in\N_+}$.

We say that a sequence $\mathbf A$ of non-negative integers is realizable if there exists a map $T:X\to X$ such that $A_n$ is a number of fixed points of the map $T^n$. There also exists the following equivalent condition for a sequence to be realizable: 

%\noindent The sequence $(\mathbf A_n)$ is realizable if for every $n\in\N_+$ the following conditions are satisfied
\begin{itemize}
    \item [(r1)] $n \mid \sum_{d\mid n} \mu(d)A_{\frac{n}{d}}$ \quad for $n \in \N_+$, 
    \item [(r2)] $\sum_{d\mid n} \mu(d) A_{\frac{n}{d}} \geqslant 0$ \quad for $n \in \N_+$.
\end{itemize}
The condition (r1) is called the Dold condition and (r2) is called the sign condition.

When well-known combinatorial sequences are examined, one can see that some of them satisfy the Dold condition or are even realizable.
In \cite{combi} Zhang showed (among other classes of sequences) that the Apéry's
numbers are realizable.% and that some (e.g. Catalan numbers) are not.   
Moss in \cite{Phd} (Theroem 5.3.2) showed that the sequence of absolute values of Euler numbers also is realizable.
In \cite{Fibonacci} Moss and Ward proved that the sequence of Fibonacci numbers is not almost realizable and also that the sequence $(5F_{n^2})_{n\in\N}$ of Fibonacci numbers with square indices multiplied by 5 is  realizable . 
Beukers et al. \cite{laurent} examined which coefficients of Laurent series
associated to a multivariate rational function satisfy the Dold condition (which they call Gauss congruences).

In \cite{Stirling}  Miska and Ward defined the repair factor $\fail(\mathbf A)$ as the least positive integer $c$, such that the sequence $(cA_n)_{n\in\N_+}$ is realizable. If such a number does not exist we put $\fail(\mathbf A) = \infty$. If $\fail(\mathbf A) < \infty$, then we call the sequence $\textbf{A}$ almost realizable.Note that $\fail(\mathbf{A})$ is the smallest positive integer $c$ such that $(cA_n)_{n\in\N_+}$ satisfies the Dold condition as multiplication of a sequence by a positive number does not change the validity of the sign condition.
%and we put $\fail(A) =\infty$ if there is no such number.

In this paper we consider any sequence $\mathbf{A}$ that is defined by linear recurrence of any order with a particular focus on order $2$. We will determine when such a sequence almost satisfies the Dold condition and prove an upper estimation for the value of $\fail(\mathbf{A})$. Additionally we will show that for some $t$ the sequence $({A}_{n^t})_{n\in\N_+}$ almost satisfies the Dold condition.

\section{Preliminaries and notation }

From now on by $\mathbf U$ we will denote any sequence defined by linear recursion
\begin{align} \label{def_Un} \tag{d1}
    U_n = r_1U_{n-1}+ r_2U_{n-2} + \cdots + r_d U_{n-d} 
    \quad\quad \text{for $n\geqslant d$}
\end{align}
where $r_1,\ldots,r_d\in \Z$ . We will call the number $d$ the order of this sequence. The characteristic polynomial of $\mathbf U$ is defined as 
\begin{align}\label{def_Char} \tag{d2}
    C_{\mathbf U}(x) := x^d - r_1x^{d-1} - \cdots - r_d.
\end{align}
We denote its roots by $\alphas$, so $ C_{\mathbf U}(x) = \prod_{i=0}^{d-1}(x-\alpha_i)$. The discriminant of the characteristic polynomial is
\begin{align}\label{def_Delta} \tag{d3}
    \Delta_{\mathbf U} := \prod_{0\leqslant i < j < d} (\alpha_i-\alpha_j)^2.
\end{align}

The definition of the Dold condition given in (r1) is troublesome. It requires calculation of some nontrivial sums. The lemma below proposes an equivalent definition, which is much more convenient.

\begin{lemma} \label{lem_betterDef}
    A sequence $\mathbf U$ satisfies the Dold condition if and only if  the number $U_{p^en} - U_{p^{e-1}n}$ is divisible by $p^e$ for every prime $p$ and $n, e \in\N_+$ such that $p\nmid n$.
\end{lemma}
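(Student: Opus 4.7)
The plan is to pass through Möbius inversion. Let $g(n) := \sum_{d \mid n} \mu(d) U_{n/d}$, so that the Dold condition reads $n \mid g(n)$ for every $n \in \N_+$, while Möbius inversion gives $U_n = \sum_{d \mid n} g(d)$.

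The heart of the argument is the identity
\begin{equation*}
 U_{p^e n} - U_{p^{e-1}n} \;=\; \sum_{d \mid n} g(p^e d),
\end{equation*}
valid for every prime $p$, every $e \in \N_+$, and every $n$ with $p \nmid n$. To establish it I would expand both $U$-values via Möbius inversion and observe that the divisors of $p^e n$ failing to divide $p^{e-1} n$ are exactly those of the form $p^e d$ with $d \mid n$. Given this identity, the forward implication is immediate: assuming the Dold condition, $p^e d \mid g(p^e d)$ for each $d \mid n$, so $p^e$ divides every summand on the right.

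For the converse I would fix a prime power $p^e$ and prove $p^e \mid g(p^e n)$ by strong induction on $n \in \N_+$ with $p \nmid n$. Solving the identity for the top term yields
\begin{equation*}
 g(p^e n) \;=\; \bigl(U_{p^e n} - U_{p^{e-1}n}\bigr) \;-\; \sum_{\substack{d \mid n \\ d < n}} g(p^e d),
\end{equation*}
so the assumption takes care of the first summand and the inductive hypothesis takes care of the rest (with the base case $n=1$ collapsing to the hypothesis itself). To deduce the Dold condition for an arbitrary $m = \prod_i p_i^{e_i}$, I would apply this with $n = m/p_i^{e_i}$ for each $i$, obtaining $p_i^{e_i} \mid g(m)$; pairwise coprimality of the $p_i^{e_i}$ then gives $m \mid g(m)$. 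The only step that requires genuine attention is the bookkeeping for the key identity, namely the correct indexing of divisors of $p^e n$ versus $p^{e-1}n$; everything else is routine divisibility tracking.
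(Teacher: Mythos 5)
Your proof is correct, and it is the M\"obius-dual of the paper's argument rather than a verbatim copy. The paper's central identity rewrites $g(p^k s)=\sum_{d\mid p^k s}\mu(p^k s/d)U_d$ as $\sum_{d\mid s}\mu(s/d)\bigl(U_{dp^k}-U_{dp^{k-1}}\bigr)$; you invert this and instead write $U_{p^e n}-U_{p^{e-1}n}=\sum_{d\mid n}g(p^e d)$. Both identities encode the same decomposition of the divisor lattice of $p^e n$ into its $p$-part and $p$-free part (your bookkeeping is sound: since $p\nmid n$, every divisor of $p^e n$ is $p^j d$ with $0\leqslant j\leqslant e$ and $d\mid n$, and those not dividing $p^{e-1}n$ are exactly those with $j=e$). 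The practical difference is where the induction lands: in the paper, the implication from the Dold condition to divisibility of the differences is the one proved by induction on the $p$-free part, while the converse is a direct computation; in your version the forward implication is a one-line consequence of the identity (each summand $g(p^e d)$ is divisible by $p^e d$, hence by $p^e$), and the induction is needed for the converse, where you solve the identity for the top term $g(p^e n)$ and invoke the strong inductive hypothesis on proper divisors of $n$. The final assembly step --- obtaining $p_i^{e_i}\mid g(m)$ for each prime power exactly dividing $m$ and concluding $m\mid g(m)$ by coprimality --- matches the paper's. Neither route is materially shorter, but yours makes the easy direction visibly easy.
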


\begin{proof}
  
    $(\Leftarrow)$
    The following part of the proof is given in \cite{Fibonacci}. Fix $n\in \N_+$ let $n=p_1^{k_1}\cdots p^m_{k_m}$ be the prime decomposition of $n$. We select one of the primes $p_i$ and set $p:=p_i$, $k:=k_i$ and $n=p^ks$. We have
      
    \begin{equation}
    \begin{split}
        \sum_{d\mid n} \mu \left(\frac{n}{d}\right)U_{d} = 
        \sum_{d\mid p^ks} \mu \left(\frac{p^ks}{d}\right)U_{d} =
        \sum_{d\mid s}\left( \mu\left(\frac{s}{d}\right)U_{dp^k} + \mu\left(\frac{sp}{d}\right)U_{dp^{k-1}} \right) = \label{lem_betterDef_e1}\\
        \sum_{d\mid s} \mu\left(\frac{s}{d}\right) \left(U_{dp^k} -U_{dp^{k-1}} \right) 
    \end{split}
   \end{equation}
         
    From the assumption, we know that $p^k\mid U_{dp^k} -U_{dp^{k-1}}$ hence $p_i^{k_i}= p^k\mid\sum_{d\mid n} \mu \left(\frac{n}{d}\right)U_{d} $. Since the choice of $p_i$ and $k_i$ was arbitrary, it follows that  $n\mid\sum_{d\mid n} \mu \left(\frac{n}{d}\right)U_{d} $.

    $(\Rightarrow)$
    Let $p$ be any prime and $e$ any positive integer. We use induction over $s$ to prove that if $\mathbf U$ satisfies the Dold condition, then $p^e\mid U_{p^es}-U_{p^{e-1}s}$. For $s=1$ the assertion is true. Let us now assume that it is true for $1,2,\ldots,s-1$. Thus,
    \begin{align}
        0 &\equiv 
        \sum_{d\mid p^ks} \mu \left(\frac{p^ks}{d}\right)U_{d} \overset{(\ref{lem_betterDef_e1})}{\equiv}
        \sum_{d\mid s} \mu\left(\frac{s}{d}\right) \left(U_{dp^k} -U_{dp^{k-1}} \right) \equiv \\ &\equiv
        U_{sp^k} -U_{sp^{k-1}} + \sum_{d\mid s,\ d<s} \mu\left(\frac{s}{d}\right) \left(U_{dp^k} -U_{dp^{k-1}} \right) \overset{\text{ind. step}}{\equiv} \\ &\equiv
        U_{sp^k} -U_{sp^{k-1}} + 0 \pmod {p^e}.
    \end{align}

    This ends the proof.
  
\end{proof}

In general, the roots $\alphas$ are not integers. Let us define $K=\Q(\alphas)$ and let $\OO_K$ be the ring of integers of $K$. As $\alphas$ are roots of a monic polynomial they belong to $\OO_K$. Instead of examining equivalences modulo $p$ over $\Z$ we will examine equivalences mod $\pp$ over $\OO_K$, where $\pp$ is a prime ideal of $\OO_K$. 

Let $p$ be any prime in $\Z$. Then, the ideal generated by this prime in $\OO_K$ decomposes into prime ideals over $\OO_K$ as follows:
$$p\OO_K = \prod_i\pp_i^{e_i},$$ where $\pp_i\neq\pp_j$ for $i\neq j$. The exponent $e_i$ is called ramification index and is grater than 1 only for finitely many primes $p$. From now on, $\pp$ will always denote prime ideal of $\OO_K$, $p$ be a unique positive prime integer that belongs to $\pp$, and $e$ will be the ramification index of $p$ in $\OO_K$. 

Now we can write the following equivalent statements:
\begin{enumerate}
    \item[(p1)]  $\mathbf U$ satisfies the Dold condition;
    \item[(p2)] $\forall_{p \text{ prime}}\ \forall_{s,k>0}:\ U_{p^ks} \equiv U_{p^{k-1}s} \pmod {p^k} $;
    \item[(p3)] $\forall_{\pp \text{ prime}}\ \forall_{s,k>0}:\ U_{p^ks} \equiv U_{p^{k-1}s} \pmod {\pp^{ek}} $;
\end{enumerate}

The equivalence (p1) $\Leftrightarrow$ (p2) is exactly the statement of Lemma \ref{lem_betterDef}, while the equivalence (p2) $\Leftrightarrow$ (p3) comes from Chinese remainder theorem.

\vskip 0.3cm
The following lemma will be useful in the sequel.

\begin{lemma}\label{lem_allInOne}
    Let $K$ be a number field and $x,y\in\OO_K$. Let $\pp$ be a prime ideal of $\OO_K$ lying over a prime number $p$. Then 
    % satisfying $x\equiv y\pmod{\pp^d}$ for some $d>0$ and prime ideal $\pp$, then $x^{p^k}\equiv y^{p^k} \pmod{\pp^{d+k}}$ for every $k\geqslant0$ f
    $$ x \equiv y \pmod{\pp^{de}} \quad\Rightarrow\quad x^{\pp^k} \equiv y^{p^k} \pmod{\pp^{e(d+k)}}$$
    for $k, d\in \N_+$.
\end{lemma}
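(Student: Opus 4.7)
The plan is to induct on $k$, with $d$ fixed. The base case $k = 0$ is exactly the hypothesis $x \equiv y \pmod{\pp^{de}}$; iterating the inductive step then yields the lemma for every $k \in \N_+$.

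For the inductive step, assume $x^{p^k} \equiv y^{p^k} \pmod{\pp^{e(d+k)}}$ and write $x^{p^k} = y^{p^k} + w$ with $w \in \pp^{e(d+k)}$. Applying the binomial theorem in $\OO_K$ gives
$$x^{p^{k+1}} = (y^{p^k} + w)^p = y^{p^{k+1}} + \sum_{i=1}^{p}\binom{p}{i}\, y^{p^k(p-i)}\, w^i,$$
so it suffices to show that every summand on the right lies in $\pp^{e(d+k+1)}$. This I would split into two cases according to whether $i$ is an interior index or the extreme index $i = p$.

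For $1 \leq i \leq p-1$, the binomial coefficient $\binom{p}{i}$ is divisible by $p$, and since $p \in p\OO_K \subseteq \pp^e$ by the definition of the ramification index, this contributes a factor of $\pp^e$. Combined with $w^i \in \pp^{ie(d+k)}$, the $i$-th summand lies in $\pp^{e(1 + i(d+k))}$, and the required inequality $1 + i(d+k) \geq d+k+1$ is immediate from $i \geq 1$. For $i = p$ the binomial coefficient is $1$, but now $w^p \in \pp^{pe(d+k)}$ and the required bound reduces to $(p-1)(d+k) \geq 1$, which holds for $p \geq 2$ and $d + k \geq 1$.

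The main obstacle is not conceptual but purely bookkeeping: one must track $\pp$-adic valuations uniformly across all $p+1$ terms of the expansion. The single nontrivial algebraic input is the translation $p \mid \binom{p}{i} \Rightarrow \binom{p}{i} \in \pp^e$ via the ramification index, which is what causes the exponent on the right-hand side of the conclusion to grow by exactly $e$ with each application of the inductive step.
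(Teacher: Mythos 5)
Your proof is correct and follows essentially the same route as the paper: induction on $k$, binomial expansion of $(y^{p^k}+w)^p$, using $p\in\pp^e$ to handle the middle binomial terms and the valuation bound $pe(d+k)\geqslant e(d+k+1)$ for the $w^p$ term. Your write-up is in fact slightly more explicit than the paper's (which abbreviates the middle terms as $pz(\ldots)$), but there is no substantive difference.
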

\begin{proof}
    We shall establish this proposition through induction. The base case for $k=0$ is given as an assumption. To prove the induction step we assume that $x^{p^{k-1}} \equiv y^{p^{k-1}} \pmod{\pp^{e(d+k-1)}}$. This implies that $x^{p^{k-1}} \equiv y^{p^{k-1}} + z\pmod {\pp^{e(d+k)}}$ for some $z\in \pp^{e(d+k-1)}$. By raising both sides to the power of $p$ we get 
    \begin{align}
        x^{p^k} \equiv 
        \left(y^{p^{k-1}} + z\right)^p \equiv
        y^{{p^{k-1}}\cdot p} + pz(\ldots) + z^p \equiv
        y^{p^{k}} \pmod {\pp^{e(d+k)}}.
    \end{align}
    The last congruence is true because $pz\in \pp^{e(d+k)}$ and $z^p\in\pp^{ep(d+k-1)}  \subset \pp^{e(d+k)}$. (because $d+k \geqslant 2$). This completes the inductive proof.
\end{proof}

\begin{corollary} \label{c_fermat}
    For any $x\in\Z$ by Fermat's little theorem we have $x^p\equiv x\pmod{p}$ so by the above Lemma we get 
    \begin{align}
            x^{p^{k}} \equiv x^{p^{k-1}}\pmod{p^k}
            \quad\quad\text{for any $k\in\N_+$}.
    \end{align}
\end{corollary}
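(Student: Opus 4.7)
The plan is to deduce the corollary as a direct specialisation of Lemma \ref{lem_allInOne}, applied to the base field $K = \Q$. Here $\OO_K = \Z$, any prime ideal has the form $\pp = p\Z$ for a rational prime $p$, and the ramification index is $e = 1$. Under these identifications, congruence modulo $\pp^m$ coincides with the ordinary congruence modulo $p^m$, so Lemma \ref{lem_allInOne} reads: if $x \equiv y \pmod{p^d}$ then $x^{p^k} \equiv y^{p^k} \pmod{p^{d+k}}$.

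I would then use Fermat's little theorem $x^p \equiv x \pmod{p}$ as the input hypothesis. To match the shape of the lemma, take the variable of the lemma labelled $x$ to be $x^p$ and the variable labelled $y$ to be $x$; the congruence $x^p \equiv x \pmod{p}$ is then the hypothesis with $d = 1$. The lemma yields
\begin{align*}
(x^p)^{p^k} \equiv x^{p^k} \pmod{p^{1+k}},
\end{align*}
that is, $x^{p^{k+1}} \equiv x^{p^{k}} \pmod{p^{k+1}}$. Re-indexing $k \mapsto k-1$ gives the desired congruence $x^{p^{k}} \equiv x^{p^{k-1}} \pmod{p^{k}}$ for every $k \in \N_+$.

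There is no real obstacle here; the only thing to be careful about is the bookkeeping of the two roles played by $x$ in the corollary's statement versus the two formal parameters $x,y$ of the lemma, and the shift in the exponent $k$ when matching the conclusion. Everything else is the direct translation of the number-theoretic setup $(K,\OO_K,\pp,e)$ to the rational case.
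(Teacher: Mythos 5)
Your proof is correct and is exactly the argument the paper intends: specialise Lemma \ref{lem_allInOne} to $K=\Q$, $\OO_K=\Z$, $\pp=p\Z$, $e=1$, $d=1$, feed it the congruence $x^p\equiv x\pmod p$ from Fermat's little theorem, and shift the exponent index. The only cosmetic remark is that for $k=1$ your re-indexing lands on the lemma's $k=0$ case, which lies outside its stated range $k\in\N_+$; but that case is literally Fermat's congruence itself (and the lemma's own induction starts at $k=0$ anyway), so nothing is missing.
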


\section{The case of order 2}

This case is a direct generalization of \cite{Fibonacci}.  We will consider only sequences $\mathbf U$ defined by linear recurrence of order 2. This implies that the characteristic polynomial of $ \mathbf U$ $C_{\mathbf U}(x) = x^2-r_1x-r_2$ also has degree 2. 

In this section we will denote the roots of $C_\mathbf{U}$ by $\alpha=\frac{r_1+\sqrt{\Delta}}{2}$ and $\beta=\frac{r_1-\sqrt\Delta}{2}$. We assume for simplicity that the roots are different $\alpha \not= \beta$. By Vieta's formulas we have $r_2 = \alpha\beta$ and $r_1 = \alpha+\beta$. Finally we notice that
\begin{align}
    U_n = l_1\alpha^n + l_2\beta^n \quad \text{for $n\in\N$}, 
\end{align}
where $l_1 = \frac{U_2 -  U_1\beta}{\alpha(\alpha-\beta)}$ and $l_2 = \frac{- U_2 +  U_1\alpha}{\beta(\alpha-\beta)}$.

\vskip 0.3cm
There are two cases:
\begin{enumerate}
    \item $\sqrt{\Delta_{\mathbf U}}$ is not an integer and $C_{\mathbf U}$ is irreducible over $\Z$ or
    \item $\Delta_{\mathbf U}$ is a square of an integer and $C_{\mathbf U}$ is a product of two linear factors.
\end{enumerate}
These will yield two different results.

\subsection{$\Delta_{\mathbf U}$ is not a square of an integer}
For a prime number $p$ we denote by $\F_{p}$ the field of integers modulo $p$. If additionally $p$ is odd, then by $\Li{a}{p}$ we mean the Legendre symbol.

Let us consider a prime $p$ such that $\Li{\Delta_{\mathbf U}}{p}= -1$. Notice that the extension $\F_p(\alpha, \beta)$ contains both $\alpha$ and $\beta$. Therefore
\[
    \sqrt{\Delta_{\mathbf U}}^p \equiv
    \Delta_{\mathbf U}^{\frac{p-1}{2}} \cdot \sqrt{\Delta_{\mathbf U}} \equiv 
    \Li{\Delta_{\mathbf U}}{p} \cdot \sqrt{\Delta_{\mathbf U}} \equiv
    -\sqrt{\Delta_{\mathbf U}} \pmod p
\]
Thus, for any integers $A,B$ we have
\[
    \left(A+B\sqrt{\Delta_{\mathbf U}}\right)^p \equiv
    A^p + B^p\sqrt{\Delta_{\mathbf U}}^p \equiv
    A - B\sqrt{\Delta_{\mathbf U}}
    \pmod{p}
\]

Consequentially, the transformation $x \mapsto x^p$ permutes the roots modulo $p$, that is $\alpha^p\equiv\beta\pmod p$ and $\beta^p \equiv \alpha\pmod p$.

\begin{theorem}\label{TheoremNotSquare}
    The following conditions are equivalent:
    \begin{enumerate}
        \itemsep-0.2em
        \item the sequence ${\mathbf U}$ almost satisfies the Dold condition, 
        \item $l_1=l_2$, 
        \item the sequence $2r_2\rad(\Delta_\mathbf{U}){\mathbf U}$ satisfies the Dold condition. 
    \end{enumerate}
\end{theorem}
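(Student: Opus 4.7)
The plan is to show $(3)\Rightarrow(1)\Rightarrow(2)\Rightarrow(3)$, the first arrow being immediate from the definition of ``almost satisfies the Dold condition''.

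As a preparatory step I would use the explicit formulas for $l_1,l_2$ to derive
\[
    l_1+l_2=U_0, \qquad (l_1-l_2)\sqrt{\Delta_{\mathbf U}}=2U_1-r_1U_0=:c_0,
\]
together with the integer decomposition
\[
    2U_n=U_0\,V_n+c_0\,F_n, \qquad V_n:=\alpha^n+\beta^n,\ \ F_n:=\frac{\alpha^n-\beta^n}{\alpha-\beta}.
\]
Thus condition~(2) is equivalent to $c_0=0$, in which case $U_n=l\,V_n$ for $l:=l_1=l_2\in\Q$. This decomposition is the workhorse of both remaining implications.

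For $(2)\Rightarrow(3)$: write $l=a/b$ in lowest terms. Integrality of $U_1=lr_1$ and $U_2=l(r_1^2+2r_2)$ forces $b\mid r_1$ and hence $b\mid 2r_2$, so $b$ divides $c:=2r_2\rad(\Delta_{\mathbf U})$ and $cl\in\Z$. Consequently $c\mathbf U=(cl)\mathbf V$ is an integer multiple of the Lucas companion sequence $\mathbf V$, and it suffices to check that $\mathbf V$ itself satisfies the Dold condition. For split or inert $p$ this is done by promoting $\alpha^p\equiv\sigma(\alpha)\pmod{\mathfrak p}$ (with $\sigma\in\gal(K/\Q)$ the Frobenius) to the higher modulus $\mathfrak p^k$ via Lemma~\ref{lem_allInOne}, and for ramified $p$ it follows from the classical Gauss congruences for $\operatorname{tr}(M^n)$, where $M$ is the integer companion matrix of $C_{\mathbf U}$. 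Since Dold is preserved under integer scaling, this closes the implication.

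For $(1)\Rightarrow(2)$ I argue the contrapositive. Assume $c_0\neq 0$; given any $c'\in\N_+$, quadratic reciprocity together with Dirichlet's theorem produces a prime $p$ with $\Li{\Delta_{\mathbf U}}{p}=-1$ and $p\nmid 2c'c_0\Delta_{\mathbf U}$. For such an inert $p$, the Frobenius swap $\alpha^p\equiv\beta$, $\beta^p\equiv\alpha\pmod{\mathfrak p}$ together with the fact that $\sqrt{\Delta_{\mathbf U}}$ is a unit modulo $\mathfrak p$ yields $V_p\equiv r_1$ and $F_p\equiv -1\pmod p$. Substituting into $2U_p=U_0V_p+c_0F_p$ gives $2(U_p-U_1)\equiv -2c_0\pmod p$, and dividing by $2$ (permissible as $p$ is odd) yields $c'(U_p-U_1)\equiv -c'c_0\not\equiv 0\pmod p$. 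Hence $c'\mathbf U$ fails the Dold condition at $n=p$ for every $c'\in\N_+$, as required.

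The main obstacle is the direction $(1)\Rightarrow(2)$: the asymmetric contribution $c_0F_n$ must obstruct the Dold condition for \emph{every} rescaling, and this is delicate because a badly chosen $c'$ might accidentally kill the obstruction at any fixed prime. The three key ingredients that make it work are the decomposition $2U_n=U_0V_n+c_0F_n$, which cleanly isolates the antisymmetric part of $\mathbf U$; the Frobenius-induced identity $F_p\equiv -1\pmod p$ at inert primes; and the infinitude of such primes, which allows $p$ to always be chosen larger than $|c'c_0|$.
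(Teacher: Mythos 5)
Your argument is correct in substance and follows the same overall architecture as the paper's proof ($(3)\Rightarrow(1)$ trivial, $(1)\Rightarrow(2)$ via inert primes and the Frobenius swap, $(2)\Rightarrow(3)$ by reducing to the trace sequence $V_n=\alpha^n+\beta^n$), but two of your ingredients differ enough to be worth comparing. First, you package the computation in the Lucas-style decomposition $2U_n=U_0V_n+c_0F_n$ rather than working directly with $l_1$ and $l_2$; this is tidy, but note that $U_0=l_1+l_2$ and $c_0=(l_1-l_2)\sqrt{\Delta_{\mathbf U}}$ are a priori only rational with denominator dividing $r_2$ (the identity $r_2U_0=U_2-r_1U_1$, which the paper uses, shows exactly this), so in the $(1)\Rightarrow(2)$ step you should additionally demand $p\nmid r_2$, or clear denominators and work with $2r_2U_n=(r_2U_0)V_n+(r_2c_0)F_n$, before reading off $2(U_p-U_1)\equiv-2c_0\pmod p$; this excludes only finitely many primes, and the paper makes the analogous restriction by requiring $p$ not to divide the denominators of $l_1$ and $l_2$. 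Second, at ramified primes you invoke the classical Gauss congruence for $\operatorname{tr}(M^n)$, whereas the paper proves the needed congruence by hand and in doing so loses a factor of $p$ --- which is precisely why the factor $\rad(\Delta_{\mathbf U})$ appears in condition (3) at all. Your route, granting that classical result (it is genuine, and also follows from the paper's later Lemma \ref{lem_lte} applied to $\beta_i=\alpha_i^s$), actually yields the stronger conclusion that $2r_2\mathbf U$ already satisfies the Dold condition, which of course implies (3). The remaining steps --- the lowest-terms argument giving $2r_2l\in\Z$, and the use of infinitely many primes with $\Li{\Delta_{\mathbf U}}{p}=-1$ --- match the paper's proof.
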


\begin{proof}

    $(1\Rightarrow 2)$

    If the sequence ${\mathbf U}$ is almost realizable, then $p|U_p-U_1$ for all but finitely many prime numbers $p$. We take a prime number $p$ such that $p|U_p-U_1$, $\left(\frac{\Delta}{p}\right)=-1$ and $p$ does not divide denominators of $l_1$ and $l_2$. There are infinitely many such primes. Notice that:
    \begin{equation}
      \begin{split}
            0 &\equiv
            U_p - U_1 \equiv
            (l_1\alpha^p + l_2\beta^p) - (l_1\alpha + l_2\beta) \\ 
            &\equiv 
            (l_1\beta + l_2\alpha) - (l_1\alpha + l_2\beta) \equiv
            (l_1-l_2)(\beta - \alpha) \pmod p
      \end{split}
      \end{equation}
    Therefore, $\alpha \equiv \beta \pmod p$ or $l_1 \equiv l_2 \pmod p$. This property holds for infinitely many prime numbers $p$. If $\alpha \equiv \beta \pmod p$ for infinitely many numbers $p$, then  $\alpha = \beta$, but then the quadratic equation has only one solution, hence $\Delta_{\mathbf U} = 0$, which contradicts the assumption that $\Delta_{\mathbf U}$ is not a square of an integer. Therefore, $l_1 \equiv l_2 \pmod p$ must be true for infinitely many prime numbers $p$, which implies that $l_1=l_2$.

    \vskip 0.5cm
    \noindent $(2\Rightarrow3)$

    The coefficients $l_1 = l_2$ may not be integers. To be able to analyze them modulo prime $p$ we must first make sure that $p$ does not divide their denominators. Fortunately, one can notice that $2r_2l_1$ is an integer because
    \begin{align}
        2r_2l_1 = r_2(l_1+l_2) = r_2U_0 = U_2 - r_1U_1 \in \Z ,
    \end{align}
    where the second equality comes from the exact formula and the third one from the recurrence relation.

    We will show that the sequence $\rad(\Delta_\mathbf{U})\mathbf{V}$ satisfies the Dold condition where ${V}_n := \alpha^n+\beta^n$. This will imply that $2r_2\rad(\Delta_\mathbf{U})\mathbf{U}$ also satisfies this condition as this sequence is an integer multiple of $\rad(\Delta_\mathbf{U})\mathbf{V}$. 

    To show that $\rad(\Delta_\mathbf{U})\mathbf{V}$ satisfies the condition we will prove that for every prime $p$ we have
    \begin{align}\label{the_not_sq_e1}
        \begin{split}
        \rad(\Delta_\mathbf{U})\left(\alpha^{sp^k}+\beta^{sp^k}\right) \equiv 
        \rad(\Delta_\mathbf{U})\left(\alpha^{sp^{k-1}}+\beta^{sp^k}\right) 
        \pmod {p^k} 
        \\ \text{for all $k, s\in \N_+$}
        \end{split}
    \end{align}

For any prime $p$ the expression $\Delta_\mathbf{U}^\frac{p-1}{2} \pmod p$ (and consequently $\Li{\Delta_\mathbf{U}}{p}$) can only take on one of three values. 
\begin{enumerate}
    \item $\Delta_\mathbf{U}^\frac{p-1}{2} \equiv -1 \pmod p$ and $\Delta_\mathbf{U}$ is not a quadratic residue. We already considered this case and saw that
    % \begin{align}
    %     \sqrt{\Delta_{\mathbf U}}^p \equiv
    %     \Delta_{\mathbf U}^{\frac{p-1}{2}} \cdot \sqrt{\Delta_{\mathbf U}}
    %     \pmod p.
    % \end{align}
    % Thus, for any integers $a,b$ we have
    % \begin{align}
    %      \left(a+b\sqrt{\Delta_{\mathbf U}}\right)^p \equiv
    %      a^p + b^p\sqrt{\Delta_{\mathbf U}}^p \equiv
    %      a - b\sqrt{\Delta_{\mathbf U}}
    %      \pmod{p}
    % \end{align}
    % Consequentially, the transformation $x \mapsto x^p$ permutates the roots modulo $p$, that is
    $\alpha^p\equiv\beta\pmod p$ and $\beta^p \equiv \alpha\pmod p$. By Lemma we have \ref{lem_allInOne}  $\alpha^{p^k}\equiv\beta^{p^{k-1}}\pmod {p^k}$ and $\beta^{p^k}\equiv\alpha^{p^{k-1}}\pmod {p^k}$. Taking the last two equivalences to the power of $s$ and adding them together we get (\ref{the_not_sq_e1}).

    \item $\Delta_\mathbf{U}^\frac{p-1}{2} \equiv 1 \pmod p$ and $\Delta_\mathbf{U}$ is a quadratic residue. This case follows similarly. We have
     \begin{align}
         \left(A+B\sqrt{\Delta_{\mathbf U}}\right)^p \equiv
         A^p + B^p\cdot \Delta_{\mathbf U}^{\frac{p-1}{2}} \cdot \sqrt{\Delta_{\mathbf U}} \equiv
         A + B\sqrt{\Delta_{\mathbf U}}
         \pmod{p}.
    \end{align}
    Therefore $\alpha^p\equiv\alpha\pmod p$ and $\beta^p \equiv \beta\pmod p$.  By Lemma \ref{lem_allInOne}  $\alpha^{p^k}\equiv\alpha^{p^{k-1}}\pmod {p^k}$ and $\beta^{p^k}\equiv\beta^{p^{k-1}}\pmod {p^k}$. Again, taking both to the power of $s$ and adding them we get $(\ref{the_not_sq_e1})$.

    \item $\Delta_\mathbf{U}^\frac{p-1}{2} \equiv 0 \pmod p$ and $p|\Delta_\mathbf{U}$. We have
     \begin{align}
         \left(A+B\sqrt{\Delta_{\mathbf U}}\right)^p \equiv
         A^p + B^p\sqrt{\Delta_{\mathbf U}}^p \equiv
         A
         \pmod{p}.
    \end{align}
    Hence there must exist $A\in\Z$ such that $\alpha^p\equiv\beta^p\equiv A\pmod p$. Again by Lemma \ref{lem_allInOne} we have $\alpha^{p^k}\equiv\beta^{p^k}\equiv A^{p^{k-1}}\pmod {p^k}$. We can see that
    \begin{align}
        \alpha^{p^ks} \equiv  
        \left(   \alpha^{p^k} \right)^s  \equiv 
         \left( A^{p^{k-1}}\right)^s  \equiv
         \left( A^{p^{k-2}}\right)^s  \equiv
         \\\equiv
         \left(\alpha^{p^{k-1}}\right)^s  \equiv
          \alpha^{p^{k-1}s} \pmod {p^{k-1}}
    \end{align}
    for any $s\in\N_+$. The second equivalence can be proven by applying Lemma \ref{lem_allInOne} to $A^p\equiv A\pmod p$ (which is true by the Fermat's little theorem). The similar argument works to show that  $\beta^{p^k} \equiv   \beta^{p^{k-1}s} \pmod {p^{k-1}}$. Adding these results and multiplying them by $p$ we get  $ p\left(\alpha^{sp^k}+\beta^{sp^k}\right) \equiv  p\left(\alpha^{sp^{k-1}}+\beta^{sp^k}\right) \pmod {p^k}$. Because in this case $p|\rad(\Delta_\mathbf{U})$ this implies (\ref{the_not_sq_e1}). 

\end{enumerate}

% If a prime $p$ falls into the first or second case then clearly  $\alpha^{p^k}+\beta^{p^k}\equiv\alpha^{p^{k-1}}+\beta^{p^{k-1}}\pmod {p^k}$ for $k\in\N_+$.

% The third case requires more work.

    % by the recurrence formula $U_2 - r_1U_1= r_2U_0$ and by the exact formula $r_2U_0 = r_2(l_1+l_2) = 2r_2l_1$ so $2r_2l_1$ is an integer.

    % Having shown that $\alpha^p\equiv \beta \pmod p$, using Lemma \ref{lem_allInOne} we get $\alpha^{p^k} \equiv \beta^{p^{k-1}} \pmod{p^k}$ and analogously $\beta^{p^k} \equiv \alpha^{p^{k-1}} \pmod{p^k}$. 
    % By the recurrence formula $U_2 - r_1U_1= r_2U_0$ and by the exact formula $r_2U_0 = r_2(l_1+l_2) = 2r_2l_1$ so $2r_2l_1$ is an integer.
    
    % We know that $U_n = l_1(\alpha^n + \beta^n)$ since $l_1=l_2$. It is easy to notice that
    % \[
    %     2r_2U_{sp^k}  \equiv
    %     (2r_2l_1)(\alpha^{sp^k} + \beta^{sp^k}) \equiv
    %     (2r_2l_1)(\beta^{sp^{k-1}} + \alpha^{sp^{k-1}}) \equiv
    %     2r_2U_{sp^{k-1}} \pmod{p^k}.
    % \]

    \noindent $(3\Rightarrow1)$ Trivial.
\end{proof}

\begin{example}
    The sequence $\mathbf{U}$ defined by recursion as $U_{n+2} = 12U_{n+1}+3U_n$ for $n\in \N_+$ and $U_1 = 2$, $U_2 = 25$ has the exact formula 
    \begin{align}
        U_n = \frac{\left(6+\sqrt{39}\right)^n+\left(6-\sqrt{39}\right)^n}{6}
    \end{align}
    By the theorem above $\mathbf{U}$ almost satisfies the Dold condition and $\fail(\mathbf{U})$ divides $2r_2\rad(\Delta_\mathbf{U}) = 2\cdot 3\cdot 78 = 2^2\cdot 3^2\cdot13$. One can verify that $U_3 = 306$. We now notice that $2\not|\ U_2-U_1 = 25-2 = 23$ and $3\not|\ U_3-U_1 = 306-2 = 304$ which implies that $2\cdot 3=6\ |\ \fail(\mathbf{U})$. One can also calculate that $U_{13} \equiv 2 \pmod 13$ and that $13|U_{13}-U_1$. We cannot therefore verify using this simple argument whether $13|\fail(\mathbf{U})$.   
\end{example}

\subsection{$\Delta_{\mathbf U}$ is a square of an integer}

In this case the roots $\alpha$ and $\beta$ are integers. 

\begin{theorem} \label{theor_rad}
    The sequence $r_2\rad(\Delta_{\mathbf U})\mathbf U$ satisfies the Dold condition
\end{theorem}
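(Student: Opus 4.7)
The plan is to apply Lemma \ref{lem_betterDef} and reduce the claim to showing, for every prime $p$ and every $n,e\in\N_+$ with $p\nmid n$, that
\begin{align*}
p^{e} \,\Big|\, r_{2}\rad(\Delta_{\mathbf U})\bigl(U_{p^{e}n}-U_{p^{e-1}n}\bigr).
\end{align*}
Since $\Delta_{\mathbf U}$ is a perfect square, both roots $\alpha,\beta$ of $C_{\mathbf U}$ lie in $\Z$, and the closed form $U_{n}=l_{1}\alpha^{n}+l_{2}\beta^{n}$ has very concrete coefficients $l_{1},l_{2}\in\Q$. The only serious obstruction is that $l_{1},l_{2}$ need not be integers, so the strategy is to absorb their denominators into the factor $c:=r_{2}\rad(\Delta_{\mathbf U})$ and then apply Corollary \ref{c_fermat} to $\alpha$ and $\beta$ term by term.

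The core step I would isolate as a claim is that $cl_{1},\,cl_{2}\in\Z$. The point is that $\alpha-\beta=\sqrt{\Delta_{\mathbf U}}$ is itself an integer, and $\rad(\Delta_{\mathbf U})=\rad\bigl(\sqrt{\Delta_{\mathbf U}}^{\,2}\bigr)=\rad\bigl(\sqrt{\Delta_{\mathbf U}}\bigr)$ divides $\sqrt{\Delta_{\mathbf U}}$, so in particular $\rad(\Delta_{\mathbf U})\mid(\alpha-\beta)$. Combining this with $r_{2}=\alpha\beta$ and the stated formulas for $l_{1},l_{2}$ gives
\begin{align*}
r_{2}\rad(\Delta_{\mathbf U})\,l_{1} \;=\; \beta(U_{2}-U_{1}\beta)\cdot\frac{\rad(\Delta_{\mathbf U})}{\alpha-\beta}\in\Z,
\end{align*}
and an analogous computation for $l_{2}$. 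This is the place where the exact shape of the prefactor $r_{2}\rad(\Delta_{\mathbf U})$ is forced by the argument.

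With integrality in hand, the conclusion is immediate from Corollary \ref{c_fermat} applied to $x=\alpha^{n}\in\Z$ with $k=e$, giving $\alpha^{p^{e}n}\equiv\alpha^{p^{e-1}n}\pmod{p^{e}}$, and similarly for $\beta$. Then
\begin{align*}
c\bigl(U_{p^{e}n}-U_{p^{e-1}n}\bigr)\;=\;(cl_{1})\bigl(\alpha^{p^{e}n}-\alpha^{p^{e-1}n}\bigr)+(cl_{2})\bigl(\beta^{p^{e}n}-\beta^{p^{e-1}n}\bigr),
\end{align*}
an integer linear combination of two quantities divisible by $p^{e}$, hence divisible by $p^{e}$ as required.

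The only substantive step is the integrality claim $cl_{1},cl_{2}\in\Z$, which rests on the elementary divisibility $\rad(m^{2})\mid m$. Everything else is a direct application of Fermat's little theorem via Corollary \ref{c_fermat}; crucially, no Legendre-symbol case analysis or Galois-theoretic swap $\alpha\leftrightarrow\beta$ is needed here, which is why the answer in the split case is strictly cleaner than in Theorem \ref{TheoremNotSquare} and does not require the extra factor of $2$.
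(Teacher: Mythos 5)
Your reduction to Lemma \ref{lem_betterDef} and the termwise use of Corollary \ref{c_fermat} are fine, but the proof collapses at the one step you yourself identify as substantive: the claim that $r_2\rad(\Delta_{\mathbf U})\,l_1$ and $r_2\rad(\Delta_{\mathbf U})\,l_2$ are integers is false in general, and the divisibility you invoke points in the wrong direction. It is true that $\rad(\Delta_{\mathbf U})=\rad\bigl(\sqrt{\Delta_{\mathbf U}}\bigr)$ divides $\sqrt{\Delta_{\mathbf U}}=\alpha-\beta$, but that makes $\frac{\rad(\Delta_{\mathbf U})}{\alpha-\beta}$ the \emph{reciprocal} of an integer, not an integer; to clear the denominator of $r_2l_1=\frac{\beta(U_2-U_1\beta)}{\alpha-\beta}$ you would need the opposite divisibility $(\alpha-\beta)\mid\rad(\Delta_{\mathbf U})\cdot\beta(U_2-U_1\beta)$, which fails whenever $\sqrt{\Delta_{\mathbf U}}$ is not squarefree. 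A concrete counterexample sits in the paper's own family \eqref{dd} with $\delta=4$: there $\alpha=5$, $\beta=1$, $l_1=\tfrac34$, $l_2=\tfrac14$, $r_2=-(\delta+1)=-5$ and $\rad(\Delta_{\mathbf U})=\rad(16)=2$, so $r_2\rad(\Delta_{\mathbf U})l_1=-\tfrac{15}{2}\notin\Z$.

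The theorem is nevertheless true, because the Dold condition does not require each coefficient separately to be integral. Your argument is sound for primes $p\nmid\Delta_{\mathbf U}$ (then $p$ does not divide the denominators of $r_2l_1$, $r_2l_2$ and Corollary \ref{c_fermat} applies termwise, without even needing the factor $\rad(\Delta_{\mathbf U})$). For $p\mid\Delta_{\mathbf U}$ the paper instead sets $n=\nu_p(\alpha-\beta)$, works with $t_i=r_2p^nl_i$ (which do have $p$-integral denominators), and exploits two facts your approach discards: $\alpha\equiv\beta\pmod{p^n}$, which via Lemma \ref{lem_allInOne} lets one replace $\alpha^{sp^k}$ by $\beta^{sp^k}$ to high $p$-adic accuracy, and the integrality of the \emph{sum} $t_1+t_2=p^nr_2U_0$. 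This yields $pr_2(U_{sp^k}-U_{sp^{k-1}})\equiv 0\pmod{p^k}$, and the single factor of $p$ supplied by $\rad(\Delta_{\mathbf U})$ then suffices. You would need to add an argument of this kind for the ramified primes to close the gap.
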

\begin{proof}
    By Lemma \ref{lem_betterDef} it is enough to show that $r_2\rad(\Delta_{\mathbf U}) U_{sp^k} \equiv r_2\rad(\Delta_{\mathbf U}) U_{sp^{k-1}} \pmod{p^k}$ for every prime $p$, and $s,k\in\N_+$.
    
    If $\Delta_{\mathbf U}$ is not divisible by $p$, then the denominator of $r_2l_1 = \alpha\beta\cdot \frac{U_2 - U_1\beta}{\alpha(\alpha-\beta)} = \frac{\beta(U_2 - U_1\beta)}{(\alpha-\beta)}$ is not divisible by $p$. The same happens in the case of $r_2l_2$. We can write the following
    \begin{align}
        & r_2U_{sp^k} - r_2U_{sp^{k-1}} \\&\equiv 
        ((r_2l_1)\alpha^{sp^k}+(r_2l_2)\beta^{sp^k}) -  ((r_2l_1)\alpha^{sp^{k-1}}+(r_2l_2)\beta^{sp^{k-1}})  \\&\equiv 
        (r_2l_1)(\alpha^{sp^k} - \alpha^{sp^{k-1}}) +(r_2l_2)(\beta^{sp^k} -\beta^{sp^{k-1}}) \\&\equiv 
        (r_2l_1)\cdot 0 + (r_2l_2)\cdot 0  \\&\equiv 
        0
        \pmod {p^k}
    \end{align}

    The penultimate equivalence is true because  $A^{p^k}\equiv A^{p^{k-1}} \pmod {p^k}$ for every integer $A$ and prime $p$. This shows that $r_2U_{sp^k} - r_2U_{sp^{k-1}} \equiv 0 \pmod{p^k}$, which implies that $r_2\rad(\Delta_{\mathbf U}) U_{sp^k} \equiv r_2\rad(\Delta_{\mathbf U}) U_{sp^{k-1}} \pmod{p^k}$ for every $s,k\in \N_+$ and prime $p\not|\ \Delta_\mathbf{U}$.

    The problem arises when $p$ is a divisor of $\Delta_{\mathbf U}$. Let $n:=\nu_p(\sqrt{\Delta_{\mathbf U}}) = \nu_p(\alpha-\beta) > 0$ be the $p$-adic valuation of $\sqrt{\Delta_{\mathbf U}}$. The denominators of $r_2p^nl_1$ and $r_2p^nl_2$ are therefore not divisible by $p$. For simplicity we will write $t_1 = r_2p^nl_1$ and $t_2 = r_2p^nl_2$. 
    % The recurrence formula $U_2 = r_1U_1 + r_2U_0$ implies that $r_2U_0$ is an integer.
    
    We can see that $\alpha - \beta = \sqrt{\Delta_{\mathbf U}} \equiv 0 \pmod{p^n}$, so $\alpha \equiv \beta \pmod{p^n}$. Using Lemma \ref{lem_allInOne} we conclude that
    \begin{align} \label{nn1}
        \alpha^{p^{k}} \equiv \beta^{p^k} \pmod{p^{k+n}}.
    \end{align}
    Also by Lemma we have \ref{lem_allInOne} $\alpha^{p^{k-1}} \equiv \beta^{p^{k-1}} \pmod{p^{k+n-1}}$. Therefore
    \begin{align}\label{nn2}
        \alpha^{p^{k-1}} \equiv \beta^{p^{k-1}} +zp^{n+k-1} \pmod{p^{k+n}}
        \quad\quad\text{for some $z\in\Z$}.
    \end{align}
     
    Consider the following difference.
    \begin{equation}
        \begin{split}
        &r_2p^nU_{sp^k} - r_2p^nU_{sp^{k-1}}  
        \\ &\equiv (t_1\alpha^{sp^k}+t_2\beta^{sp^k}) -  (t_1\alpha^{sp^{k-1}}+t_2\beta^{sp^{k-1}}) 
        \\ &\equiv
        t_1(\alpha^{sp^k} - \alpha^{sp^{k-1}}) +t_2(\beta^{sp^k} -\beta^{sp^{k-1}})
        \\ &\!\!\!\!\overset{(\ref{nn1}),(\ref{nn2})}{\equiv}
        t_1(\beta^{sp^k} - (\beta^{sp^{k-1}}+p^{n+k-1}z)) +t_2(\beta^{sp^k} -\beta^{sp^{k-1}}) 
        \\ &\equiv
        (t_1+t_2)(\beta^{sp^k} - \beta^{sp^{k-1}}) -t_1(p^{n+k-1}z)
        \\ &\equiv
        p^n(r_2U_0)(\beta^{sp^k} - \beta^{sp^{k-1}}) -t_1(p^{n+k-1}z)
        \\ &\equiv
        -t_1(p^{n+k-1}z)  \pmod{p^{k+n}}
    \end{split}
    \end{equation}

 % \begin{align}
 %        &r_2p^nU_{sp^k} - r_2p^nU_{sp^{k-1}}  
 %        \\ &\equiv r_2(p^nl_1\alpha^{sp^k}+p^nl_2\beta^{sp^k}) -  r_2(p^nl_1\alpha^{sp^{k-1}}+p^nl_2\beta^{sp^{k-1}}) 
 %        \\ &\equiv
 %        r_2p^nl_1(\alpha^{sp^k} - \alpha^{sp^{k-1}}) +r_2p^nl_2(\beta^{sp^k} -\beta^{sp^{k-1}}) \equiv
 %        \\ &\!\!\!\!\overset{(\ref{nn1}),(\ref{nn2})}{\equiv}
 %        r_2p^nl_1(\beta^{sp^k} - (\beta^{sp^{k-1}}+p^{n+k-1}z)) +r_2p^nl_2(\beta^{sp^k} -\beta^{sp^{k-1}}) \equiv  
 %        \\ &\equiv
 %        r_2(p^nl_1+p^nl_2)(\beta^{sp^k} - \beta^{sp^{k-1}}) -r_2(p^{n}l_1)(p^{n+k-1}z) \equiv  
 %        \\ &\equiv
 %        p^n(r_2U_0)(\beta^{sp^k} - \beta^{sp^{k-1}}) -r_2(p^{n}l_1)(p^{n+k-1}z) \equiv  
 %        \\ &\equiv
 %        -r_2(p^{n}l_1)(p^{n+k-1}z)  \pmod{p^{k+n}}
 %    \end{align}
    % We get (3) with the help of Lemma \ref{allInOne}: $\alpha \equiv \beta \pmod{p^{d}}$   so $\alpha^{p^k}\equiv \beta^{p^k}\pmod{p^{d+k}}$ and $\alpha^{p^{k-1}}\equiv \beta^{p^{k-1}}\pmod{p^{d+k-1}}$ which means that $\alpha^{p^{k-1}}\equiv \beta^{p^{k-1}} + p^{d+k-1}z\pmod{p^{d+k}}$ for some $z$; (5) from teh fact that $U_0 = l_1+l_2$; 
    The last equivalence is true because by Lemma \ref{lem_allInOne} we have $\beta^{p^{k}} \equiv \beta^{p^{k-1}} \pmod{p^k}$ and  $r_2U_0$ is an integer, so $p^{k+n}\mid p^n(r_2U_0)(\beta^{p^{k}} -\beta^{p^{k-1}})$.

    The result of above calculations is
    $$r_2p^nU_{sp^k} - r_2p^nU_{sp^{k-1}} \equiv -t_1(p^{n+k-1}z)  \pmod{p^{k+n}}.$$
    First dividing by $p^n$ and then multiplying by $p$ yields
    \begin{align}
        r_2U_{sp^k} - r_2U_{sp^{k-1}} \equiv -(t_1)(p^{k-1}z) \pmod{p^{k}}, \\
         pr_2U_{sp^k} - pr_2U_{sp^{k-1}} \equiv -(t_1)(p^{k}z) \equiv 0 \pmod{p^{k}}.
    \end{align}

    Hence, for each  prime $p$ the equivalence $r_2\rad(\Delta_{\mathbf U})U_{sp^k} \equiv r_2\rad(\Delta_{\mathbf U})U_{sp^{k-1}}\pmod{p^k}$ is true. This means that $r_2\rad(\Delta_{\mathbf U}){\mathbf U}$ satisfies the Dold condition.
\end{proof}

The above theorem proves two interesting facts. The first one is that the sequence ${\mathbf U}$ is always almost realizable no matter the choice of $l_1$ and $l_2$ (equivalently: no matter the choice of $U_1$ and $U_2$) which is not true in the former case. The second fact is that it gives a bound for the repairing factor of ${\mathbf U}$ and in fact this is the best possible bound in terms of $\Delta_{{\mathbf U}}$. More formally:

\begin{theorem}
    For every $\delta\in\N_+$ there exists a sequence ${\mathbf U}$ given by linear recursion of order 2 with $\sqrt{\Delta_{\mathbf U}} = \delta$ such that  $\rad(\Delta_{\mathbf U})| \fail({\mathbf U})$.
\end{theorem}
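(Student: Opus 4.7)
The plan is to exhibit an explicit family of sequences realizing the bound. For fixed $\delta\in\N_+$ (the case $\delta=1$ being trivial since $\rad(1)=1$), I take $\alpha = 1$ and $\beta = 1 - \delta$, so that $\alpha - \beta = \delta$ and hence $\sqrt{\Delta_{\mathbf U}} = \delta$. With $r_1 = \alpha + \beta = 2 - \delta$ and $r_2 = -\alpha\beta = \delta - 1$, the recurrence becomes $U_n = (2-\delta)U_{n-1} + (\delta-1)U_{n-2}$, and the initial conditions $U_1 = 1$, $U_2 = 2-\delta$ yield the closed form
\[
U_n = \frac{1 - (1-\delta)^n}{\delta},
\]
which is $\Z$-valued because $\alpha - \beta = \delta$ divides $\alpha^n - \beta^n = 1 - (1-\delta)^n$.

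The key step is to show that for every prime $p$ dividing $\delta$ we have $U_p - U_1 \not\equiv 0 \pmod p$. Expanding via the binomial theorem,
\[
U_p = \frac{1 - (1-\delta)^p}{\delta} = p - \binom{p}{2}\delta + \binom{p}{3}\delta^2 - \cdots + (-1)^{p-1}\delta^{p-1}.
\]
For $2 \leqslant k \leqslant p-1$ the coefficient $\binom{p}{k}$ contributes a factor of $p$, while the final term $\pm\delta^{p-1}$ is divisible by $p$ since $p \mid \delta$ and $p-1 \geqslant 1$. Hence $U_p \equiv 0 \pmod p$, and so $U_p - U_1 \equiv -1 \pmod p$.

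Finally, since $U_p - U_1 = \sum_{d \mid p}\mu(p/d)U_d$, the Dold condition applied to $\fail(\mathbf U)\cdot \mathbf U$ at $n=p$ forces $p \mid \fail(\mathbf U)\cdot(U_p - U_1)$, and combined with $p\nmid U_p - U_1$ this gives $p \mid \fail(\mathbf U)$. Ranging over all primes $p \mid \delta$ and using $\rad(\Delta_{\mathbf U}) = \rad(\delta^2) = \rad(\delta)$ yields the desired divisibility. The only step that requires any care is the uniform binomial computation above (and handling $p=2$, $\delta=1$ as degenerate cases); everything else is routine.
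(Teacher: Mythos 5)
Your proof is correct and follows essentially the same strategy as the paper: exhibit an explicit reducible order-2 sequence with $1$ as a root and the other root shifted by $\delta$, then use a binomial expansion to show $U_p-U_1\equiv\pm 1\pmod p$ for every $p\mid\delta$, forcing $p\mid\fail(\mathbf U)$. The only difference is the choice of witness (you take roots $1$ and $1-\delta$ giving $U_n=\frac{1-(1-\delta)^n}{\delta}$, the paper takes $1$ and $\delta+1$ giving $U_n=\frac{1}{\delta}+\frac{\delta-1}{\delta}(\delta+1)^n$), which is immaterial.
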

\begin{proof}
    The sequence satisfying the statement of the theorem is
    \begin{align}
        U_n = (\delta+2)U_{n-1} - (\delta+1)U_{n-2} \quad\text{for $n\geqslant 2$ \ and\ \ $U_0=1$, $U_1=\delta$} \label{dd}
    \end{align}
    Its characteristic polynomial and closed form are as follows:
    \begin{align}
        C_U &= (x-1)(x-1-\delta)\\
        U_n &= \frac{1}{\delta} + \frac{\delta-1}{\delta}(\delta+1)^n
    \end{align}

    Notice that $\sqrt{\Delta_{\mathbf U}} = (\delta+1) - (1) = \delta$. 
    
    We will show that $p\nmid U_p-U_1$ for every $p|\delta$. Since the condition $p|U_p-U_1$ is necessary for ${\mathbf U}$ to be realizable we will need to repair it by at least a factor of $p$ so $p|\fail({\mathbf U})$. This is enough to conclude that $\rad(\Delta_{\mathbf U}) = \rad(\delta)|\fail({\mathbf U})$.
    % \By Theorem \ref{theor_rad} $\fail(U)|\rad(\delta)$ therefore $\fail(U) = \rad(\delta) = \rad(\Delta_U)$.
    
    Let $p|\delta$.
    \begin{align}
        U_p - U_1 &=  
        \left( \frac{1}{\delta} + \frac{\delta-1}{\delta}(\delta+1)^p \right) - \left( \frac{1}{\delta} + \frac{\delta-1}{\delta}(\delta+1) \right) \\ 
        &=  \frac{\delta-1}{\delta} \left( (\delta+1)^p - (\delta+1) \right) \\
        &=  \frac{\delta-1}{\delta} \left( \delta^p+1 + p\delta r - (\delta+1) \right) \\
        &=  (\delta-1) \left( \delta^{p-1} + p r - 1 \right)
    \end{align}
    for some $r\in \Z$. Because $p|\delta$, we have
    \begin{align}
        U_p - U_1 \equiv 
         (\delta-1) \left( \delta^{p-1} + p r - 1 \right) 
         \equiv (-1) (-1) \equiv 1 \pmod{p}.
    \end{align}
    Clearly $p\nmid U_p - U_1$ which concludes the proof.

\end{proof}

\subsection{The Dold condition for the sequence  $(U_{n^{2t}})_{n\in\N_+}$}

While not every sequence ${\mathbf U}$ defined by linear recursion of order 2 satisfies the Dold condition, it might be surprising that when we take a subsequence consisting of only indices that are square numbers, then the resulting sequence $(U_{n^2})_{n\in\N_+} := (U_1,  U_4,  U_9, \ldots)$ is always almost realizable no matter the reducibility of the characteristic polynomial.
\begin{theorem}\label{theor_n2}
    The sequence $(U_{n^2})_{n\in\N_+}$ almost satisfies the Dold condition
\end{theorem}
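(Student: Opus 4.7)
By Lemma~\ref{lem_betterDef}, producing a positive integer $c$ such that $(c\,U_{n^2})_{n\in\N_+}$ satisfies the Dold condition is equivalent to showing $p^k\mid c(U_{s^2p^{2k}}-U_{s^2p^{2k-2}})$ for every prime $p$ and $s,k\in\N_+$ with $\gcd(s,p)=1$. Using the closed form $U_n=l_1\alpha^n+l_2\beta^n$, the task reduces to controlling $\alpha^{s^2p^{2k}}-\alpha^{s^2p^{2k-2}}$ (and its $\beta$-analogue) $\pp$-adically in $\OO_K$, where $K=\Q(\sqrt{\Delta_{\mathbf U}})$. I split on whether $\Delta_{\mathbf U}$ is a perfect square. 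If it is, Theorem~\ref{theor_rad} already gives that $\mathbf U$ is almost realizable with repair factor $c_0=r_2\rad(\Delta_{\mathbf U})$; invoking its Dold condition between $s^2p^{2k}$ and $s^2p^{2k-1}$, and between $s^2p^{2k-1}$ and $s^2p^{2k-2}$, yields $p^{2k}\mid c_0(U_{s^2p^{2k}}-U_{s^2p^{2k-1}})$ and $p^{2k-1}\mid c_0(U_{s^2p^{2k-1}}-U_{s^2p^{2k-2}})$, which sum to $p^{2k-1}\mid c_0(U_{s^2p^{2k}}-U_{s^2p^{2k-2}})$, stronger than $p^k$ for every $k\geq 1$.

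If $\Delta_{\mathbf U}$ is not a square, I revisit the three-sub-case Frobenius analysis of the proof of Theorem~\ref{TheoremNotSquare}, carried one extra power further. The uniform observation is that the Frobenius on the residue field at each prime $\pp$ of $\OO_K$ over $p$ has order dividing $2$, so $\alpha^{p^2}\equiv\alpha\pmod{\pp}$ in every sub-case: if $\Li{\Delta_{\mathbf U}}{p}=-1$ then $\alpha^p\equiv\beta$ and thus $\alpha^{p^2}\equiv\beta^p\equiv\alpha$; if $\Li{\Delta_{\mathbf U}}{p}=1$ then $\alpha^p\equiv\alpha$ already; and if $p\mid\Delta_{\mathbf U}$ then $\alpha^p\equiv A\equiv\alpha\pmod{\pp}$ for some $A\in\Z$. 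In the unramified sub-cases, Lemma~\ref{lem_allInOne} lifts $\alpha^{p^2}\equiv\alpha\pmod{\pp}$ to $\alpha^{p^{2k}}\equiv\alpha^{p^{2k-2}}\pmod{\pp^{2k-1}}$, and raising to the $s^2$-th power (coprime to $p$) preserves it. In the ramified sub-case $p\mid\Delta_{\mathbf U}$, where one has the sharper $\alpha^p\equiv A\pmod{p}$ because $\sqrt{\Delta_{\mathbf U}}^p\in p\OO_K$, Lemma~\ref{lem_allInOne} applied with $d=1$ and ramification index $e=2$, combined with Corollary~\ref{c_fermat} for $A$, yields $\alpha^{p^{k+1}}\equiv\alpha^{p^k}\pmod{\pp^{ek}}$, which iterates to collapse the two $n^2$-indices. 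Taking the repair factor $c$ divisible by both $c_0$ and $\Delta_{\mathbf U}$ absorbs the possible $\sqrt{\Delta_{\mathbf U}}$-denominators of $l_1,l_2$ and upgrades the $\pp$-adic congruences to $p^k$-divisibility in $\Z$.

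The hard part is the ramified sub-case at small $k$: the congruence $\alpha^{p^2}\equiv\alpha\pmod{\pp}$ holds only modulo $\pp$, whereas $p\OO_K=\pp^e$ with $e=2$, so Lemma~\ref{lem_allInOne} cannot be applied to a congruence weaker than $\pmod{\pp^e}$. One must reroute through the sharper $\alpha^p\equiv A\pmod{p}$ and track, via the $\Delta_{\mathbf U}$ factor, that the $\pp$-adic deficit coming from the denominators of $l_j$ (bounded by $v_\pp(\sqrt{\Delta_{\mathbf U}}\,)$) is exactly balanced by the $\pp$-adic gain from the Frobenius lifting.
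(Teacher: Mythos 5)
Your argument is correct and follows essentially the same route as the paper's: the non-square case is the same three-way split on $\Delta_{\mathbf U}^{(p-1)/2}\bmod p$ with Lemma~\ref{lem_allInOne} lifting $\alpha^{p^2}\equiv\alpha\pmod{\pp}$ to exponent $2k-1\geqslant k$, the ramified case rerouted through $\alpha^p\equiv A\pmod p$ with the factor $p\mid\Delta_{\mathbf U}$ covering $k=1$, and the denominators of $l_1,l_2$ cleared by $r_2\Delta_{\mathbf U}$. Your only organizational deviation is to dispatch the square-discriminant case separately by telescoping the Dold condition of $r_2\rad(\Delta_{\mathbf U})\mathbf U$ from Theorem~\ref{theor_rad} across two exponent steps, which is exactly the general $(V_{n^t})$ principle the paper records immediately after this theorem, so nothing essentially new is needed there.
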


\begin{proof}
It will be enough to show that 
\begin{align} \label{theor_n2_e1}
    % \raddel\alpha^{{(sp^k)}^2} &\equiv \raddel\alpha^{{(sp^{k-1})}^2}\ \pmod{p^k}\\ \quad
\raddel\left(\alpha^{{(sp^k)}^2} - \alpha^{{(sp^{k-1})}^2}\right)  &\equiv 0 \pmod{p^k} \\
\raddel\left(\beta^{{(sp^k)}^2} - \beta^{{(sp^{k-1})}^2}\right)  &\equiv 0 \pmod{p^k} \label{theor_n2_e2}
% \\ \quad
%      \raddel\beta^{{(sp^k)}^2} &\equiv \raddel\beta^{{(sp^{k-1})}^2} \pmod{p^k}  
    % \quad\quad\text{for all primes $p$ and $s, k \in \N_+$}.
\end{align}
for all primes $p$ and $s, k \in \N_+$ because then 
\begin{align}
    & N\raddel U_{{(sp^k)}^2} - N\raddel{U}_{{(sp^{k-1})}^2} \\ &\equiv
    \raddel\left((Nl_1)\alpha^{{(sp^k)}^2} + (Nl_2)\beta^{{(sp^k)}^2}\right)
    - \raddel\left((Nl_1)\alpha^{{(sp^{k-1})}^2} + (Nl_2)\beta^{{(sp^{k-1})}^2}\right) \\ &\equiv
    (Nl_1)\raddel\left(\alpha^{{(sp^k)}^2} - \alpha^{{(sp^{k-1})}^2}\right)
    + (Nl_1)\raddel\left(\beta^{{(sp^k)}^2} - \beta^{{(sp^{k-1})}^2}\right)
    \\ &\overset{(\ref{theor_n2_e1}), (\ref{theor_n2_e2})}{\equiv} (Nl_1) \cdot 0 + (Nl_2) \cdot 0  
    \\&\equiv 0 \pmod{p^k}
\end{align}
where $N$ is such an integer that $Nl_1$ and $Nl_2$ are both algebraic integers. This shows that the sequence $(N\raddel {U}_{n^2})_{n\in\N_+}$ satisfies the Dold condition. One can see that $N$ can be set to $r_2\Delta_\mathbf{U}$. This gives the bound $\fail(\mathbf{U}) | r_2\Delta_\mathbf{U}\rad(\Delta_\mathbf{U})$.

The only thing left is to show (\ref{theor_n2_e1}). The congruence (\ref{theor_n2_e2}) will follow analogously.

Fix a prime $p$. While proving Theorem \ref{TheoremNotSquare} we saw that if $\Delta_\mathbf{U}^\frac{p-1}{2} \equiv -1 \pmod p$ then $\alpha^{p^2} \equiv \beta^p \equiv \alpha \pmod p$. If  $\Delta_\mathbf{U}^\frac{p-1}{2} \equiv 1 \pmod p$ then we get the same result $\alpha^{p^2} \equiv \alpha^p \equiv \alpha \pmod p$. In these two cases using again Lemma \ref{lem_allInOne}
 we can see that $\alpha^{p^{2+t}} \equiv \alpha^{p^t} \pmod{p^{t+1}}$ for any $t \in \N$. Now rising both sides to the power of $s^2$ and substituting $t = 2k-2$ (notice $k\geqslant 1$) we get $\alpha^{{(p^{k}s)}^2} \equiv \alpha^{{(sp^{k-1})}^2} \pmod{p^{2k-1}}$. This finally implies (\ref{theor_n2_e1}) as $2k-1 \geqslant k$.
 
 The last case is when $\Delta_\mathbf{U}^\frac{p-1}{2} \equiv 0 \pmod p$. Notice that we only need to prove (\ref{theor_n2_e1}) for $k\geqslant 2$ because as $p|\Delta_\mathbf{U}$ the case for $k=1$ is trivial.  We saw in the proof of Theorem \ref{TheoremNotSquare} that in this case $\alpha^p \equiv A \pmod p$ for some $A \in \Z$. By Lemma \ref{lem_allInOne} we have
 \begin{align}\label{theor_n2_e3}
     \alpha^{p^2} \equiv A^p \pmod {p^2}.
 \end{align}
 Now using Fermat's little theorem and Lemma \ref{lem_allInOne} on steps marked with ($*$) we see that
 \begin{align}
     \alpha^{p^4} \equiv
     \left( \alpha^{p^2}\right)^{p^2} \overset{(\ref{theor_n2_e3})}{\equiv} 
     \left( A^p\right)^{p^2}
     \equiv A^{p^3}
    \overset{(*)}{\equiv} A^{p^2}
    \overset{(*)}{\equiv}A^p
       \overset{(\ref{theor_n2_e3})}{\equiv} \alpha^{p^2}
       \pmod{p^2}.
 \end{align}
 With that it is easy to prove the congruence (\ref{theor_n2_e1}) for $k\geqslant 2$ in a similar way to the previous two cases.

\end{proof}

This shows that the sequence $({U}_{n^{t}})_{n\in\N_+}$ almost  satisfies the Dold condition for $t=2$. But what about the other powers?
% The same is true for other even powers which follows easily. 

One can see that if a sequence $\mathbf{V}$ satisfies the Dold condition then does so the sequence $({V}_{n^t})_{n\in\N_+}$ for any $t\in\N_+$ because
\begin{align}
    {V}_{(p^ks)^t} \equiv 
    {V}_{p^{kt}s^t} \equiv
    {V}_{p^{kt-1}s^t} \equiv
    \cdots \equiv
    {V}_{p^{kt-t}s^t} \equiv
    {V}_{(p^{k-1}s)^t}
    \pmod {p^{kt-t+1}}.
\end{align}

As $kt-t+1 \geqslant k$ we have ${V}_{(p^ks)^t}\equiv {V}_{(p^{k-1}s)^t} \pmod{p^k}$ for any prime $p$ and $k,s, t \in \N_+$ which proves that $({V}_{n^t})_{n\in\N_+}$ satisfies the Dold condition. 

The same argument works to show that if a sequence $\mathbf{V}$ \textit{almost}
satisfies the Dold condition then so does the sequence $({V}_{n^t})_{n\in\N_+}$ for any $t\in \N_+$.

\begin{corollary}\label{theor_n2_final}
    Let $\mathbf{U}$ be any recurrent sequence of order 2. Then the sequence $(U_{n^{t}})_{n\in\N_+}$ almost satisfies the Dold condition for all even $t\in\N_+$.
\end{corollary}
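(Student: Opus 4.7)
The plan is to reduce the corollary to the case $t=2$ already established in Theorem \ref{theor_n2}, by exploiting the closure statement proved immediately before the corollary: if a sequence $\mathbf{V}$ almost satisfies the Dold condition, then so does $(V_{n^m})_{n\in\N_+}$ for every $m\in\N_+$. Since any even $t\in\N_+$ factors as $t=2m$, the key observation is that $U_{n^t}=U_{n^{2m}}=U_{(n^m)^2}$, so the subsequence indexed by $n^t$ can be written as an $m$-th power subsample of the square-indexed subsequence.

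Concretely, I would proceed as follows. First I would set $V_n:=U_{n^2}$ for $n\in\N_+$; by Theorem \ref{theor_n2} the sequence $\mathbf{V}$ almost satisfies the Dold condition, and in fact the bound $\fail(\mathbf{V})\mid r_2\Delta_{\mathbf{U}}\rad(\Delta_{\mathbf{U}})$ from the proof of that theorem is available if a quantitative statement were needed. Next, given an even integer $t=2m$, I would observe the identity $U_{n^t}=V_{n^m}$, so that $(U_{n^t})_{n\in\N_+}$ coincides with the subsequence $(V_{n^m})_{n\in\N_+}$. Finally, I would invoke the closure remark preceding the corollary to conclude that $(V_{n^m})_{n\in\N_+}$ almost satisfies the Dold condition, which is the claim.

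There is no real obstacle: both ingredients are already at hand, and the argument amounts to the reindexing $n^{2m}=(n^m)^2$. The only point worth a brief comment is that the closure statement was justified in the text for sequences that satisfy the Dold condition, with an immediate extension by multiplication to those that \emph{almost} satisfy it; I would simply cite that paragraph rather than re-derive the congruence $V_{(p^ks)^m}\equiv V_{(p^{k-1}s)^m}\pmod{p^k}$, which follows by iterating the Dold condition on the repaired sequence $cV_n$ for a suitable $c$.
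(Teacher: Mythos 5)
Your argument is correct and is essentially the paper's own: the corollary is intended to follow from Theorem \ref{theor_n2} combined with the closure remark immediately preceding it, via exactly the reindexing $U_{n^{2m}} = V_{n^m}$ with $V_n := U_{n^2}$. Nothing further is needed.
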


\section{The case of arbitrary order}

The proofs in the case of order 2 were straightforward. This case had very limited number of unknown variables, only 2. They were $\alpha$ and $\beta$ or equivalently  $r_1,$ and  $r_2$. We exploited this fact thoroughly. The general case is a bit more tricky to show. We will take a step by step approach beginning with the simplest but most restricted case. 

\subsection{Sequences that are \nice}

We begin with narrowing our focus to certain class of sequences that behave nicely.
 
\begin{defin}\label{def_conv}
    We will call a sequence ${\mathbf U}$ \nice if
    
    \begin{enumerate}
    \item[(c1)] ${\mathbf U}$ is defined by linear recursion as in (\ref{def_Un}),
    \item[(c2)] its characteristic polynomial $C_{\mathbf U}$ is irreducible over $\F_p$ for infinitely many primes $p$.
\end{enumerate}

\end{defin}

% Irreducibility of a polynomial simplifies many calculations especially when it happens modulo infinitely many primes. In fact, we rely so heavily on one consequence of (c2) that we will give it a name.

% \begin{Lemma}[\lex]
%     A polynomial is irreducible over $\F_p$ if and only if the automorphism of the field of decomposition of this polynomial over $\F_p$ given by $x\mapsto x^p$ permutes its roots in a cyclic way.
% \end{Lemma}

\begin{lemma}  \label{theor_Un}
    If a sequence ${\mathbf U}$ satisfies the Dold condition and is \nice\!\!, then $l_0=l_1=\cdots=l_{d-1}$.
\end{lemma}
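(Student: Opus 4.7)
My plan follows the blueprint of the direction $(1\Rightarrow 2)$ in Theorem \ref{TheoremNotSquare}. The convenience hypothesis (c2) provides infinitely many primes $p$ for which $C_{\mathbf U}$ is irreducible modulo $p$; at each such prime, Frobenius acts on the reduced roots as a single $d$-cycle, and the Dold condition should then give enough linear equations in $l_0,\ldots,l_{d-1}$ to collapse them to a single value.

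I would first discard the finite set of bad primes: those dividing $d!$, those dividing $r_d$ (so each $\alpha_i$ is a $\pp$-adic unit), those dividing $\Delta_{\mathbf U}$ (so the reductions $\bar\alpha_i$ are pairwise distinct modulo $\pp$), and those appearing in the denominators of $l_0,\ldots,l_{d-1}$. By (c2) infinitely many primes $p$ at which $C_{\mathbf U}$ is irreducible still remain. Fix such a $p$ and a prime $\pp$ of $\OO_K$ above $p$. Since $C_{\mathbf U}\bmod p$ is irreducible of degree $d$, the residue field $\OO_K/\pp$ is $\F_{p^d}$, and Frobenius $x\mapsto x^p$ cycles the $\bar\alpha_i$ as a single $d$-cycle: there exists a $d$-cycle $\sigma$ such that $\alpha_i^p \equiv \alpha_{\sigma(i)} \pmod{\pp}$ for all $i$.

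Now for every $n$ coprime to $p$, the Dold condition in the form (p2) yields $p \mid U_{pn}-U_n$. Substituting the closed form $U_n = \sum_i l_i \alpha_i^n$ and using $\alpha_i^{pn} \equiv \alpha_{\sigma(i)}^n \pmod{\pp}$, I obtain
$$
0 \equiv U_{pn}-U_n \equiv \sum_{i=0}^{d-1}\bigl(l_{\sigma^{-1}(i)}-l_i\bigr)\alpha_i^n \pmod{\pp}.
$$
Taking $n=1,2,\ldots,d$ (all coprime to $p$ once $p>d$) produces a $d\times d$ linear system in the quantities $l_{\sigma^{-1}(i)}-l_i$, whose coefficient matrix factors as $\mathrm{diag}(\alpha_0,\ldots,\alpha_{d-1})$ times the usual Vandermonde in $\alpha_0,\ldots,\alpha_{d-1}$. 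Both factors are units modulo $\pp$ by construction, so $l_{\sigma^{-1}(i)}\equiv l_i\pmod{\pp}$ for every $i$, and since $\sigma$ is a $d$-cycle, iteration forces $l_0\equiv l_1\equiv\cdots\equiv l_{d-1}\pmod{\pp}$. Running this over the infinitely many such prime ideals $\pp$, each nonzero difference $l_i-l_j$ (after multiplying by a common denominator so that it lies in $\OO_K$) would belong to infinitely many prime ideals of $\OO_K$, which is impossible. Hence $l_0=l_1=\cdots=l_{d-1}$.

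The main obstacle I anticipate is the initial Galois-theoretic bookkeeping: verifying rigorously that convenience forces Frobenius to act as a full $d$-cycle on the residues of the $\alpha_i$, and that the excluded set of bad primes is indeed finite. Once these are in place, the Vandermonde step is purely formal, and the passage from congruences modulo infinitely many $\pp$ to actual equality in $K$ is standard.
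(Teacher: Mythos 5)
Your proof is correct, and it rests on the same three pillars as the paper's: infinitely many primes at which $C_{\mathbf U}$ stays irreducible, the resulting $d$-cyclic action of Frobenius on the residues of the roots, and a Vandermonde non-degeneracy argument, followed by the standard observation that an element of $\OO_K$ lying in infinitely many primes is zero. The execution differs in one structural respect. The paper averages over the whole Frobenius orbit: using the chained congruences $U_s\equiv U_{p^k s}\pmod{\pp}$ for all $k$ it derives $d\,U_s\equiv U_0\sum_i\alpha_i^s\pmod{\pp}$, promotes this to an exact identity of sequences over $K$, and only then applies the Vandermonde matrix (over $K$, not modulo $\pp$) to identify all $l_i$ with $U_0/d$. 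You instead use only the single step $p\mid U_{pn}-U_n$, solve a Vandermonde system modulo $\pp$ to get $l_{\sigma^{-1}(i)}\equiv l_i\pmod{\pp}$, and chain around the $d$-cycle before globalizing. Your version is slightly leaner in that it needs the Dold condition only at $k=1$ and avoids the division by $d$; the paper's orbit-averaging has the advantage of producing the explicit common value $l_i=U_0/d$ and of reusing the same symmetrization pattern later (Lemma \ref{lem_infinEdge}). Your bookkeeping of bad primes (excluding divisors of $r_d$, $\Delta_{\mathbf U}$, and the denominators of the $l_i$ so that the diagonal-times-Vandermonde determinant $\pm r_d\sqrt{\Delta_{\mathbf U}}$ is a unit modulo $\pp$) is exactly what is needed and is finite, so there is no gap.
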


\begin{proof}

    There are infinitely many prime numbers that are not ramified in $\OO_K$ and for which (c2) occurs. Let  $\pp$  be a prime ideal lying over one of them.
    Because ${\mathbf U}$ satisfies the Dold condition, we have $ U_{p^ks} \equiv  U_{p^{k-1}s} \pmod \pp$ for every $k,s\in\N_+$. By chaining these equivalences we can get
    \begin{align}\label{theor_Un_e1}
         U_{s} \equiv  U_{p^{k}s} \pmod \pp \quad\text{for every $k,s\in\N_+$}.
    \end{align}
    By \cite{frob} (Lemma 5.5.8) because $C_\mathbf{U}$ is irreducible over $\F_p$, the automorphism $x \mapsto x^p$ permutes the roots $\alphas$ in a cyclic way. Therefore 
    \begin{align}\label{theor_Un_e1.5}
        \sum_{i=0}^{d-1}\alpha_j^{p^i} \equiv \sum_{i=0}^{d-1}\alpha_i \pmod \pp 
        \quad\quad\text{for each $0\leqslant j<d$}.
    \end{align}
    % Every automorphizm of $K$
    We have
    \begin{equation}
      \begin{split}
        d\ U_s \overset{(\ref{theor_Un_e1})}{\equiv}
        \sum_{i=0}^{d-1}U_{p^is} \equiv
        \sum_{i=0}^{d-1}  \sum_{j=0}^{d-1} l_j\alpha_{j}^{p^is} \equiv
        \sum_{j=0}^{d-1} l_j\sum_{i=0}^{d-1}  \alpha_{j}^{p^is} 
        \\ \overset{(\ref{theor_Un_e1.5})}{\equiv}
        \sum_{j=0}^{d-1} l_j\sum_{i=0}^{d-1}  \alpha_{i}^s \equiv 
        \sum_{j=0}^{d-1} l_j \cdot \sum_{i=0}^{d-1}  \alpha_{i}^s \equiv
        U_0\sum_{i=0}^{d-1}  \alpha_{j}^s \pmod \pp.
    \end{split}  
    \end{equation}

    The congruence above is true for infinitely many primes $\pp$, hence $d U_s= U_0\sum_{i=0}^{d-1} \alpha_{i}^s$. Since the choice of $s\in\N_+$ was arbitrary we see that the sequence ${\mathbf U}$ has the same values at any term as the sequence ${\mathbf V}$ defined as $V_n := \frac{ U_0}{d}\sum_{i=0}^{d-1} \alpha_{i}^n$. We can also extend the sequences to be defined at 0 and see that ${V}_0 = \frac{ U_0}{d}d = U_0$

    Let us construct a $d\times d$ matrix as follows:
    \begin{align}
        M := \begin{bmatrix}
            \alpha_0^{0} & \alpha_1^{0} & \cdots & \alpha_{d-1}^{0}\\
            \alpha_0^{1} & \alpha_1^{1} & \cdots & \alpha_{d-1}^{1}\\
            \vdots & \vdots & \ddots & \vdots \\
            \alpha_0^{d-1} & \alpha_1^{d-1} & \cdots & \alpha_{d-1}^{d-1}
            \end{bmatrix}.
    \end{align}
    We notice that
    
    \begin{align}
        M \cdot 
        \begin{bmatrix}
            l_0\!-\!\frac{ U_0}{d} \\ 
            l_1\!-\!\frac{ U_0}{d}\\ 
            \vdots \\ 
            l_{d-1}\!-\!\frac{ U_0}{d}
        \end{bmatrix}
        =
        \begin{bmatrix}
             U_{0}- V_{0} \\ 
             U_{1}- V_{1}\\ 
            \vdots \\ 
             U_{d-1}- V_{d-1}
        \end{bmatrix}
        =
        \begin{bmatrix}
            0     \\ 
            0\\ 
            \vdots \\ 
            0
        \end{bmatrix}.
    \end{align}

    The matrix $M$ is a Vandermonde matrix, so its determinant is $\prod_{0\leqslant i<j<d}(\alpha_i-\alpha_j)$, which is nonzero. Then the matrix $M$ multiplied my any nonzero vector must be nonzero. Since the equation above is true, it is implied that the vector $[l_0\!-\!\frac{ U_0}{d},\ldots,l_{d-1}\!-\!\frac{ U_0}{d}]^T$ is zero, so $l_0=l_1=\cdots=l_{d-1}=\frac{ U_0}{d}$.

\end{proof}

The inverse to the statement of the above theorem is also true. If ${U}_n = l\sum_{i=0}^{d-1}\alpha^n$ for some constant $l$ and $\mathbf{U}$ is \nice then it almost satisfies the Dold condition. To show that we will need the following lemma. 

\begin{lemma}
    \label{lem_lte}
    Let $\beta_1, \cdots, \beta_n$ be all the roots of a polynomial with integer coefficients. Then 
    $$
        \sum_{i=1}^n \beta_i^{p^{k}} \equiv \sum_{i=1}^n \beta_i^{p^{k-1}}
        \pmod {\pp^{ek}} 
        % \quad\Rightarrow\quad
        % \sum_{i=1}^n \beta_i^{p^{k+1}} \equiv \sum_{i=1}^n \beta_i^{p^{k}}
        % \pmod {\pp^{e(t+1)}} 
    $$
    for any prime $p$ and $k\in \N$.
   
\end{lemma}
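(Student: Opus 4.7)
My plan is to reduce to an integer congruence and then prove it using a Galois-theoretic Frobenius together with Lemma \ref{lem_allInOne} in the unramified case, handling the ramified case (the main obstacle) by a Dieudonn\'e--Dwork-style formal power series argument.

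First, I would observe that the statement is equivalent to the integer divisibility $p^k \mid \sum_i \beta_i^{p^k} - \sum_i \beta_i^{p^{k-1}}$: both sides are rational integers, being symmetric functions of the roots of a monic integer polynomial, and $\mathfrak{p}^{ek} \cap \Z = p^k\Z$, so the choice of the ambient field $K$ and prime $\mathfrak{p}$ is immaterial. With this reduction I would work in a splitting field $L$ of the polynomial over $\Q$ and fix a prime $\mathfrak{Q}$ of $\OO_L$ over $p$ with ramification index $e_L$. In the decomposition subgroup of $\gal(L/\Q)$ at $\mathfrak{Q}$ there is a Frobenius element $\phi$ satisfying $\phi(y) \equiv y^p \pmod{\mathfrak{Q}}$ for every $y \in \OO_L$, and as an element of $\gal(L/\Q)$ it permutes the roots $\beta_i$. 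If the per-root congruence $\beta_i^{p^k} \equiv \phi(\beta_i^{p^{k-1}}) \pmod{\mathfrak{Q}^{e_L k}}$ can be established, then summing over $i$ and using $\sum_i \phi(\beta_i^{p^{k-1}}) = \sum_j \beta_j^{p^{k-1}}$ gives the desired congruence modulo $\mathfrak{Q}^{e_L k}$, which restricts to $p^k\Z$ in $\Z$.

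In the unramified case $e_L = 1$ we have $\mathfrak{Q} = p\OO_L$, so $\phi(y) \equiv y^p \pmod p$; Lemma \ref{lem_allInOne} with $d = 1$ and $e = 1$ lifts this to $\phi(y)^{p^{k-1}} \equiv y^{p^k} \pmod{p^k}$, and since $\phi$ is a ring homomorphism the left side equals $\phi(y^{p^{k-1}})$, giving the per-root claim.

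The main obstacle is the ramified case $e_L > 1$: then $\phi(y) \equiv y^p$ holds only modulo $\mathfrak{Q}$, not modulo $p$, and Lemma \ref{lem_allInOne} requires its input congruence to hold modulo $\mathfrak{Q}^{de_L}$ for a positive integer $d$, forcing $d \geqslant 1/e_L$ and failing whenever $e_L > 1$. To bypass this I would work with the formal power series $f(t) := \prod_i (1 - \beta_i t) \in 1 + t\Z[t]$. The Freshman's dream together with Fermat's little theorem applied to the (integer) coefficients of $f$ yields $f(t)^p \equiv f(t^p) \pmod{p\Z[t]}$, so $g(t) := f(t^p)/f(t)^p \in 1 + pt\Z_p[[t]]$. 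All coefficients of $\log g$ then lie in $p\Z_p$ (because $v_p(p^n/n) \geqslant 1$ for every $n \geqslant 1$), and comparing the coefficient of $t^{p^k}$ in the identity $\log g = \log f(t^p) - p \log f(t)$ against $-\log f(t) = \sum_{n \geqslant 1} s_n t^n / n$ yields the integer congruence $p^k \mid s_{p^k} - s_{p^{k-1}}$ directly, uniformly in $p$ regardless of any ramification behaviour in $L$.
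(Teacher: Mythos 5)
Your proof is correct, but it takes a genuinely different route from the paper. The paper argues by induction on $k$ directly in $\OO_K$: it expands $\bigl(\sum_i\beta_i^{p^{k}}\bigr)^p$ by the multinomial theorem, groups compositions by partitions so that each inner sum is a symmetric (hence integral) expression in the $\beta_i$, uses that the nontrivial multinomial coefficients are divisible by $p$, and applies the inductive hypothesis to the products $\beta_1^{i_1}\cdots\beta_n^{i_n}$, which are the roots of an auxiliary monic integer polynomial. You instead first reduce to the rational-integer congruence $p^k\mid s_{p^k}-s_{p^{k-1}}$ for the power sums $s_m=\sum_i\beta_i^m$ (valid, since $\pp^{ek}\cap\Z=p^k\Z$), and then prove that congruence by the Dwork-style argument on $f(t)=\prod_i(1-\beta_i t)$: one has $f(t^p)/f(t)^p\in 1+pt\Z_p[[t]]$, hence its logarithm has all coefficients in $p\Z_p$, and the coefficient of $t^{p^k}$ in $\log f(t^p)-p\log f(t)$ is exactly $(s_{p^k}-s_{p^{k-1}})/p^{k-1}$. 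This is the classical proof of the Gauss congruences for power sums (compare the Beukers--Houben--Straub reference in the bibliography); it avoids the partition combinatorics entirely and is arguably cleaner, at the cost of invoking formal $p$-adic logarithms rather than staying elementary. Two small remarks. First, your preliminary Frobenius discussion is redundant, since the power-series argument is uniform in $p$ and already covers the unramified case; it also contains a slip: $e_L=1$ does not give $\mathfrak{Q}=p\OO_L$ (that would require $p$ inert), though the argument survives if you work modulo $\mathfrak{Q}^{k}$ throughout and only intersect with $\Z$ at the end. Second, both your proof and the paper's implicitly require the polynomial to be monic, so that the power sums (respectively the elementary symmetric functions) of the $\beta_i$ are rational integers; the lemma as stated says only ``integer coefficients,'' so this is not a defect of your argument relative to the paper's.
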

\begin{proof}
    A partition of $n$ is a (unordered) multiset of positive integers $I = \{i_1, \ldots, i_m\}$ that sum up to $n$.
    A composition of $n$ is a (ordered) sequence of positive  integers $J = (i_1, \ldots, i_m)$ that sum up to $n$. When we look at a composition $J$ of $n$ and discard the order of the elements we get some partition $I$. We will denote this relation with $J\sim I$. Both partitions and compositions play a crucial role in the following formula.

    Let us define $S$ is the set of all partitions of $p$ into $n$ elements and $S^-$ is $S$ without the trivial partition $p+0+\cdots+0$.
    Consider a prime power of a sum:
    \begin{align}\label{theor_real_easy_sum1}
        &(\beta_1 + \beta_2 +\cdots+ \beta_n)^p 
        \\&\equiv 
        \sum_{i_1+\ldots+i_n=p} \binom{p}{i_1, \cdots, i_n} \left(\beta_1^{i_1} \cdots \beta_n^{i_n}\right) \label{theor_real_easy_sum1.3}\\&\equiv
         \sum_{I=\{i_1,\ldots,i_n\}\in S} \binom{p}{i_1, \cdots, i_n} \sum_{\substack{J=(j_1,\ldots, j_n) \\ J \sim I}}\left(\beta_1^{i_1} \cdots \beta_n^{i_n}\right) 
         \label{theor_real_easy_sum1.6}\\&\equiv
        \sum_{i=1}^n \beta_i^{p} + \sum_{I=\{i_1,\ldots,i_n\}\in S^-} \binom{p}{i_1, \cdots, i_n} 
          \sum_{\substack{J=(j_1,\ldots, j_n) \\ J \sim I}}\left(\beta_1^{j_1} \cdots \beta_n^{j_n}\right).\label{theor_real_easy_sum2}
    \end{align}
    The sum in line (\ref{theor_real_easy_sum1.3}) sums over all compositions of $p$, the outer sum in line (\ref{theor_real_easy_sum1.6}) over all partitions and the inner sum over all compositions with the given partition. The sums in line (\ref{theor_real_easy_sum2}) work analogusly. 
    Notice that the value of the innermost sum is an integer as it is a symmetric polynomial of $\beta_1, \ldots, \beta_n$. The value of every term of the outer sum is an integer divisible by $p$. This is guaranteed by the divisibility of the multinomial symbol $\binom{p}{i_1, \cdots, i_n}$ by $p$ when $i_1,\ldots ,i_n$ are less than $p$.
    
    We will prove the lemma by induction. The base case for $k=1$ goes as follows. As the sum $\sum_{i=1}^n \beta_i$ is an integer we can use the Fermat's Little Theorem.
    \begin{align}\label{theor_real_easy_e1}
        \sum_{i=1}^n \beta_i &\equiv \left(\sum_{i=1}^n \beta_i\right)^p\pmod {\pp^e}
    \end{align}
    On the other hand,  
    \begin{equation}
        \begin{split}\label{theor_real_easy_e2}
        &\left(\sum_{i=1}^n \beta_i\right)^p 
        \equiv 
        \sum_{i=1}^n \beta_i^p + \sum_{I=\{i_1,\ldots, i_n\}\in S^-} \binom{p}{i_1, \ldots, i_n} \sum_{\substack{J=(j_1,\ldots, j_n) \\ J \sim I}}(\beta_1^{i_1}\cdots\beta_n^{i_n}) 
        \\&\equiv 
        \sum_{i=1}^n \beta_i^p  + 0%\label{theor_real_easy_e3}
        \pmod {\pp^e} . 
        \end{split}
    \end{equation}

    Connecting congruences (\ref{theor_real_easy_e1})  and (\ref{theor_real_easy_e2}) one can see that $\sum_{i=1}^n \beta_i \equiv  \sum_{i=1}^n \beta_i^p  \pmod {\pp^e}$ which ends the base case. For the step case we assume that the Theorem holds for any suitable choice of $\beta_1,\ldots,\beta_n$ and some $k$ and we will prove it for $k+1$.

    Firstly by the Lemma \ref{lem_allInOne} we have
    
    \begin{align} \label{theor_real_easy_e5}
        \sum_{i=1}^n \beta_i^{p^{k}} \equiv \sum_{i=1}^n \beta_i^{p^{k-1}} \pmod{\pp^{ek}} 
        \quad\Rightarrow\quad
        \left(\sum_{i=1}^n \beta_i^{p^{k}}\right)^p \equiv \left(\sum_{i=1}^n \beta_i^{p^{k-1}}\right)^p \pmod{\pp^{e(k+1)}}.
    \end{align}

    Now we apply the congruences (\ref{theor_real_easy_sum1})- (\ref{theor_real_easy_sum2}) to the expression $\left(\sum_{i=1}^n \beta_i^{p^{ek}}\right)^p$.

    \begin{align}
        \left(\sum_{i=1}^n \beta_i^{p^{k}}\right)^p 
        -\sum_{i=1}^n \beta_i^{p^{k+1}}  &\equiv \\
          \sum_{I = \{i_1,\ldots,i_n\}\in S^-} \binom{p}{i_1, \cdots, i_n} 
         \sum_{\substack{J=(j_1,\ldots, j_n) \\ J \sim I}}\left(\beta_1^{p^k}\right)^{i_1} \cdots \left(\beta_n^{p^k}\right)^{i_n} &\equiv\\
           \sum_{I = \{i_1,\ldots,i_n\}\in S^-} \binom{p}{i_1, \cdots, i_n} 
         \sum_{\substack{J=(j_1,\ldots, j_n) \\ J \sim I}}\left(\beta_1^{i_1} \cdots \beta_n^{i_n}\right)^{p^k} \label{theor_real_easy_e4}&\equiv\\
          \sum_{I = \{i_1,\ldots,i_n\}\in S^-} \binom{p}{i_1, \cdots, i_n} 
         \sum_{\substack{J=(j_1,\ldots, j_n) \\ J \sim I}}\left(\beta_1^{i_1} \cdots \beta_n^{i_n}\right)^{p^{k-1}}&\equiv\\
          \left(\sum_{i=1}^n \beta_i^{p^{k-1}}\right)^p 
        -\sum_{i=1}^n \beta_i^{p^{k}} 
        \pmod{\pp^{e(k+1)}}
    \end{align}

    The terms in the most inner sum of expression (\ref{theor_real_easy_e4}) are all the roots of some polynomial $W$. The coefficients of $W$ are symmetric polynomials in $\beta_i$, therefore by the fundamental theorem of symmetric polynomials (\cite{mcdonald} pages 19-21) they can be represented in terms of elementary symmetric polynomials $e_i(\beta_1, \ldots, \beta_n)$ which are integers by the definition of the numbers $\beta_i$, $i\in\{1,\ldots ,n\}$. We can therefore apply the inductive assumption. The assumption works under modulo $\pp^{ek} $  but because the multinomial symbols are divisible by $p$ we get the congruence modulo $\pp^{e(k+1)}$.
    
    Looking at implication (\ref{theor_real_easy_e5}) we can conclude that \linebreak $\sum_{i=1}^n \beta_i^{p^{k+1}} \equiv \sum_{i=1}^n \beta_i^{p^{k}} \pmod{\pp^{e(k+1)}}$.

\end{proof}
\begin{corollary}\label{c_sum}
    Let $U_n = l\sum_{i=0}^{d-1}\alpha_i^n$ be a sequence with irreducible characteristic polynomial. Then $\mathbf{U}$ almost satisfies the Dold conditoin.
\end{corollary}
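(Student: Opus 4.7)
The plan is to take the base case $s=1$ handled by Lemma~\ref{lem_lte} and boost it to arbitrary $s \in \N_+$ by re-applying the same lemma with $\beta_i := \alpha_i^s$ in place of the roots $\alpha_i$. The Dold condition for a suitable integer multiple of $\mathbf U$ will then follow after translating $\pp$-adic congruences to $p$-adic ones.

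First I would justify the substitution. The algebraic integers $\alpha_0^s, \ldots, \alpha_{d-1}^s$ are precisely the roots of $\prod_{i=0}^{d-1}(x - \alpha_i^s)$, a polynomial whose coefficients are the elementary symmetric polynomials in the $\alpha_i^s$ and hence symmetric in $\alpha_0, \ldots, \alpha_{d-1}$. By the fundamental theorem of symmetric polynomials these coefficients are integer polynomial expressions in the coefficients of $C_{\mathbf U}$, so they lie in $\Z$. Thus Lemma~\ref{lem_lte} applies and yields, for every prime $p$, every prime ideal $\pp \subset \OO_K$ above $p$ of ramification index $e$, and every $k \in \N_+$,
\[
   \sum_{i=0}^{d-1}\alpha_i^{sp^k} \equiv \sum_{i=0}^{d-1}\alpha_i^{sp^{k-1}} \pmod{\pp^{ek}}.
\]

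Next I would pick a positive integer $N$ with $Nl \in \Z$; since $U_0 = ld$ is an integer, the choice $N = d$ works. Multiplying the displayed congruence by $Nl$ gives
\[
   N U_{sp^k} \equiv N U_{sp^{k-1}} \pmod{\pp^{ek}}.
\]
Both sides are now rational integers, and for any $a \in \Z$ one has $\nu_\pp(a) = e \cdot \nu_p(a)$, so divisibility by $\pp^{ek}$ in $\OO_K$ forces divisibility by $p^k$ in $\Z$. Thus $(NU_n)_{n \in \N_+}$ meets condition~(p2), so $\mathbf U$ almost satisfies the Dold condition and $\fail(\mathbf U) \mid N$.

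The only point that requires care is spotting the right substitution $\beta_i \mapsto \alpha_i^s$ and verifying the integrality of the auxiliary polynomial via symmetric functions; once that is done, the transition from $\pp$-adic to $p$-adic divisibility is routine. Notably, the irreducibility of $C_{\mathbf U}$ appearing in the hypotheses is not actually used in this argument.
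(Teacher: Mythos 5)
Your proof follows the paper's argument exactly: apply Lemma \ref{lem_lte} with $\beta_i=\alpha_i^{s}$ (a substitution the paper leaves implicit and which you correctly justify via the fundamental theorem of symmetric polynomials), multiply the resulting congruence by the integer $Nl$, and descend from $\pp^{ek}$ to $p^{k}$ using the equivalence of (p2) and (p3). The one flaw is your justification for the existence of $N$: it is not true that $U_0=ld$ must be an integer, since the sequence is only assumed integral for $n\geqslant 1$. For example, with $C_{\mathbf U}(x)=x^{2}-4x-2$ (irreducible) and $l=\tfrac14$ one gets the integer sequence $U_1=1$, $U_2=5$, $U_3=22,\ldots$ satisfying $U_n=4U_{n-1}+2U_{n-2}$, yet $U_0=ld=\tfrac12$. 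So the specific choice $N=d$, and the implied bound $\fail(\mathbf U)\mid d$, are unjustified. This does not endanger the corollary itself, which only asserts that \emph{some} repair factor exists: since $V_n:=\sum_i\alpha_i^{n}$ is a nonzero integer for some $n\geqslant 1$ (unless $C_{\mathbf U}(x)=x^{d}$, in which case $\mathbf U$ is eventually zero and the claim is trivial), we have $l=U_n/V_n\in\Q$, and $N$ can be taken to be the denominator of $l$ — which Lemma \ref{lem_gcd} later shows divides $\gcd(r_1,2r_2,\ldots,dr_d)$. Replace the sentence about $U_0$ with this observation and the proof is complete; your closing remark that irreducibility is not actually needed for this direction is also correct.
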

\begin{proof}
    Let $N\in\Z$ be such a number that $Nl\in\Z$. With the above Lemma it is easy to see that $\mathbf{U}$ almost satisfies the Dold condition and that $\fail(\mathbf{U})|N$ because

\begin{align}
    N{U}_{sp^k} \equiv 
    (Nl)\sum_{i=0}^{d-1}\alpha_i^{sp^k}  \overset{\text{Lemma \ref{lem_lte}}}{\equiv}
    (Nl)\sum_{i=0}^{d-1}\alpha_i^{sp^{k-1}}  \equiv
    N{U}_{sp^{k-1}}  \pmod { \pp^{ek}}
\end{align}

for every $s,k\in \N_+$ and prime $p$ which implies that $N\mathbf{U}$ satisfies the Dold condition.  

\end{proof}

Collecting the results of this section, we get the following.

\begin{theorem}\label{the_nice}
    If a sequence $\mathbf{U}$ is \nice then it almost satisfies the Dold condition if and only if $l_0 = l_1 = \cdots = l_{d-1}$.
\end{theorem}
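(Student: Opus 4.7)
The plan is to observe that this theorem just packages Lemma \ref{theor_Un} and Corollary \ref{c_sum} as a biconditional, so the proof should be short.

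For the forward direction, I would begin with the hypothesis that $\mathbf{U}$ is convenient and almost satisfies the Dold condition. Setting $c := \fail(\mathbf{U}) \in \N_+$, the sequence $c\mathbf{U}$ satisfies the Dold condition by the definition of $\fail$. Since $c\mathbf{U}$ obeys the same linear recursion as $\mathbf{U}$ and has the same characteristic polynomial $C_\mathbf{U}$, it is itself convenient in the sense of Definition \ref{def_conv}. I would then apply Lemma \ref{theor_Un} directly to $c\mathbf{U}$, whose closed-form coefficients are $cl_0, \ldots, cl_{d-1}$, obtaining $cl_0 = cl_1 = \cdots = cl_{d-1}$. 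Dividing through by $c \neq 0$ gives the desired equality $l_0 = l_1 = \cdots = l_{d-1}$.

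For the reverse direction, I would assume $l_0 = \cdots = l_{d-1} =: l$, so that $U_n = l\sum_{i=0}^{d-1}\alpha_i^n$. To invoke Corollary \ref{c_sum} I need $C_\mathbf{U}$ to be irreducible over $\Q$; this follows from the convenient hypothesis, because a factorization of $C_\mathbf{U}$ over $\Z$ (equivalently over $\Q$, by Gauss's lemma, since $C_\mathbf{U}$ is monic) would reduce to a factorization modulo every prime, contradicting the existence of infinitely many primes for which $C_\mathbf{U}$ stays irreducible over $\F_p$. With $\Q$-irreducibility in hand, Corollary \ref{c_sum} delivers that $\mathbf{U}$ almost satisfies the Dold condition.

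The only housekeeping points are (i) noting that being convenient depends only on $C_\mathbf{U}$ and is therefore preserved under multiplying the sequence by a nonzero integer, and (ii) the immediate contrapositive argument that convenient implies $\Q$-irreducibility. Neither is a genuine obstacle, so I do not expect any serious difficulty; the whole argument should compress to a few lines once the two earlier results are cited.
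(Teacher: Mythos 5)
Your proof is correct and follows exactly the route the paper intends: the theorem is stated there as a summary ("Collecting the results of this section"), with the forward direction coming from Lemma \ref{theor_Un} applied to $\fail(\mathbf{U})\cdot\mathbf{U}$ and the reverse from Corollary \ref{c_sum}. Your two housekeeping observations (convenience depends only on $C_\mathbf{U}$, and convenience forces $\Q$-irreducibility) are precisely the glue the paper leaves implicit, and both are argued correctly.
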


\subsection{The case of irreducible characteristic polynomial}

 In this case we assume that the characteristic polynomial $C_\mathbf{U}$ is irreducible. This is weaker assumption than $\textbf{U}$ to be \nice as there are polynomials that are not reducible over $\Z$ but reduce over any prime $p$. One of such polynomials is
 \begin{align}
     x^4-10x+1 = (x-\sqrt{2}-\sqrt{3})(x-\sqrt{2}+\sqrt{3})(x+\sqrt{2}-\sqrt{3})(x+\sqrt{2}+\sqrt{3}).
 \end{align}

\begin{theorem}\label{the_part}
    If a the characteristic polynomial $C_\mathbf{U}$ is irreducible then $\mathbf{U}$ almost satisfies the Dold condition if and only if $l_0 = l_1 = \cdots = l_{d-1}$.
\end{theorem}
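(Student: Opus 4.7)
The direction $(l_0 = \cdots = l_{d-1}) \Rightarrow \mathbf{U}$ almost satisfies the Dold condition is immediate from Corollary \ref{c_sum}, so all the work lies in the converse. My plan is to imitate the strategy of Lemma \ref{theor_Un}, replacing the hypothesis that $C_\mathbf{U}$ remains irreducible modulo infinitely many primes (which is precisely what the \nice assumption supplied but is no longer available, as the example $x^4-10x^2+1$ shows) with a Chebotarev-based argument inside the Galois group of the splitting field.

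Let $K$ denote the splitting field of $C_\mathbf{U}$, let $G = \gal(K/\Q)$, and choose $N \in \N_+$ so that $Nl_i \in \OO_K$ for every $i$. Assume $c\mathbf{U}$ satisfies the Dold condition for some $c \in \N_+$. Fix an arbitrary $\sigma \in G$ and let $\pi$ be the permutation of $\{0,\ldots,d-1\}$ defined by $\sigma(\alpha_i) = \alpha_{\pi(i)}$. By Chebotarev's density theorem there are infinitely many unramified rational primes $p$ admitting a prime ideal $\pp \subset \OO_K$ above $p$ whose Frobenius equals $\sigma$; restricting to those coprime to $Nc$ still leaves infinitely many. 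For any such $\pp$ and any $s \in \N_+$ coprime to $p$, the Frobenius congruence $\alpha_i^p \equiv \alpha_{\pi(i)} \pmod{\pp}$ (hence $\alpha_i^{ps} \equiv \alpha_{\pi(i)}^s \pmod{\pp}$) combined with the Dold condition for $c\mathbf{U}$ gives, after reindexing the first sum by $j=\pi(i)$,
\begin{align}
    0 \equiv c\bigl(U_{ps} - U_s\bigr) \equiv c\sum_{j=0}^{d-1}\bigl(l_{\pi^{-1}(j)} - l_j\bigr)\alpha_j^s \pmod{\pp}.
\end{align}
Fixing $s$ and multiplying by $N$, the expression $Nc\sum_{j}(l_{\pi^{-1}(j)}-l_j)\alpha_j^s$ is a single element of $\OO_K$ lying in infinitely many pairwise distinct prime ideals $\pp$ (one for each valid $p$), and therefore must equal $0$. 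Since $Nc \neq 0$, the identity $\sum_{j=0}^{d-1}(l_{\pi^{-1}(j)} - l_j)\alpha_j^s = 0$ holds in $K$ for every $s \in \N_+$, and at $s=0$ it holds automatically because $\pi$ is a permutation of $\{0,\ldots,d-1\}$.

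The equations at $s=0,1,\ldots,d-1$ then form a homogeneous linear system whose coefficient matrix is the Vandermonde in the pairwise distinct roots $\alpha_0,\ldots,\alpha_{d-1}$; as this matrix is nonsingular, we conclude $l_{\pi^{-1}(j)} = l_j$ for every $j$, i.e., the vector $(l_0,\ldots,l_{d-1})$ is $\pi$-invariant. Since $\sigma \in G$ was arbitrary and $G$ acts transitively on $\{\alpha_0,\ldots,\alpha_{d-1}\}$ by irreducibility of $C_\mathbf{U}$, this forces $l_0 = l_1 = \cdots = l_{d-1}$. I expect the main technical hurdle to be the invocation of Chebotarev's density theorem to realize every $\sigma \in G$ (rather than merely a cyclic Frobenius as in Lemma \ref{theor_Un}) as a Frobenius, together with the bookkeeping needed to pass between the non-integral $l_i$ and their $N$-multiples in $\OO_K$; once those are in place, the Vandermonde inversion and the transitivity step are routine.
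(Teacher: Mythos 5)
Your proof is correct, but it reaches the forward implication by a genuinely different route than the paper. The paper defers ``$\Rightarrow$'' to Theorem \ref{theor_general_form}, which is built on the graph $G_\mathbf{U}$: Lemma \ref{lem_infinEdge} shows that infinitely many edges between $s$ and $t$ force $l_s=l_t$ (by pigeonholing the Frobenius permutations and comparing $\mathbf{U}$ with an auxiliary sequence via a Vandermonde argument), and Lemma \ref{lem_irred} shows that roots of the same irreducible factor are joined by infinitely many edges (via connected components, integrality of the elementary symmetric functions of each component, and the Appendix theorem --- which is itself a Chebotarev argument). You short-circuit all of this for the irreducible case by invoking Chebotarev directly to realize \emph{every} $\sigma\in\gal(K/\Q)$ as the exact Frobenius of infinitely many primes, extracting from the Dold congruence $cU_{ps}\equiv cU_s\pmod{p}$ the vanishing of $\sum_j(l_{\pi^{-1}(j)}-l_j)\alpha_j^s$ (an element of $\OO_K$, after clearing denominators, lying in infinitely many primes), and then concluding $\pi$-invariance of $(l_0,\dots,l_{d-1})$ by Vandermonde and equality of all $l_i$ by transitivity. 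Your argument is shorter and avoids the graph machinery entirely; its cost is that it leans on the full strength of Chebotarev (every conjugacy class is hit), whereas the paper's Lemma \ref{lem_infinEdge} needs only a pigeonhole on finitely many permutations, with Chebotarev confined to the Appendix. Note also that your argument extends verbatim to reducible $C_\mathbf{U}$ with $\Delta_\mathbf{U}\neq 0$ (the $G$-orbits on the roots are exactly the root sets of the irreducible factors), so it would in fact yield the forward direction of Theorem \ref{theor_general_form} as well. The technical points you flag are all handled adequately: the restriction to primes coprime to $Nc$ (and, for each fixed $s$, to $s$) still leaves infinitely many primes, and the roots are pairwise distinct since $C_\mathbf{U}$ is irreducible over $\Q$, so the Vandermonde matrix is nonsingular.
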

\begin{proof}[The proof of "$\Leftarrow$"]
    The corollary \ref{c_sum} shows the implication "$\Leftarrow$". 
\end{proof}

The implication "$\Rightarrow$" is a bit harder to prove and will require tools that we will develop in the next case. We will complete this proof then.

If we, for a moment, believe in the above result, then we can prove the following strong bound on the $\fail$ factor.

\begin{lemma}\label{lem_gcd}
    If $C_\mathbf{U}$ is irreducible then $\fail(\mathbf{U})|\gcd(r_1, 2r_2, \cdots, dr_d)$.
\end{lemma}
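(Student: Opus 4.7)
Since $C_\mathbf{U}$ is irreducible and we may assume that $\mathbf{U}$ almost satisfies the Dold condition (otherwise there is nothing to prove), Theorem \ref{the_part} gives $l_0=l_1=\cdots=l_{d-1}=:l$, so that $U_n=l\,\sigma_n$ where $\sigma_n:=\sum_{i=0}^{d-1}\alpha_i^n$ is the $n$-th power sum of the roots. The proof of Corollary \ref{c_sum} in fact establishes more than its statement records: for any positive integer $N$ with $Nl\in\Z$ the sequence $N\mathbf{U}$ satisfies the Dold condition, and hence $\fail(\mathbf{U})\mid N$. The plan therefore reduces to exhibiting $g:=\gcd(r_1,2r_2,\ldots,dr_d)$ as such an~$N$.

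The crucial arithmetic ingredient is the identity $\gcd(\sigma_n:n\in\N_+)=g$. Newton's identities applied to the characteristic polynomial of $\mathbf{U}$ give
\[\sigma_k=r_1\sigma_{k-1}+r_2\sigma_{k-2}+\cdots+r_{k-1}\sigma_1+k\,r_k\quad\text{for }1\le k\le d,\]
while the defining recurrence yields $\sigma_k=r_1\sigma_{k-1}+\cdots+r_d\sigma_{k-d}$ for $k>d$. A straightforward induction on $k\le d$ then proves
\[\gcd(\sigma_1,\ldots,\sigma_k)=\gcd(r_1,2r_2,\ldots,k\,r_k);\]
indeed, by the inductive hypothesis each term $r_j\sigma_{k-j}$ appearing in Newton's identity is already divisible by $\gcd(\sigma_1,\ldots,\sigma_{k-1})$, so $\sigma_k\equiv k\,r_k$ modulo that gcd, which forces the inductive step. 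For $k>d$ the new $\sigma_k$ is a $\Z$-linear combination of the earlier ones, so the global gcd stabilises at $g$.

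With the gcd identity in hand, Bezout's lemma supplies integers $a_1,\ldots,a_M$ with $\sum_{n=1}^M a_n\sigma_n=g$; multiplying by $l$ and using $l\sigma_n=U_n\in\Z$ yields $gl=\sum_{n=1}^M a_n U_n\in\Z$. Applying the reduction above with $N=g$ then gives $\fail(\mathbf{U})\mid g$. The main obstacle is the inductive gcd computation, which requires tracking that every ``noise'' term $r_j\sigma_{k-j}$ in Newton's identity vanishes modulo $\gcd(\sigma_1,\ldots,\sigma_{k-1})$; everything else is a routine combination of Newton's identities, Bezout's lemma, and the content of Corollary \ref{c_sum}.
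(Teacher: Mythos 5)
Your proof is correct and follows essentially the same route as the paper: reduce to the power-sum sequence $\boldsymbol{\sigma}$ via Corollary \ref{c_sum}, then use Newton's identities to show $\gcd(\sigma_1,\ldots,\sigma_d)=\gcd(r_1,2r_2,\ldots,dr_d)$. The only (cosmetic) difference is that the paper writes $l=r/t$ in lowest terms and shows the denominator $t$ divides $\gcd(\sigma_1,\sigma_2,\ldots)$, whereas you invoke Bezout to conclude $gl\in\Z$ directly — the two bookkeeping devices are equivalent.
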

\begin{proof}

The sequence $\mathbf{U}$ has the form ${U}_n = l\sum_{i=0}^n\alpha_i^{d-1}$ where $l = \frac{{U}_1}{\sum_{i=0}^{d-1}\alpha_i} = \frac{{U}_1}{r_1}$. Hence, $l$ is rational. Let $l=\frac{r}{t}$ where $r$ and $t$ are coprime. Since $\mathbf{U}$ is sequence of integers, then $t$ must divide each term of $r\sum_{i=0}^{d-1}\alpha_i^n$ and because $r$ and $t$ are coprime, we have
\begin{align}
    t\ |\  \sum_{i=0}^{d-1}\alpha_i^n =: {V}_n \quad
    \text{}
\end{align}
 for every $n\in\N_+$. We already know that $\mathbf{V}$ satisfies the Dold condition, so clearly $\fail(\mathbf{U})|t$. Now, the only thing left is to prove the estimation on $t$.

% Let $E_k := e_k(\alphas) = r_k$ be the $k$-th elementary symmetric polynomial and let $P_k := p_k(\alphas) = \sum_{i=0}^{d-1}\alpha_i^k = \mathbf{V}_k$ be the $k$-th power sum. 

Since $t|{U}_k$ for every $k$, then 
\begin{align}\label{lem_gcd_inf}
    t | \gcd\left({V}_1, {V}_2, \ldots \right)
\end{align}
The greatest common divisor of infinitely many terms is quite troublesome but fortunately we can reduce it to a one with $d$ terms. Using the recurrence formula ${V}_n = r_1{V}_{n-1}+ r_2{V}_{n-2} + \cdots + r_d {V}_{n-d} $ for $n\geqslant d$ we can see that $ {V}_{d+1}, {V}_{d+2}, \ldots$ are just linear combinations of ${V}_1,{V}_2, \ldots,{V}_d $ so their presence (\ref{lem_gcd_inf}) changes nothing. Hence,
\begin{align}\label{lem_gcd_fin}
    t | \gcd\left({V}_1, {V}_2, \ldots \right) = 
    \gcd\left({V}_1, {V}_2, \ldots, {V}_d \right)
\end{align}

We notice that $r_k$ is just the $k$-th elementary symmetric polynomial in $\alphas$ and the value of ${V}_k$ is the $k$-th power sum of $\alphas$. With this in mind we can apply the Newton identities:

\begin{align}
    kr_k = \sum_{i=1}^k(-1)^{i-1}r_{k-i}{V}_i 
    \quad\text{ for $1 \leqslant k \leqslant d$}
\end{align}
This implies that $kr_k$ can be represented as linear combination of ${V}_1 , \ldots {V}_k$ and the coefficient in this combination near ${V}_k$ is $(-1)^{k-1}r_0 = \pm 1$. For each $k$ we can therefore add suitable multiples of ${V}_i$ ($1\leqslant i<k$) to ${V}_k$ in the greatest common divisor without changing its value.

\begin{equation}
    \begin{split}\label{lem_gcd_e1}
    &\gcd\left({V}_1, {V}_2, \ldots,  {V}_{d-1}, {V}_d \right) \\
    =&\gcd\left({V}_1, {V}_2, \ldots,  {V}_{d-1}, dr_d \right) \\
    =&\gcd\left({V}_1, {V}_2, \ldots,  (d-1)r_{d-1}, dr_d \right) \\
    &\vdots \\
    =&\gcd\left(r_1,2r_2, \ldots,  (d-1)r_{d-1}, dr_d \right) %.\label{lem_gcd_e2}
\end{split}  
\end{equation}

To end the proof it is enough to connect all the results

\begin{align}
    \fail(\mathbf{U}) \ \  |\ \ 
    t \ \overset{(\ref{lem_gcd_fin})}{|} \ 
    \gcd\left({V}_1, {V}_2, \ldots, {V}_d \right) \overset{(\ref{lem_gcd_e1})}{=}
    \gcd\left(r_1,2r_2, \ldots, dr_d \right).
\end{align}
\end{proof}

\begin{example}
    The Sequence ${U}_{n} = 10{U}_{n-2} - {U}_{n-4}$  for $n>4$ has the already mentioned characteristic polynomial equal to $x^4-10x^2+1$. 
    Setting ${U}_1 = 0$, ${U}_2=5$, ${U}_3=0$ and ${U}_4=49$ yields the formula 
    $$
    {U}_n = 
        \frac{1}{4}\left(\sqrt{2}+\sqrt{3}\right)^n
        +\frac{1}{4}\left(\sqrt{2}-\sqrt{3}\right)^n
        +\frac{1}{4}\left(-\sqrt{2}+\sqrt{3}\right)^n
        +\frac{1}{4}\left(-\sqrt{2}-\sqrt{3}\right)^n
    $$
    The sequence $\mathbf{U}$ by Theorem \ref{the_part} almost satisfies the Dold condition and by the Lemma \ref{lem_gcd} its repairing factor must divide $\gcd(1\cdot0,2\cdot 10, 3\cdot0, 4\cdot1) = 4$. Because $2\not|{U}_2 - {U}_1 = 5-1$ we have $2\ |\ \fail(\mathbf{U})\ |\ 4$.

\end{example}

\subsection{Sequences with $\Delta_{\mathbf U}\not=0$}

Given a sequence $\mathbf{U}$, let us construct a graph $G_\mathbf{U}$ with elements form the set $\{0, \ldots, d-1\}$ as its vertices. Between two vertices $k$ and $l$ we put an edge with label $\pp$ for prime ideal $\pp$ if and only if 
\begin{align}
    \exists_{i\in\N}: \ \alpha_s^{p^i}  \equiv \alpha_t \pmod \pp.
\end{align} 
Edges are not directed and between two vertices there may exist infinitely many edges. %There are $d$ vertices and exactly $d$ edges with given (prime) label. 

With $s \sim_\pp t$ we will denote that there exist an edge between $s$ and $t$ with label $\pp$. The relation $\sim_\pp$ is therefore an equivalence relation for any $\pp$.

\begin{lemma}\label{lem_infinEdge}
    If a sequence ${\mathbf U}$  satisfies the Dold condition, $\Delta_\mathbf{U} \not=0 $ and there exist infinitely many edges between vertices $s$ and $t$ in the graph $G_\mathbf{U}$ then $l_s=l_t$. 
\end{lemma}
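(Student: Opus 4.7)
The plan is to convert every edge label $\pp$ between $s$ and $t$ into a single congruence $N(l_s - l_t) \equiv 0 \pmod{\pp}$, where $N$ is a fixed integer clearing the denominators of all the $l_k$. Infinitely many such primes then force the element $N(l_s - l_t) \in \OO_K$ to lie in infinitely many prime ideals, which by the standard finiteness fact forces $N(l_s - l_t) = 0$, i.e.\ $l_s = l_t$.

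The first step sets up Frobenius. Because $K$ is the splitting field of $C_\mathbf{U}$ over $\Q$, the extension $K/\Q$ is Galois. Discard the finitely many primes $\pp$ that are ramified or divide $\Delta_\mathbf{U}$; infinitely many edge labels survive. For each surviving $\pp$ let $\sigma_\pp \in \gal(K/\Q)$ be the Frobenius at $\pp$, characterized by $\sigma_\pp(x) \equiv x^p \pmod{\pp}$ on $\OO_K$, so $\sigma_\pp^i(x) \equiv x^{p^i} \pmod{\pp}$. Since $\sigma_\pp$ fixes $\Q$ it permutes the roots $\alpha_0,\ldots,\alpha_{d-1}$; call the induced permutation $\pi_\pp$. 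The hypothesis $\pp \nmid \Delta_\mathbf{U}$ keeps the roots distinct modulo $\pp$, so the edge condition $\alpha_s^{p^i} \equiv \alpha_t \pmod{\pp}$ upgrades to the genuine equality $\sigma_\pp^i(\alpha_s) = \alpha_t$, i.e.\ $\pi_\pp^i(s) = t$.

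The second step applies the Dold condition. Chaining (p3) gives $U_{p^i r} \equiv U_r \pmod{\pp}$ for every $r \in \N_+$. Plug in $U_n = \sum_k l_k \alpha_k^n$, multiply by $N$ so all coefficients lie in $\OO_K$, and use $\alpha_k^{p^i} \equiv \alpha_{\pi_\pp^i(k)} \pmod{\pp}$ to land on
\[
\sum_{k=0}^{d-1}\bigl(Nl_{\pi_\pp^{-i}(k)} - Nl_k\bigr)\,\alpha_k^r \equiv 0 \pmod{\pp}
\]
for every $r \in \N_+$; the same identity holds at $r=0$ for free because $\pi_\pp^{-i}$ is a permutation of the index set. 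Evaluating at $r = 0,1,\ldots,d-1$ gives a linear system in the unknowns $Nl_{\pi_\pp^{-i}(k)} - Nl_k$ whose matrix is Vandermonde in $\alpha_0,\ldots,\alpha_{d-1}$ with determinant $\prod_{i<j}(\alpha_j-\alpha_i)$, nonzero modulo $\pp$ by the assumption $\pp \nmid \Delta_\mathbf{U}$. Hence all of those differences vanish mod $\pp$; specializing to $k = t$ (so that $\pi_\pp^{-i}(t) = s$) yields $N(l_s - l_t) \equiv 0 \pmod{\pp}$. Running this across the infinitely many surviving edges completes the argument.

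The main technical hurdle is the Frobenius setup: one needs the extension to be Galois, one must throw away ramified primes and primes dividing $\Delta_\mathbf{U}$, and one must exploit $\Delta_\mathbf{U} \neq 0$ twice — once to promote $\sigma_\pp^i(\alpha_s) \equiv \alpha_t \pmod{\pp}$ to exact equality of roots, and once to guarantee Vandermonde invertibility mod $\pp$. After that the argument is pure linear algebra and a counting argument on prime ideals.
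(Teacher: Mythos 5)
Your proof is correct, and it takes a genuinely different route from the paper's. The paper first pigeonholes the infinitely many edge labels down to an infinite subfamily $P$ of primes whose Frobenius elements all induce the same permutation $h$ of the roots, then averages the Dold congruences over a full Frobenius cycle to build an auxiliary sequence $V_n=\sum_{j}\alpha_j^n\,\frac{\sum_{k\in[j]}l_k}{|[j]|}$, deduces from the infinitude of $P$ that $\mathbf U=\mathbf V$ as exact sequences, and only then compares coefficients via a Vandermonde system over $K$. You instead work one prime at a time: for each surviving edge label $\pp$ you convert the chained Dold congruences $U_{p^ir}\equiv U_r\pmod{\pp}$ into the single relation $N(l_s-l_t)\equiv 0\pmod{\pp}$ by inverting a Vandermonde matrix modulo $\pp$ (legitimate because $\pp\nmid\Delta_\mathbf{U}$), and then finish with the fact that a nonzero element of $\OO_K$ lies in only finitely many prime ideals. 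This dispenses with both the pigeonhole step and the auxiliary averaged sequence; the price is that you must control the Vandermonde determinant modulo $\pp$ rather than over $K$, and you handle this correctly --- in particular, your observation that the $r=0$ row of the system holds identically (so you may use exponents $0,\dots,d-1$ and get determinant $\pm\sqrt{\Delta_\mathbf{U}}$ rather than $\pm r_d\sqrt{\Delta_\mathbf{U}}$) is exactly the right move, since the Dold condition by itself only supplies the rows with $r\geqslant 1$.
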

\begin{proof}
    Fix $s, t \in \{ 0, \ldots, d-1\}$. Let $P_0 := \{\pp: s\sim_\pp t, \pp \text{ not ramified} \}$ be infinite set of prime ideals of $\OO_K$. Each element of $P_0$ has associated the Frobenius automorphism $\Phi_\pp: \OO_K/\pp \to\OO_K/\pp$ with the property that $\Phi_\pp(x) \equiv x^p\pmod{\pp}$ for $x \in \OO_K$.
    
     $\Phi_\pp$ permutes the roots $\alphas$ in some way. There are finitely many such permutations and infinitely many primes, so there exist an infinite subset $P \subset P_0$ such that the Frobenius automorphism of each prime ideal of $P$ permutes the roots in the same way. Let $h:\{0,\ldots,d-1\}\to\{0,\ldots,d-1\}$ be this permutation.
    % \begin{align}\label{lem_infinEdge_permut}
    %     \alpha_i^p \equiv \alpha_{h(i)} \pmod \pp \quad\quad\text{for all $\pp\in P$,\ $0\!\leqslant\! i\!<\! d$}.
    % \end{align}

    To conclude, the construction of $P$ ensures that:
    \begin{enumerate}
        \item[(P1)] $P$ is infinite,
        \item[(P2)] for every $\pp\in P$ we have $s\sim_\pp t$,
        \item[(P3)] $\alpha_i^p \equiv \alpha_{h(i)} \pmod \pp $ for all $\pp\in P$,\ $0\!\leqslant\! i\!<\! d$.
    \end{enumerate}

    Notice that by property $(P3)$ the relation $\sim_\pp$ is the same for every $\pp\in P$. 
    By $[i]$ we will denote the equivalence class of element $i$ with respect to to any relation $\sim_\pp$ for $\pp\in P$. The set $[i]$ can be also seen as a cycle of element $i$ in permutation $h$. By (P2) we know that $s$ and $t$ are in the same class i.e. $[s] = [t]$.

    Since $\textbf{U}$ satisfies the Dold condition, we have $U_{p^ks} \equiv U_{p^{k-1}s} \pmod \pp$ for every $k, s\in\N_+$. By chaining these equivalences we get 
    \begin{align}\label{lem_infinEdge_e2}
        U_{s} \equiv U_{p^{k}s} \pmod \pp .
    \end{align}
    Let $e:=\lcm\left(|[0]|, |[1]|, \ldots,  |[d-1]|\right)$ and $\pp$ be a prime ideal from $P$. Then
    \begin{equation}
        \begin{split}
        eU_s \overset{(\ref{lem_infinEdge_e2})}{\equiv}
        \sum_{i=1}^{e}U_{p^is} \equiv
        \sum_{i=1}^{e}  \sum_{j=0}^{d-1} l_j\alpha_{j}^{p^is} \equiv
        \sum_{j=0}^{d-1} l_j\sum_{i=1}^{e}  \alpha_{j}^{p^is} \overset{(P3)}{\equiv}
        \sum_{j=0}^{d-1} l_j \frac{e}{|[j]|}\sum_{k\in [j]}  \alpha_{k}^{s} \equiv\\
        \equiv \sum_{0\leqslant j,k<d,\, k\sim_\pp j}l_j\alpha_{k}^{s} \frac{e}{|[j]|}\equiv
        \sum_{k=0}^{d-1} \alpha_k^s \frac{e}{|[k]|}\sum_{j\in [k]}  l_j \equiv
        e\sum_{k=0}^{d-1} \alpha_k^s \frac{\sum_{j\in [k]}l_j }{|[k]|}  
        \pmod \pp .
    \end{split}
    \end{equation}

    Let us define a sequence ${\mathbf V}$ as follows $V_n := \sum_{j=0}^{d-1} \alpha_j^n \frac{\sum_{k\in [j]}l_k }{|[j]|}$. The above formula states that 
    \begin{align}
        eU_s \equiv eV_s \pmod \pp \quad\quad \text{for every $s$ and $\pp\in P$}
    \end{align}
    The sequences ${\mathbf U}$ and ${\mathbf V}$ must be thus equal at every point. Analogously to the proof of Theorem \ref{theor_Un} it can be shown that the coefficients of sequences ${\mathbf U}$ and ${\mathbf V}$ are equal, in particular the coefficients of $\alpha_s$ and $\alpha_t$ are:
    \begin{align}
        l_s = \frac{\sum_{k\in [s]}l_k }{|[s]|} \qquad\text{and}\qquad   l_t = \frac{\sum_{k\in [t]}l_k }{|[t]|}.
    \end{align}
    Since $[s]=[t]$ the coefficients $l_s$, $l_t$ are equal.

\end{proof}

\begin{lemma}\label{lem_irred}
    % If a sequence ${\mathbf U}$ satisfies the Dold condition and $\Delta_{\mathbf U}\not=0$, then $l_s$ and $l_t$ must be equal if $\alpha_s$ and $\alpha_t$ are roots of the same irreducible factor of $C_{\mathbf U}$.  
    If $\alpha_i$ and $\alpha_j$ are roots of the same irreducible factor of $C_{\mathbf U}$, then there are infinitely many edges between $i$ and $j$ in the graph $G_\mathbf{U}$.
\end{lemma}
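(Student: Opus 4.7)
The plan is to invoke the Chebotarev density theorem applied to the splitting field $K=\Q(\alpha_0,\ldots,\alpha_{d-1})$. Recall that $K$ is Galois over $\Q$ with Galois group $G=\gal(K/\Q)$, and $G$ acts on the set $\{\alpha_0,\ldots,\alpha_{d-1}\}$; the orbits of this action correspond exactly to the irreducible factors of $C_{\mathbf U}$ over $\Q$.

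First, using the hypothesis that $\alpha_i$ and $\alpha_j$ belong to the same irreducible factor of $C_{\mathbf U}$, I will produce an element $\sigma\in G$ with $\sigma(\alpha_i)=\alpha_j$. This is immediate from transitivity of the Galois action on the roots of an irreducible factor.

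Next, I will invoke Chebotarev: the set of (unramified) prime ideals $\pp\subset \OO_K$ whose Frobenius automorphism $\Phi_\pp$ equals $\sigma$ has positive density, and in particular is infinite. For each such prime $\pp$ lying over a rational prime $p$, the defining property of the Frobenius gives
\[
    \alpha_i^p \equiv \Phi_\pp(\alpha_i) = \sigma(\alpha_i) = \alpha_j \pmod{\pp}.
\]
Thus the edge-condition in the definition of $G_{\mathbf U}$ (with exponent $p^1$) is satisfied, and so the pair $\{i,j\}$ is joined by an edge labelled $\pp$. As $\pp$ ranges over the infinitely many primes of $\OO_K$ with $\Phi_\pp=\sigma$, we obtain infinitely many edges between $i$ and $j$, as required.

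The only subtle point is the existence of the Frobenius: a prime $\pp$ must be unramified over $\Q$ for $\Phi_\pp$ to be a well-defined element of $G$ satisfying $\Phi_\pp(x)\equiv x^p\pmod{\pp}$ for every $x\in\OO_K$. Since only finitely many primes of $\OO_K$ ramify, this costs only a finite set of exceptions and does not affect the conclusion. I expect no essential obstacle beyond citing Chebotarev's density theorem and the standard properties of the Frobenius in a Galois extension.
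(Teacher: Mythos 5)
Your proposal is correct, and it is a genuinely more direct route than the one taken in the paper. You apply Chebotarev head-on: transitivity of $\gal(K/\Q)$ on the roots of an irreducible factor produces a $\sigma$ with $\sigma(\alpha_i)=\alpha_j$, and the positive-density set of unramified primes $\pp$ with $\Phi_\pp=\sigma$ immediately yields infinitely many edges, each realized already at the first Frobenius power $\alpha_i^{p}\equiv\alpha_j\pmod{\pp}$. The paper instead argues indirectly: it deletes an arbitrary finite set of edges, shows that for each connected component $D$ of the pruned graph the elementary symmetric functions $E_k$ of $\{\alpha_k:k\in D\}$ are fixed by every remaining Frobenius, deduces via an appendix result (itself a Chebotarev application: an irreducible integer polynomial with a root modulo almost every prime is linear) that the $E_k$ are rational integers, so that $\prod_{k\in D}(x-\alpha_k)\in\Z[x]$ is a factor of $C_{\mathbf U}$ and no irreducible factor can straddle two components; since the finite deletion was arbitrary, the edges must be infinite in number. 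Your argument is shorter and gives slightly more (a positive-density supply of edges coming from a single Frobenius application); the paper's component argument has the side benefit of exhibiting the integrality of the partial factors $V_D$, which is in the spirit of how the graph $G_{\mathbf U}$ is used elsewhere. One small point worth making explicit in a final write-up: for each rational prime $p$ in the Chebotarev set, the Frobenius elements of the various $\pp\mid p$ range over the whole conjugacy class of $\sigma$, so at least one $\pp$ above $p$ satisfies $\Phi_\pp=\sigma$ exactly; distinct such $p$ give distinct prime ideals $\pp$, hence genuinely infinitely many edge labels.
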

\begin{proof}
    
    % Let $0\leqslant i<j<d$ be any pair of indices and let $\approx_{A}$ be the transitive closure of relation $\sim_{A}$. Since $\approx_{A}$ is equivalence relation we can divide the set of all indices $\{0,\ldots,d-1\}$ into equivalence classes. \

    Let us remove finitely many edges from the graph $G_
    \mathbf{U}$.
    For every removed edge we add its label to the set $B$. We also add $\pp$ to the set $B$ if $\pp|disc(K)$. Therefore $B$ is a finite subset of prime ideals. Form now on we will consider only prime ideals that are in $P\setminus B$ where $P$ is the set of all prime ideals of $\OO_K$. 
    
    Let $D\subseteq \{0,\ldots,d-1\}$ be a connected component of the modified graph. We will show that $D$ must have a certain form.
    
    Notice that the Frobenius automorphism $\Phi_\pp$ for $\pp\in P\setminus B$ permutes the roots $\alphas$ such that:
    \begin{align}\label{lem_irred_e1}
        \text{for any index } \ s\in D\ \text{ if } \Phi_\pp(\alpha_s) \equiv \alpha_t \pmod \pp \ \text{ for some}\ t \text{, then } \ t\in D
    \end{align}

    Let $e_k(x_0,\ldots,x_t)=\sum_{1\leq i_1<\ldots <i_k\leq t}\alpha_{i_1},\cdots \alpha_{i_k}$ be the $k$-th elementary symmetric polynomial. 
    With $E_k$ we will denote
    \begin{align}\label{e_k_def}
        E_k := e_k(\alpha_{i_1}, \ldots, \alpha_{i_t}) \qquad\text{for $\{i_1,\ldots,i_t\}=D$}
    \end{align}
    By (\ref{lem_irred_e1}) we have
    \begin{align} \label{lem_irred_Eq_fix}
        \Phi_\pp(E_k) \equiv E_k \pmod \pp \qquad\text{for any $\pp\in P\setminus B$}.
    \end{align}
    Since $\Phi_\pp$ is a generator of the Galois group $H:=\gal\left((\OO_K/\pp)/ (\Z/p)\right)$, every $\sigma\in H$ must have the form $\sigma=\Phi_\pp^k$ for some $k$. If some $x\in \OO_K/\pp$ is a fixed point of $\Phi_\pp$, then it must be a fixed point of every automorphism in $H$ which implies that $x$ is in fact in $\Z/p$.  

    This is exactly the case for $E_k$. By (\ref{lem_irred_Eq_fix}) it is a fixed point of the Frobenius automorphism. Therefore, the reduction of $E_k$ modulo $\pp$ must be in $\Z/p$.
    Hence, there exists $n_{k,\pp}\in\Z$ such that $E_k \equiv n_{k,\pp} \pmod \pp $. Let $W_k(x)$ be the minimal polynomial of $E_k$. We can see that $W_k(n_{k,\pp})\equiv 0\pmod \pp$ for any $\pp\in P\setminus B$. Thus the polynomial $W_k(x)$ has a root modulo $\pp$ for almost every $\pp\in P$. This means that polynomial $W_k(x)$ must be linear (see Appendix). 
    By definition $E_k$ is a root of $W_k(x)$, so $E_k \in \Q$ and by (\ref{e_k_def}) $E_k$ can be written as a polynomial of algebraic integers with integer coefficients, so $E_k\in \Z$.

    Let $V_D(x)$ be a polynomial defined as
    \begin{align}
        V_D(x) := \prod_{k\in D} (x-\alpha_k) = \sum_{k=0}^{|D|}(-1)^k E_kx^{|D|-k}.
    \end{align}
    $V_D(x)$ has integer coefficients and by definition it is a non-costant factor of $C_{\mathbf U}$ - the characteristic polynomial of $\textbf{U}$. We can decompose polynomial $C_{\mathbf U}$ into irreducible, pairwise different factors: $C_{\mathbf U} = C_1C_2\cdots C_m$. $V_D$ must be a product of at least one of those irreducible factors and if $\alpha_i$ is a root of one of them, then it is a root of $V_D$ and $i\in D$.
    
    Let $D_1, D_2, \ldots, D_w$ be connected components of modified graph and let $\alpha_s$ and $\alpha_t$ be roots of an irreducible component $C_i$. Obviously $s$ must be a member of a component $D_j$ for some $j$. Then $C_i|V_{D_j}$, so every root of $C_i$ is in $D_j$, in particular $\alpha_t$.

    The choice of the finite set $B$ was arbitrary. No matter how many edges we remove form the graph $G_\mathbf{U}$. As long as we remove finite amount of them, then all roots of an irreducible factor will fall into the same connected component. In other words: if $\alpha_s$ and $\alpha_t$ are roots of the same irreducible factor, then there are infinitely many edges between $s$ and $t$ in the graph $G_\mathbf{U}$.

\end{proof}

% An immediate corollary of the above lemma is that if a sequence ${\mathbf U}$ satisfies the Dold condition and its characteristic polynomial factors as $C_{\mathbf U} = C_1C_2\cdots C_m$ then ${\mathbf U}$ must have the form:

% \begin{align}\label{eq_general_form}
%     U_n = L_1\cdot\sum_{C_1(\beta)=0} \beta^n   \ \ +\ \cdots\ + \ \
%     L_m\cdot\sum_{C_m(\beta)=0} \beta^n      
% \end{align}
% The converse is also true:

% We do not need to restrict ourselves to sequences that are \nice. The statement of the Theorem \ref{theor_Un} will still be true if we replace the \nice condition with $\Delta_{\mathbf U}\not=0$, although its proof will be more complicated. 

% When the polynomial $C_U = \prod_{i=0}^{d-1} (x-\alpha_i)$ is irrecucible then every prime of $\Z$ decomposes into exactly one prime ideal of $\OO_K$, and there are no ramified primes.
% $$ p\OO_K = \pp$$

% An immediate corollary of the above lemma is that if a sequence ${\mathbf U}$ satisfies the Dold condition and its characteristic polynomial factors as $C_{\mathbf U} = C_1C_2\cdots C_m$ then ${\mathbf U}$ must have the form:

% \begin{align}\label{eq_general_form}
%     U_n = L_1\cdot\sum_{C_1(\beta)=0} \beta^n   \ \ +\ \cdots\ + \ \
%     L_m\cdot\sum_{C_m(\beta)=0} \beta^n      
% \end{align}
% The converse is also true:

We can finally apply the two above lemmas to the case of $\Delta_\mathbf{U} \not=0$.

\begin{theorem}\label{theor_general_form}
    If $\Delta_{\mathbf U}\not=0$, then ${\mathbf U}$ almost satisfies the Dold condition if and only if ${\mathbf U}$ has the form %(\ref{eq_general_form}) 
    \begin{align}\label{eq_general_form}
        \mathbf U_n = l_1\cdot\sum_{C_1(\beta)=0} \beta^n   \ \ +\ \cdots\ + \ \
        l_m\cdot\sum_{C_m(\beta)=0} \beta^n ,   
    \end{align}
    where $C_1, C_2, \ldots, C_m$ are irreducible factors of $C_{\mathbf U}$ and $l_1, l_2, \cdots, l_m \in K$.
\end{theorem}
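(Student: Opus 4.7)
The plan is to prove the biconditional in two parts, with most of the genuine work already concentrated in Lemmas~\ref{lem_irred} and~\ref{lem_infinEdge}. The backward direction will be a short assembly of Corollary~\ref{c_sum}.

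For ($\Leftarrow$), I would factor $C_{\mathbf U} = C_1\cdots C_m$ into distinct monic irreducibles in $\Z[x]$ and introduce the power-sum sequences $V^{(i)}_n := \sum_{C_i(\beta)=0}\beta^n$; each $\mathbf{V}^{(i)}$ is integer-valued by Newton's identities and satisfies a linear recursion whose characteristic polynomial is the irreducible $C_i$. Corollary~\ref{c_sum} then supplies positive integers $N_i$ with $N_i\mathbf{V}^{(i)}$ satisfying the Dold condition. Since the sequences $\mathbf{V}^{(i)}$ have pairwise coprime characteristic polynomials they are linearly independent over $\Q$, so integrality of $\mathbf{U}$ forces the coefficients $l_i$ in the given form to be rational. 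Picking a single integer $N$ that clears all denominators of the $l_i$ and is divisible by every $N_i$ yields the identity $N\mathbf{U} = \sum_i (Nl_i)\mathbf{V}^{(i)}$, which exhibits $N\mathbf{U}$ as a $\Z$-linear combination of sequences satisfying the Dold condition. Since the Möbius sum in (r1) is $\Z$-linear in the sequence, $N\mathbf{U}$ satisfies the Dold condition, so $\mathbf{U}$ almost satisfies it.

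For ($\Rightarrow$), I assume that some $c\in\N_+$ makes $c\mathbf{U}$ satisfy the Dold condition. Because $c\mathbf{U}$ has the same characteristic polynomial as $\mathbf{U}$, its associated graph $G_{c\mathbf{U}}$ coincides with $G_{\mathbf U}$, its discriminant is still $\Delta_{\mathbf U}\neq 0$, and its closed form reads $c\mathbf{U}_n = \sum_j (cl_j)\alpha_j^n$. For any two roots $\alpha_s,\alpha_t$ of the same irreducible factor $C_i$, Lemma~\ref{lem_irred} yields infinitely many edges between $s$ and $t$ in $G_{\mathbf U}$, and Lemma~\ref{lem_infinEdge} applied to the sequence $c\mathbf{U}$ then forces $cl_s = cl_t$, hence $l_s = l_t$. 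Writing $l^{(i)}$ for this common value across the roots of $C_i$ and regrouping the closed form of $\mathbf{U}$ by irreducible factor produces exactly~\eqref{eq_general_form}.

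The real obstacle has already been met in Lemmas~\ref{lem_irred} and~\ref{lem_infinEdge}: extracting infinitely many Frobenius-based edges between Galois-conjugate roots via a minimal-polynomial analysis of the relevant elementary symmetric functions, and converting infinitely many edges into equality of coefficients through a Vandermonde-type argument. By contrast, at the level of this theorem the only checks are that $G_{\mathbf U}$ and $\Delta_{\mathbf U}$ are invariants of $C_{\mathbf U}$ alone, so the two lemmas apply unchanged to $c\mathbf{U}$, and that the Dold condition is preserved under integer linear combinations; both are immediate.
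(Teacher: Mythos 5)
Your proposal is correct and follows essentially the same route as the paper: the forward direction applies Lemma~\ref{lem_irred} and Lemma~\ref{lem_infinEdge} to the rescaled sequence $c\mathbf{U}$ (whose graph and discriminant coincide with those of $\mathbf{U}$) to equate coefficients within each irreducible factor, and the backward direction assembles Corollary~\ref{c_sum} with $\Z$-linearity of the Dold condition. Your extra remarks on the rationality of the $l_i$ and on clearing denominators merely flesh out what the paper dismisses with ``the result follows easily.''
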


\begin{proof}

$(\Rightarrow)$. There exists a non-zero constant $N\in\Z$ such that $N\mathbf{U}$ satisfies the Dold condition. Let $\alpha_s$ and $\alpha_t$ be two roots of the same irreducible factor of $C_{\mathbf{U}}$. By Lemma \ref{lem_irred} there are infinitely many edges between vertices $s$ and $t$ in the graph $G_{(N\mathbf{U})}$ and by Lemma \ref{lem_infinEdge} the coefficients $Nl_s$ and $Nl_t$ must be equal. This clearly implies that the sequence $\mathbf{U}$ must have the form (\ref{eq_general_form}).
% Let us take all the pairs of indices in the graph $G$ that are connected by finitely many edges and remove them constructing graph $G'$. By the above $G'$ is connected and if an edge connects two vertices, then there exists infinitely many edges that connects these vertices. 

% Again, let $\alpha_s, \alpha_t$ be roots of the same irreducible factors of $C_U$. Let $s=v_0,v_1,\ldots,v_k=t$ be a path in graph $G'$. For consecutive vertices $v_i$ and $v_{i+1}$ there exists infinitely many edges connecting them so the Lemma \ref{lem_infinEdge} applies and $l_{v_i} = l_{v_{i+1}}$. Therefore, for any two indices $s, t$:
% \begin{align}
%     l_s = l_{v_0} = l_{v_1} = \cdots = l_{v_{k}} = l_t
% \end{align}
$(\Leftarrow)$  We have already seen (corollary \ref{c_sum}) that $  V^{(i)}_n := \sum_{C_i(\beta)=0} \beta^n$ satisfies the Dold condition. We also now that if some sequences satisfy the Dold condition, then their sum does as well. The result follows easily.
        
\end{proof}

With the above theorem we have proved the implication "$\Rightarrow$" in Theorem \ref{the_part}.

\begin{corollary}[the second part of the proof of Theorem \ref{the_part} ] 

If a sequence $\mathbf{U}$ almost satisfies the Dold condition then $l_0=\cdots=l_{d-1}$.
\end{corollary}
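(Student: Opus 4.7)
The plan is to deduce this corollary directly from Theorem \ref{theor_general_form}, since the irreducible case is really just the one-factor specialization of the structural result already proved there. First I would verify that the running hypothesis of this subsection, namely that $C_\mathbf{U}$ is irreducible over $\Q$, puts us inside the scope of Theorem \ref{theor_general_form}. Since we are in characteristic zero, an irreducible polynomial over $\Q$ is automatically separable, so the roots $\alpha_0, \ldots, \alpha_{d-1}$ are pairwise distinct and $\Delta_\mathbf{U} \neq 0$.

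Next I would apply Theorem \ref{theor_general_form} with $m = 1$ and $C_1 = C_\mathbf{U}$. Since $\mathbf U$ is assumed to almost satisfy the Dold condition, the theorem produces some $l \in K$ with
$$U_n = l \cdot \sum_{i=0}^{d-1} \alpha_i^n \qquad \text{for all } n \in \N_+.$$

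Finally, I would compare this expression to the standard expansion $U_n = \sum_{i=0}^{d-1} l_i \alpha_i^n$ and invoke uniqueness of the coefficients. Since the roots are pairwise distinct, the Vandermonde matrix with entries $\alpha_j^i$ is invertible, so the coefficients $l_0, \ldots, l_{d-1}$ are uniquely determined by the first $d$ terms of the sequence (this is exactly the linear-algebra step that appeared at the end of the proof of Lemma \ref{theor_Un}). Matching the two expressions term by term forces $l_0 = l_1 = \cdots = l_{d-1} = l$, which is what we want.

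I do not anticipate a substantive obstacle in this corollary: the genuine content sits inside Theorem \ref{theor_general_form} and its supporting Lemmas \ref{lem_infinEdge} and \ref{lem_irred}, where the graph $G_\mathbf U$ and the Frobenius-stability argument handle connectivity of roots within each irreducible factor. Once those are in place, the irreducible case collapses to a single connected component, and the corollary is a one-line consequence.
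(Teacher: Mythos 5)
Your proposal is correct and follows essentially the same route as the paper, which also obtains this corollary by specializing Theorem \ref{theor_general_form} to the case of a single irreducible factor. Your added details (that irreducibility over $\Q$ forces $\Delta_\mathbf{U}\neq 0$, and that the Vandermonde argument gives uniqueness of the coefficients $l_i$) are exactly the points the paper leaves implicit, and they are handled correctly.
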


% At this point we can give another estimate for the repair factor of ${\mathbf U}$ for which $C_{\mathbf U}$ is irreducible. Firstly by the recursion formula for $U_n$:
% $$U_n = r_1U_{n-1}+ r_2U_{n-2} + \cdots + r_dU_0$$
% so the value of $r_dU_0$ must be an integer. $r_d$ can be also seen to be product of all roots of $C_{\mathbf U}$.
% The value of $U_0$ can be calculated in other way:
% $$ U_0 = L_1\sum_{i=0}^d \alpha^0_i = L_1\cdot d.$$
% Combining those: $L_1 = \frac{U_0}{d} = \frac{U_0r_d}{dr_d}$ so $dr_dL_1$ is an algebraic integer and by Theorem \ref{theor_fail} $\fail({\mathbf U})|dr_d$.

All our previous bounds on the $\fail$ factor required that $\deg C_\mathbf{U} = 2$ or $C_\mathbf{U}$ is irreducible. None of these assumption apply in this case. We are forced to derive another estimation, this time unfortunately much weaker.

\begin{lemma}\label{lem_fail_del}
    If $\Delta_\mathbf{U} \not= 0$ and the sequence $\mathbf{U}$  almost satisfies the Dold condition, then $\fail(\mathbf{U}) | r_d\Delta_\mathbf{U}$.
\end{lemma}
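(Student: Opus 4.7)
The plan is to combine Theorem~\ref{theor_general_form} with a Vandermonde / Cramer computation. Theorem~\ref{theor_general_form} gives a decomposition
\begin{align*}
U_n = \sum_{i=1}^m l_i V^{(i)}_n, \qquad V^{(i)}_n := \sum_{C_i(\beta)=0}\beta^n,
\end{align*}
over the distinct irreducible factors $C_1,\ldots,C_m$ of $C_\mathbf{U}$. Each $\mathbf{V}^{(i)}$ is integer-valued (as a symmetric polynomial in the roots of $C_i$) and satisfies the Dold condition by Corollary~\ref{c_sum}. Any $\sigma\in\gal(K/\Q)$ permutes the roots of each $C_i$ among themselves and hence fixes every $V^{(i)}_n$; applying $\sigma$ to the identity above and invoking linear independence of the $\mathbf{V}^{(i)}$'s (valid because $\Delta_\mathbf{U}\ne 0$ makes the $\alpha_j$'s distinct) forces $\sigma(l_i)=l_i$, so $l_i\in\Q$. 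It therefore suffices to prove $r_d\Delta_\mathbf{U}\,l_i\in\Z$ for every $i$: then $r_d\Delta_\mathbf{U}\mathbf{U}=\sum_i (r_d\Delta_\mathbf{U}\,l_i)\mathbf{V}^{(i)}$ is a $\Z$-linear combination of Dold-satisfying sequences, and the lemma follows.

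To bound the denominators of the $l_i$'s, I refine to the finer basis of individual roots: $U_n=\sum_{j=0}^{d-1} c_j\alpha_j^n$, with $c_j = l_{[j]}\in\Q$ (where $[j]$ denotes the irreducible factor containing $\alpha_j$). Rather than inverting this linear system at $n=0,\ldots,d-1$ — which would involve $U_0$, and $U_0$ extended backward from the recurrence lies only in $\tfrac{1}{r_d}\Z$ — I use the shifted indices $n=1,\ldots,d$. The resulting coefficient matrix $W$ with $W_{n-1,j}=\alpha_j^n$ factors as the standard Vandermonde matrix times $\operatorname{diag}(\alpha_0,\ldots,\alpha_{d-1})$, so by Vieta
\begin{align*}
\det W = \pm\sqrt{\Delta_\mathbf{U}}\cdot\prod_{j=0}^{d-1}\alpha_j = \pm r_d\sqrt{\Delta_\mathbf{U}}.
\end{align*}
Since $\adj(W)$ has entries in $\OO_K$ and $(U_1,\ldots,U_d)^\top\in\Z^d$, Cramer's rule gives $r_d\sqrt{\Delta_\mathbf{U}}\,l_i\in\OO_K$ for each $i$.

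Squaring yields $r_d^2\Delta_\mathbf{U}\,l_i^2\in\OO_K\cap\Q=\Z$. Writing $l_i=a_i/b_i$ in lowest terms, this gives $b_i^2\mid r_d^2\Delta_\mathbf{U}$; inspecting $p$-adic valuations prime by prime yields $v_p(b_i)\le v_p(r_d)+\tfrac{1}{2}v_p(\Delta_\mathbf{U})\le v_p(r_d\Delta_\mathbf{U})$ for every prime $p$, so $b_i\mid r_d\Delta_\mathbf{U}$, i.e.\ $r_d\Delta_\mathbf{U}\,l_i\in\Z$, as required. The main point to get right, I expect, is the appearance of the extra factor $r_d$ rather than just $\Delta_\mathbf{U}$: it reflects exactly the fact that extending the recurrence to $n=0$ gives $U_0\in\tfrac{1}{r_d}\Z$, which is absorbed by the index shift at the cost of the factor $\prod\alpha_j=\pm r_d$ in the Vandermonde-like determinant; the rest is bookkeeping with Galois and $p$-adic valuations.
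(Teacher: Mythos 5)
Your proof is correct and follows essentially the same route as the paper: the decomposition from Theorem~\ref{theor_general_form}, Corollary~\ref{c_sum} for the power-sum sequences, and the shifted Vandermonde system with $\det W=\pm r_d\sqrt{\Delta_\mathbf{U}}$ giving $r_d\sqrt{\Delta_\mathbf{U}}\,l_i\in\OO_K$. The only divergence is at the last step, where you establish $l_i\in\Q$ via Galois and then square to land in $\Z$, while the paper simply multiplies by the algebraic integer $\sqrt{\Delta_\mathbf{U}}$ to get $r_d\Delta_\mathbf{U}l_i\in\OO_K$ and works with the prime-ideal form (p3) of the Dold condition; both finishes are valid.
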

\begin{proof}

We consider the equation

\begin{align}
    \begin{bmatrix}
        \alpha_0^{1} & \alpha_1^{1} & \cdots & \alpha_{d-1}^{1}\\
        \alpha_0^{2} & \alpha_1^{2} & \cdots & \alpha_{d-1}^{2}\\
        \vdots & \vdots & \ddots & \vdots \\
        \alpha_0^{d} & \alpha_1^{d} & \cdots & \alpha_{d-1}^{d}
    \end{bmatrix}
    \cdot
    \begin{bmatrix}
        l_0 \\ l_1 \\ \vdots \\ l_{d-1}
    \end{bmatrix}
    =
    \begin{bmatrix}
        U_1 \\  U_2 \\ \vdots \\  U_{d}
    \end{bmatrix}
\end{align}

The square matrix is invertible because it is a Vandermonde matrix with different columns, where $i$-th column is multiplied with $\alpha_i$. Let us denote this matrix by $M$. Let $\adj M$ be the adjugate of $M$. Then  $M\adj M  = \det M \cdot Id$ where $Id$ is the identity matrix of appropriate order. The following is true

\begin{align}\label{eq_li}
    \begin{bmatrix}
        l_0 \\ l_1 \\ \vdots \\ l_{d-1}
    \end{bmatrix}
    =
    \frac{1}{\det M}
    \cdot
    \adj M
    \cdot
    \begin{bmatrix}
         U_1 \\  U_2 \\ \vdots \\  U_{d}
    \end{bmatrix}
\end{align}

The numbers $ U_1, \ldots  U_{d}$ are all integers and the entries of the matrix $\adj M$ are polynomials of $\alphas$ with integer coefficients. From (\ref{eq_li}) we conclude that the numbers $l_i$ are of the form $l_i = \frac{1}{\det M}W_i(\alphas)$ for some polynomials $W_i\in\Z[X_0,\ldots ,X_{d-1}]$.

By definition, $\Delta_{\mathbf U} = \prod_{0\leqslant i<j<d}(\alpha_i-\alpha_j)^2$. Hence $\Delta_{\mathbf U}$ is a symmetric polynomial in $\alpha_0,\ldots,\alpha_{d-1}$, so the value of $\Delta_{\mathbf U}$ is an integer. From the formula for the determinant of the Vandermonde matrix we see that $\det M = \pm \alpha_0\cdots\alpha_{d-1} \sqrt{\Delta_{\mathbf U}} =  \pm r_d \sqrt{\Delta_{\mathbf U}}$.

The coefficients $l_i$ must therefore satisfy the following
\begin{align} \label{eq_li2}
    l_i = \frac{1}{r_d\sqrt{\Delta_\mathbf{U}}}W_i(\alphas)
\end{align}

If a sequence $\mathbf{U}$ almost satisfies the Dold condition, then by Theorem \ref{theor_general_form} it has the form 

\begin{align}
        U_n = l_1\cdot\sum_{C_1(\beta)=0} \beta^n   \ \ +\ \cdots\ + \ \
        l_m\cdot\sum_{C_m(\beta)=0} \beta^n ,   
\end{align}
where $C_1, C_2, \ldots, C_m$ are irreducible factors of $C_{\mathbf U}$ and $l_1, l_2, \cdots, l_m \in K$. By equation (\ref{eq_li2}) the denominators of $l_i$ must divide $\Delta_\mathbf{U}r_d$ so $l_i\Delta_\mathbf{U}r_d \in \OO_K$. By Corollary \ref{c_sum} the sequence ${W}^{(i)}_n := \sum_{C_i(\beta)=0} \beta^n$ satisfies the Dold condition for every $i$. The sequence $\mathbf{W}^{(i)}$ multiplied by an algebraic integer also satisfies the Dold condition and sum of the sequences that satisfy it also satisfies it. With that in mind it is easy to see that 

\begin{align}
        \Delta_\mathbf{U}r_d U_n = \Delta_\mathbf{U}r_dl_1\cdot\sum_{C_1(\beta)=0} \beta^n   \ \ +\ \cdots\ + \ \
        \Delta_\mathbf{U}r_dl_m\cdot\sum_{C_m(\beta)=0} \beta^n ,   
\end{align}
so $\Delta_\mathbf{U}r_d\mathbf U$ satisfies the Dold condition.

\end{proof}

Looking at the above theorem and the estimation of the $\fail$ factor in the irreducible case one might think that the estimation might be improved by just adding $r_d\Delta_\mathbf{U}$ term to the greatest common divisor formula. But this in fact changes nothing because $\Delta_\mathbf{U}$ can be expressed as linear combination of $ir_i$ for $1\leqslant i\leqslant d$. 

$\Delta_\mathbf{U}$ is the discriminant of the characteristic polynomial $C_\mathbf{U}$. We already used Vandermonde matrix to calculate this value but there is also another formula (which is in fact the definition) for the discriminant, namely $\Delta_\mathbf{U} = \res(C_\mathbf{U}, C'_\mathbf{U})$ where $\res$ is the resultant of polynomials. This can be expressed as a determinant of a certain $2d-1\times 2d-1$ matrix. For example, for $d=4$ the mentioned matrix for $C_\mathbf{U}(x) = x^4-r_1x^3-r_2x^2-r_3x -r_4$ is the following.

\begin{align}
    \begin{bmatrix}
        -r_4    &0      &0         &-r_3   &0      &0      &0      \\
        -r_3    &-r_4   &0         &-2r_2  &-r_3   &0      &0      \\
        -r_2    &-r_3   &-r_4      &-3r_1  &-2r_2  &-r_3   &0      \\
        -r_1    &-r_2   &-r_3      &4      &-3r_1  &-2r_2  &-r_3   \\
        1       &-r_1   &-r_2      &0      &4      &-3r_1  &-2r_2  \\
        0       &1      &-r_1      &0      &0      &4      &-3r_1  \\
        0       &0      &1         &0      &0      &0      &4      
    \end{bmatrix}.
\end{align}

If we multiply the third (in general $d-1$-st) column by 4 (in general $d$) and subtract the result from the last column the determinant will not change but the matrix will have the following form

\begin{align}
    \begin{bmatrix}
            &-r_4   &0      &0        &-r_3   &0      &0      &0\\
            &-r_3   &-r_4   &0        &-2r_2  &-r_3   &0      &0\\
            &-r_2   &-r_3   &-r_4     &-3r_1  &-2r_2  &-r_3   &4r_4\\
            &-r_1   &-r_2   &-r_3     &4      &-3r_1  &-2r_2  &3r_3\\
            &1      &-r_1   &-r_2     &0      &4      &-3r_1  &2r_2\\
            &0      &1      &-r_1     &0      &0      &4      &r_1\\
            &0      &0      &1        &0      &0      &0      &0
    \end{bmatrix}.
\end{align}

Now we can see that each nonzero element of the last column is of the form $ir_i$ so the determinant of this matrix (which is the discriminant of $C_\mathbf{U}$) is a linear combination of these terms. Hence,
\begin{align}
    \gcd\left(r_1,2r_2, \ldots , dr_d, \Delta_\mathbf{U}\right) 
    \quad = \quad   
    \gcd\left(r_1,2r_2, \ldots, dr_d\right)
\end{align}

\subsection{Arbitrary Sequences}

Up until this point we almost always considered sequences $\mathbf{U}$ with $\Delta_{\mathbf{U}} \not= 0$. It simplifies calculations and the exact formula for the sequence  $\mathbf{U}$. With all the lemmas already established it is not hard to classify which sequences (with $\Delta = 0$) satisfy the Dold condition. 
% Firstly, we need a version of the Lemma \ref{lem_infinEdge} that also works for the case of $\Delta_\mathbf{U} = 0$. 

In general, if a sequence $\mathbf{U}$ is defined by linear recursion then $U_n$ can be written as
\begin{align}
    U_n = \sum_{i=0}^{d-1} L_i(n)\alpha_i^n,
\end{align}
where for each $i\in\{0,\ldots ,d-1\}$, $L_i(x) \in K[x]$ is a polynomial of degree less than the multiplicity of $\alpha_i$ as a root of the polynomial $C_{\mathbf{U}}$.

\begin{lemma}\label{lem_poly_zero}
    If for some polynomials $L_0(x), \ldots, L_{d-1}(x) \in \Z[x]$ we have\linebreak ${V_n} := \sum_{i=0}^{d-1}L_i(n)\alpha_i^n = 0$ for every $n$ then the polynomials $L_i(x)$ are all zero. 
\end{lemma}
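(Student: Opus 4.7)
My plan is to prove the lemma by induction on the number of distinct bases appearing in the sum. Implicit in the statement is that the $\alpha_i$ appearing in the expression for $V_n$ should be viewed as pairwise distinct (otherwise the conclusion fails immediately, since one could cancel $L_0 = 1$ against $L_1 = -1$ whenever $\alpha_0 = \alpha_1$), so after absorbing repeated bases into a single polynomial coefficient I may assume $\alpha_i \neq \alpha_j$ for $i \neq j$. I will also assume each $\alpha_i \neq 0$; the degenerate case $\alpha_i = 0$ contributes only at $n = 0$ and can be handled separately.

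The main tool is the shift-and-subtract operator $T$ defined on sequences by $(TV)_n := V_{n+1} - \alpha_0 V_n$. A direct calculation gives
\[
(TV)_n = \alpha_0\bigl(L_0(n+1) - L_0(n)\bigr)\alpha_0^n + \sum_{i=1}^{d-1}\bigl(\alpha_i L_i(n+1) - \alpha_0 L_i(n)\bigr)\alpha_i^n.
\]
The new polynomial coefficient of $\alpha_0^n$ has degree strictly less than $\deg L_0$ (finite differencing drops the degree by one), while for $i \neq 0$ the leading coefficient of $\alpha_i L_i(n+1) - \alpha_0 L_i(n)$ equals $(\alpha_i - \alpha_0)$ times the leading coefficient of $L_i$, which is nonzero because the bases are distinct. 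Iterating $T$ a total of $\deg L_0 + 1$ times therefore annihilates the $\alpha_0$ term completely, while the remaining terms retain their degrees (their leading coefficients merely multiplied by the nonzero factor $(\alpha_i - \alpha_0)^{\deg L_0 + 1}$). The resulting identity involves only $d-1$ distinct bases, so by the inductive hypothesis each surviving polynomial is identically zero, forcing $L_1 = \cdots = L_{d-1} = 0$. Substituting back into the original expression yields $L_0(n)\alpha_0^n = 0$ for all $n$, and since $\alpha_0 \neq 0$ this gives $L_0(n) = 0$ on $\N_+$, whence $L_0$ is the zero polynomial (having infinitely many roots).

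The base case, with only one base $\alpha$, is precisely the statement that a polynomial vanishing on all positive integers is the zero polynomial. The only real bookkeeping concern is verifying that iterating $T$ preserves the degrees of the non-$\alpha_0$ terms together with their nonvanishing leading coefficients, but this follows inductively from the single-step analysis above. As an alternative line of attack, one could rewrite the hypothesis as a generating-function identity of the form $\sum_{i} P_i(z)/(1-\alpha_i z)^{\deg L_i + 1} = 0$ for certain polynomials $P_i$ of bounded degree and invoke uniqueness of partial-fraction decomposition over $K(z)$; however, the inductive shift argument above is fully elementary and self-contained, so that is the route I would take.
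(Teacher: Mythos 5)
Your proof is correct, but it takes a genuinely different route from the paper's. You use the classical shift-and-subtract operator $(TV)_n = V_{n+1}-\alpha_0 V_n$ to kill one base at a time and induct on the number of distinct bases; the degree/leading-coefficient bookkeeping you describe is accurate (one application of $T$ sends $L_i$ to $\alpha_i L_i(n+1)-\alpha_0 L_i(n)$, whose leading coefficient is $(\alpha_i-\alpha_0)$ times the old one for $i\neq 0$, and strictly drops the degree for $i=0$). The paper instead argues arithmetically: it first extracts the constant terms $L_i(0)$ by reducing modulo infinitely many well-chosen primes $\pp$ (using $L_i(ps)\equiv L_i(0)\pmod{p}$ and the invertibility mod $\pp$ of a Vandermonde-type matrix $M_p$ whose columns are permuted by Frobenius), concludes $L_i(0)=0$, and then inducts on the coefficient index by dividing the sequence by $n^k$. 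Your argument is more elementary and self-contained — it needs no primes, no Frobenius, and no conditions on ramification or denominators — and it works verbatim over any field of characteristic zero; the paper's version has the advantage of reusing machinery already set up for the surrounding lemmas. Two small points worth making explicit if you write this up: the induction hypothesis must be stated for polynomials with coefficients in $K$ (or $\mathbb{C}$) rather than $\Z$, since after applying $T$ the coefficients involve the $\alpha_i$; and your observation that the $\alpha_i$ must implicitly be pairwise distinct (and nonzero) for the statement to be true as written is a legitimate correction — the paper's hypothesis needs the same reading, since its Vandermonde determinant also requires distinctness.
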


\begin{proof}
    This proof consists of two parts. In the first part we will show that with the conditions as in the statement of the lemma the constant terms of  the polynomials $L_i$ are zero. In the second part we will use the former one to prove the lemma.

    Let $p$ be any prime not ramified in the splitting field of $C_{\mathbf{U}}$ and not dividing denominators of coefficients of $L_i$. There are infinitely many such primes.
    Consider the following matrices

    \begin{align}
        M_p :=
        \begin{bmatrix}
            \alpha_0^{0} & \alpha_1^{0} & \cdots & \alpha_{d-1}^{0}\\
            \alpha_0^{p} & \alpha_1^{p} & \cdots & \alpha_{d-1}^{p}\\
            \vdots & \vdots & \ddots & \vdots \\
            \alpha_0^{(d-1)p} & \alpha_1^{(d-1)p} & \cdots & \alpha_{d-1}^{(d-1)p}
        \end{bmatrix}
        ,\quad
        M :=
        \begin{bmatrix}
            \alpha_0^{0} & \alpha_1^{0} & \cdots & \alpha_{d-1}^{0}\\
            \alpha_0^{1} & \alpha_1^{1} & \cdots & \alpha_{d-1}^{1}\\
            \vdots & \vdots & \ddots & \vdots \\
            \alpha_0^{d-1} & \alpha_1^{d-1} & \cdots & \alpha_{d-1}^{d-1}
        \end{bmatrix}
    \end{align}

    Since $p$ is not ramified, raising $\alphas$ to the power of $p$ permutes them $\pmod\pp$. One can see that viewing the columns of the matrix $M_p$ modulo $\pp$ they are permuted columns of the matrix $M$ so $\det M_p \equiv \pm\det M \pmod \pp$. If we now add a requirement to the prime $p$ that $p\not|\det M$ we get $\det M_p \not\equiv 0 \pmod \pp$.

    We can see that for every $s\in \N$ we have
    $$
        0 \equiv
        {V}_{ps} \equiv 
        \sum_{i=0}^{d-1}L_i(ps)\alpha_i^{ps} \equiv 
        \sum_{i=0}^{d-1}L_i(0)\alpha_i^{ps} =: {V}_{ps}'
        \pmod{\pp}
    $$

    Consider the congruence
    
    \begin{align}
        M_p
        \cdot
         \begin{bmatrix}
           L_0(0) \\ L_1(0) \\ \vdots \\ L_{d-1}(0)
        \end{bmatrix}
        \equiv
          \begin{bmatrix}
           {V}_{0}' \\ {V}_{p}' \\ \vdots \\ {V}_{(d-1)p}' 
        \end{bmatrix}
         \equiv
          \begin{bmatrix}
           0 \\ 0 \\ \vdots \\ 0
        \end{bmatrix}
        \pmod \pp.
    \end{align}

   The determinant of $M_p$ is non-zero modulo $\pp$. If then the multiplication of the matrix with a vector yields the zero vector, then the mentioned vector must also be zero. Hence $L_i(0) \equiv 0\pmod\pp$ for every $i$. This equivalence is true for infinitely many prime ideals $\pp$. Therefore $L_i(0) = 0$ for every $i$.

    This implies that the constant term of every polynomial $L_i$ is zero, which concludes the first part of the proof.

    For the second part we will prove that if the last $k\in\N$ terms of each polynomial $L_i$ are zero, then in fact the last $k+1$ terms are zero.  To prove that we take polynomials $L_i'(x) := \frac{L_i(x)}{x^k}$ and observe that the sequence $\mathbf{W}$ defined as 
    $$ {W}_n := \frac{{V}_n}{n^k} =  \sum_{i=0}^{d-1}L_i'(n)\alpha_i^n $$
    is also zero at each point. The sequence ${W}$ therefore satisfies the assumption of the lemma and therefore the constant terms of the polynomials $L_i'$ are zero. This immediately implies that the last $k+1$ terms of polynomials $L_i(x) = x^kL'_i(x)$ are also zero.

    %We can now apply this finding repeatedly to the sequence $\mathbf{W}$ and see that since last $0$ terms of polynomials $L_i$ are zero then last $1$ terms are zero.     That in turn means that last $2$ terms ares zero and 
    By induction it can be easily observed that polynomials $L_i$ must all be zero.    
    
    % . We can now define polynomials $L_i'(x) := \frac{L_i(x)}{x}$ and 

    % $$ \mathbf{W}_n := \frac{\mathbf{V}_n}{n} =  \sum_{i=0}^{d-1}L_i'(n)\alpha_i^n $$.

    % Notice that $\mathbf{W}$ is also zero at each point. We can apply the above thinking to $\mathbf{W}$ and conclude that the constant terms of polynomials $L_i'$ (which are equal to the terms near $x^1$ in $L_i$) are also zero. We can repeat this many more times showing that each term of polynomials $L_i$ is zero. 
\end{proof}

\begin{lemma}\label{lem_infinEdge2}
    If a sequence ${\mathbf U}$ satisfies the Dold condition and there exist infinitely many edges between vertices $s$ and $t$, then $L_s=L_t$ and $\deg L_s = \deg L_t = 0$.
\end{lemma}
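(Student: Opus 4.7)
The plan is to mirror the proof of Lemma \ref{lem_infinEdge}, but with the polynomial coefficients $L_j$ forcing us to invoke Lemma \ref{lem_poly_zero} in place of the Vandermonde argument at the end. First I would extract, by the same pigeonhole argument used there, an infinite set $P$ of unramified prime ideals of $\OO_K$ (each lying over a distinct rational prime $p$) satisfying: (P1) $P$ is infinite; (P2) $s\sim_\pp t$ for every $\pp\in P$; (P3) the Frobenius $\Phi_\pp$ acts on $\alphas$ via one fixed permutation $h$; and (P4) $p$ does not divide any denominator of a coefficient of $L_0,\ldots,L_{d-1}$. By (P2), $s$ and $t$ lie in the same cycle of $h$. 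Let $c_{[j]}$ be the length of the cycle containing $j$, and set $e:=\lcm(c_{[0]},\ldots,c_{[d-1]})$.

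Chaining the Dold congruences as in (\ref{lem_infinEdge_e2}), for each $\pp\in P$ and all $s,k\in \N_+$ one obtains $U_s\equiv U_{p^ks}\pmod{\pp}$. The crucial new observation is that, since $p\in\pp$ and by (P4) the coefficients of each $L_j$ are $\pp$-integral, we have $L_j(p^k s)\equiv L_j(0)\pmod{\pp}$ whenever $k\geqslant 1$. Combined with $\alpha_j^{p^k s}\equiv\alpha_{h^k(j)}^s\pmod{\pp}$, this gives
\begin{align}
    U_{p^k s} \equiv \sum_{j=0}^{d-1} L_j(0)\,\alpha_{h^k(j)}^s \pmod{\pp}.
\end{align}
Summing over $k=1,\ldots,e$ (so that each cycle $[m]$ is traversed exactly $e/c_{[m]}$ times) and regrouping by the image index $m$ yields
\begin{align}
    eU_s \equiv \sum_{m=0}^{d-1}\alpha_m^s\cdot \frac{e}{c_{[m]}}\sum_{j\in[m]} L_j(0) \;=:\; \sum_{m=0}^{d-1} M_m\,\alpha_m^s \pmod{\pp}.
\end{align}

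Because the difference $eU_s-\sum_m M_m\alpha_m^s$ belongs to $K$ and lies in every $\pp\in P$ (infinitely many prime ideals), it must vanish, giving the identity $eU_s = \sum_m M_m\alpha_m^s$ for every $s\in\N_+$. Subtracting this from $eU_s = e\sum_j L_j(s)\alpha_j^s$ produces
\begin{align}
    \sum_{j=0}^{d-1}\bigl(eL_j(s)-M_j\bigr)\alpha_j^s = 0 \qquad \text{for all } s\in\N_+.
\end{align}
Applying Lemma \ref{lem_poly_zero} (with the mild extension to polynomials whose coefficients lie in $K$, or after multiplying through by a suitable integer to clear denominators) forces each polynomial $eL_j(x)-M_j$ to vanish identically. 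Hence each $L_j$ is the constant $M_j/e$, so $\deg L_j=0$ for every $j$; and because $s$ and $t$ lie in the same cycle of $h$, we have $M_s=M_t$, whence $L_s=L_t$.

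The main obstacle is the last step: Lemma \ref{lem_poly_zero} is stated for polynomials in $\Z[x]$, whereas here the $L_j$ have coefficients in $K$. Fortunately, the proof of that lemma only uses that the rational prime $p$ avoids the denominators of the coefficients and that $\det M$ is nonzero modulo $\pp$, so the extension is routine. A secondary technical point is arranging (P1)--(P4) simultaneously, but this follows exactly as in the proof of Lemma \ref{lem_infinEdge}: there are only finitely many permutations of $\alphas$, so among the infinitely many $\pp$ with $s\sim_\pp t$ (excluding the finitely many ramified or bad primes) some permutation $h$ is induced by infinitely many Frobenius automorphisms.
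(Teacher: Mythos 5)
Your proposal is correct and follows essentially the same route as the paper's proof: chain the Dold congruences, use $L_j(p^k s)\equiv L_j(0)\pmod{\pp}$ to replace $\mathbf U$ by the constant-coefficient sequence, average over one full period of the Frobenius permutation, conclude an exact identity from its validity modulo infinitely many primes, and finish with Lemma \ref{lem_poly_zero}. Your explicit handling of the two technical points (clearing denominators via (P4)/an integer multiplier, and the $K[x]$-coefficient extension of Lemma \ref{lem_poly_zero}) is in fact slightly more careful than the paper's own write-up, which uses the multiplier $N$ for the same purpose but applies Lemma \ref{lem_poly_zero} without comment.
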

\begin{proof}
    %First we will show that like in Theorem 
    We can define the set $P$ in the same way we did it in Lemma \ref{lem_infinEdge} such that it satisfies the conditions (P1), (P2) and (P3). 

    Let $\mathbf{U}'$ be the sequence defined as 
    \begin{align}
        U'_n := \sum_{i=0}^{d-1} L_i(0)\alpha^n_i .
    \end{align}
    Let $N$ be a nonzero number such that $NL_i(x)\in \OO_K[x]$ for any $0\leqslant i <d$. We can see that
    
    \begin{align}\label{lem_infinEdge2_e1}
        eN  U_s \equiv
        \sum_{i=1}^e N  U_{p^is} \equiv
        \sum_{i=1}^e N  U'_{p^is} \equiv \cdots \equiv%\overset{(\ref{lem_infinEdge_e2})}{\equiv}
        e\sum_{k=0}^{d-1} \frac{\sum_{j\in [k]}NL_j(0) }{|[k]|}  \alpha_k^s
        \pmod \pp.
    \end{align}
    The elided formulae are analogous to (\ref{lem_infinEdge_e2}). 
    We define ${V}_n := \sum_{k=0}^{d-1} \frac{\sum_{j\in [k]}L_j(0) }{|[k]|}  \alpha_k^s$ and see that since (\ref{lem_infinEdge2_e1}) is valid for infinitely many primes $\pp$ then $eN U_s = eN{V}_s$ for every $s$ which implies $\mathbf{U} = \mathbf{V}$.

    The difference $\mathbf{U} - \mathbf{V}$ is therefore a sequence that is always equal to zero. By the first part of this Lemma the coefficients of  $\mathbf{U} - \mathbf{V}$ are all zero polynomials, which implies that 
    \begin{align}
        L_k = \frac{\sum_{j\in [k]}NL_j(0) }{|[k]|}
        \quad\quad \text{for every $0\leqslant k < d$}.
    \end{align}
    The degree of the right hand side polynomial is 0, so the degree of the left hand side must be also 0, hence $\deg L_i = 0$ for every $0\leqslant i  < d$. 
    If there are infinitely many edges between vertices $s$ and $t$ then $[s] = [t]$ and $ L_s = \frac{\sum_{j\in [s]}NL_j(0) }{|[s]|} = \frac{\sum_{j\in [t]}NL_j(0) }{|[t]|} = L_t$.

\end{proof}

With this Lemma and Corollary \ref{c_sum} it is clear the following.
\begin{theorem}\label{the_any}
    %Without any further assumptions on $\mathbf{U}$. 
    The sequence ${\mathbf U}$ almost satisfies the Dold condition if and only if ${\mathbf U}$ has the form %(\ref{eq_general_form}) 
    \begin{align}\label{eq_general_form_2}
        U_n = l_1\cdot\sum_{C_1(\beta)=0} \beta^n   \ \ +\ \cdots\ + \ \
        l_m\cdot\sum_{C_m(\beta)=0} \beta^n ,   
    \end{align}
    where $C_1, C_2, \ldots, C_m$ are irreducible factors of $C_{\mathbf U}$ and $l_1, l_2, \cdots, l_m \in K$
\end{theorem}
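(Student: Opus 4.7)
The plan is to mirror the proof of Theorem \ref{theor_general_form}, replacing the tools that required simple roots with the generalised versions just proved.

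For the implication $(\Leftarrow)$, I would argue exactly as in the corresponding direction of Theorem \ref{theor_general_form}. For every irreducible factor $C_j$ of $C_{\mathbf U}$ the inner sum $W^{(j)}_n := \sum_{C_j(\beta)=0}\beta^n$ satisfies the Dold condition by Corollary \ref{c_sum} applied to the sequence with characteristic polynomial $C_j$. Picking $N\in\N_+$ with $Nl_j\in\OO_K$ for every $j$, the sequence $N\mathbf{U}=\sum_j (Nl_j)\mathbf{W}^{(j)}$ is a finite $\OO_K$-linear combination of sequences satisfying the Dold condition. Using the equivalence (p3), the Dold condition is preserved under addition and under multiplication by elements of $\OO_K$, so $N\mathbf{U}$ itself satisfies it and hence $\mathbf{U}$ is almost realizable.

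For the implication $(\Rightarrow)$, I would start from the general closed form
\[
U_n = \sum_{i=0}^{d-1} L_i(n)\alpha_i^n
\]
with $L_i\in K[x]$ of degree below the multiplicity of $\alpha_i$, and pick $N\in\N_+$ so that $N\mathbf{U}$ satisfies the Dold condition. Lemma \ref{lem_irred} ensures that whenever $\alpha_s$ and $\alpha_t$ are roots of the same irreducible factor $C_j$ of $C_{\mathbf U}$, the indices $s$ and $t$ are joined by infinitely many edges in the graph $G_\mathbf{U}$. Applying Lemma \ref{lem_infinEdge2} to $N\mathbf{U}$, whose polynomial coefficients are $NL_i$, yields $NL_s=NL_t$ together with $\deg(NL_s)=0$; dividing by $N$ shows that all $L_i$ attached to roots of the same $C_j$ coincide with a single constant $l_j\in K$. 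Grouping the terms by irreducible factor produces the desired form (\ref{eq_general_form_2}).

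The main obstacle is verifying that the auxiliary lemmas really do cover the case of repeated roots, since Theorem \ref{theor_general_form} was established only under $\Delta_\mathbf{U}\neq 0$. Inspection of the proofs of Lemma \ref{lem_irred} and Corollary \ref{c_sum} shows that neither makes essential use of the roots being pairwise distinct; the only genuinely new phenomenon is that the coefficients $L_i$ could in principle have positive degree, and this is precisely what Lemma \ref{lem_poly_zero} rules out inside the proof of Lemma \ref{lem_infinEdge2}. Once this has been confirmed, the forward implication goes through verbatim from the $\Delta_\mathbf{U}\neq 0$ case.
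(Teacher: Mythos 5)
Your proposal is correct and matches the paper's intended argument exactly: the paper derives Theorem \ref{the_any} from Lemma \ref{lem_irred}, Lemma \ref{lem_infinEdge2} and Corollary \ref{c_sum} in precisely the way you describe, with the reverse direction by $\OO_K$-linearity of the congruences in (p3). Your added check that Lemma \ref{lem_irred} and Corollary \ref{c_sum} do not rely on $\Delta_\mathbf{U}\neq 0$, and that Lemma \ref{lem_infinEdge2} (via Lemma \ref{lem_poly_zero}) handles the positive-degree coefficients, is exactly the point the paper leaves implicit.
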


% When a sequence with $\Delta_\mathbf{U} = 0$ has the form (\ref{eq_general_form_2}) then there exists a recurrence formula for $\mathbf{U}$ that has degree less then $d' < d$. 
% $$\mathbf{U}_n = s_1\mathbf{U}_{n-1} + \cdots + s_{d'}\mathbf{U}_{n-d'}.$$ 
% And the coefficienst $s_i$ are such that $x^\prod_{i=1}^{d'}x$

The condition $\Delta_U=0$ implies that the characteristic polynomial has at least one multiple root. We can consider a polynomial $W := \rad(C_\mathbf{U}) = x^{d'} - \sum_{i=0}^{d'-1}x^is_{d'-i}$. This polynomial has degree $d'< d$ and its coefficients also form a recurrence formula for the sequence $\mathbf{U}$ 
$$ {U}_n = s_{1}{U}_{n-1} + \cdots + s_{d'}{U}_{d'}.
$$

We can now forget that $C_\mathbf{U}$ was the characteristic polynomial and pretend $W$ took this role. As $\disc(W) \not= 0$ we can use Lemma \ref{lem_fail_del} to see that $s_d'\disc(W)$ is a multiple of $\fail(\mathbf{U})$. We can also see that since $W|C_\mathbf{U}$ then $s_{d'}|r_d$ so
$$\fail(\mathbf{U})\ |\ r_d\disc(\rad(C_\mathbf{U})). $$

\section{The Dold condition for sequences $({U}_{n^t})_{n\in\N_+}$}

The following result is similar to Theorem \ref{theor_n2_final}. To be more precise, we focus on the validity of the Dold condition for the subsequence of an arbitrary linear recurrent sequence $\textbf{U}$ sampled over the powers of positive integers of a fixed exponent $t$. We show that if $t$ is a multiple of the degree of the splitting field of $C_{\textbf{U}}$ over $\Q$, then the sequence $(\rad(\Delta_K) {U}_{n^t})_{n\in\N_+}$ satisfies the Dold condition.

\begin{theorem}\label{the_n_d}
    Let $\textbf{U}$ be linear recurrent sequence with characteristic polynomial $C_{\textbf{U}}=\prod_{i=0}^{d-1}(x-\alpha_i)$ and $\Delta_\mathbf{U}\not=0$. Let $K$ be the splitting field of $C_{\textbf{U}}$ and $m=[K:\Q]$. The sequence $(r_d\Delta_\mathbf{U}\rad(\Delta_K) {U}_{n^t})_{n\in\N_+}$ satisfies the Dold condition for any $t\in\N_+$ such that $m|t$.
\end{theorem}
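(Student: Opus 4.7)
The plan is to use the closed form $U_n = \sum_{i=0}^{d-1} l_i \alpha_i^n$, valid since $\Delta_{\mathbf U} \neq 0$ implies distinct roots, and to verify the Dold condition prime ideal by prime ideal in $\OO_K$. By Lemma~\ref{lem_betterDef} together with the equivalence (p2)$\Leftrightarrow$(p3), it suffices to prove that for every rational prime $p$, every prime ideal $\pp \subset \OO_K$ over $p$ with ramification index $e$, and all $n,k \in \N_+$ with $p \nmid n$,
\[
r_d\Delta_{\mathbf U}\rad(\Delta_K)\bigl(U_{(np^k)^t} - U_{(np^{k-1})^t}\bigr) \equiv 0 \pmod{\pp^{ek}}.
\]
From equation~(\ref{eq_li2}) in the proof of Lemma~\ref{lem_fail_del}, $r_d\Delta_{\mathbf U} l_i \in \OO_K$, so the factor $r_d\Delta_{\mathbf U}$ handles the denominators and the task reduces to bounding the $\pp$-adic valuation of $\alpha_i^{n^t p^{kt}} - \alpha_i^{n^t p^{(k-1)t}}$.

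Next I would exploit the hypothesis $m \mid t$. Since the residue-field degree $f_\pp$ divides $efg = m$ and $m \mid t$, the $t$-th iterate of the Frobenius is the identity on $\OO_K/\pp$, yielding
\[
\alpha^{p^t} \equiv \alpha \pmod{\pp} \qquad\text{for every } \alpha \in \OO_K.
\]
For unramified $\pp$ (so $e=1$), Lemma~\ref{lem_allInOne} with $d=1$ applied to the exponent $p^{(k-1)t}$ upgrades this to $\alpha^{p^{kt}} \equiv \alpha^{p^{(k-1)t}} \pmod{\pp^{1+(k-1)t}}$, and $1+(k-1)t \geq k$; raising to $n^t$ preserves the congruence and already yields the required $\pp^k$-divisibility (the factor $\rad(\Delta_K)$ is a $\pp$-adic unit and plays no role here).

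The main obstacle is the ramified case $e \geq 2$, where the Frobenius congruence holds only modulo $\pp$ and Lemma~\ref{lem_allInOne} cannot be invoked directly. My plan is to run a direct bootstrap on the tower $y_j := \alpha_i^{p^{jt}}$. Writing $z = y_j - y_{j-1}$ and using $y_j = y_{j-1}^{p^t}$,
\[
y_{j+1} - y_j = \sum_{r=1}^{p^t}\binom{p^t}{r} y_{j-1}^{p^t-r} z^r.
\]
Using $\nu_\pp\bigl(\binom{p^t}{r}\bigr) = e(t-\nu_p(r))$ for $0 < r < p^t$, I would check that every summand lies in $\pp^{\nu_\pp(z)+e}$; the tightest case is $r = p^t$, for which the bound reduces to $(p^t-1)\nu_\pp(z) \geq e$, and this follows from $t \geq m \geq e$ together with $2^e \geq e+1$. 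Induction on $j$ then gives $\nu_\pp(y_k - y_{k-1}) \geq 1 + (k-1)e$.

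To finish, raising to $n^t$ preserves $\pp$-adic valuation, and for ramified $\pp$ one has $\rad(\Delta_K) \in p\OO_K \subseteq \pp^e$, so
\[
\rad(\Delta_K)\bigl(\alpha_i^{n^t p^{kt}} - \alpha_i^{n^t p^{(k-1)t}}\bigr) \in \pp^{e + 1 + (k-1)e} \subseteq \pp^{ek}.
\]
Summing over $i$ with the integer coefficients $r_d\Delta_{\mathbf U} l_i$ then completes the verification of the Dold condition.
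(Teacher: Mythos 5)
Your proposal is correct, and its skeleton coincides with the paper's: reduce via the closed form and the integrality of $r_d\Delta_{\mathbf U}l_i$ to a congruence $\alpha_i^{p^{kt}}\equiv\alpha_i^{p^{(k-1)t}}$ modulo $\pp^{ek}$, use $f_\pp\mid m\mid t$ to get $\alpha^{p^t}\equiv\alpha\pmod\pp$, handle unramified $\pp$ with Lemma~\ref{lem_allInOne}, and use the factor $\rad(\Delta_K)\in\pp^e$ to absorb the deficit at ramified primes. Where you genuinely diverge is the ramified case: the paper gets from the congruence mod $\pp$ up to mod $\pp^e$ by repeatedly invoking Lemma~\ref{lem_allInOne} (whose hypothesis, as stated, requires a congruence mod $\pp^{de}$ with $d\geqslant 1$, so its first application there is not literally covered by the lemma), whereas you run an explicit binomial bootstrap on $y_j=\alpha_i^{p^{jt}}$, using $\nu_\pp\bigl(\binom{p^t}{r}\bigr)=e(t-\nu_p(r))$ and the inequality $p^t-1\geqslant e$ (from $t\geqslant m\geqslant e$) to gain $e$ in the exponent at each step and reach $\nu_\pp(y_k-y_{k-1})\geqslant 1+(k-1)e$, after which $\rad(\Delta_K)$ supplies the missing $\pp^{e}$ since $1+ke\geqslant ek$. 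This is a self-contained and, at the ramified primes, more carefully justified version of the same estimate; the paper's route is shorter but leans on the lemma beyond its stated hypotheses. One wording quibble: raising to the power $n^t$ does not \emph{preserve} the $\pp$-adic valuation of a difference, it can only increase it via $a^M-b^M=(a-b)(a^{M-1}+\cdots+b^{M-1})$; that is the inequality you actually need, so the argument stands, but the phrasing should be corrected.
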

\begin{proof}
    Let $U_n = \sum_{i=0}^{d-1} l_i\alpha_i^n$
    In the proof of the Lemma \ref{lem_fail_del} we saw that $r_d\Delta_\mathbf{U}l_i$ is always an algebraic integer for any $0\leqslant i<d$. 

    If we focus on showing that $(r_d\Delta_\mathbf{U}\rad(\Delta_K) \mathbf{U}_{n^m})$ satisfies the Dold condition, the rest will follow by the similar argument to the one we used in the proof of Theorem \ref{theor_n2_final}.
    It will be enough to show that 
    \begin{align}\label{the_n_d_e1}
        \rad(\Delta_K)\left(x^{p^{km}} - x^{p^{(k-1)m}}\right) 
        \equiv 0 \pmod {\pp^{ek}} 
        \quad\quad \text{for any $x\in \OO_K$}
    \end{align}
    because then 
    \begin{align}
        &r_d\Delta_\mathbf{U}\rad(\Delta_K) {U}_{(sp^k)^m} -  r_d\Delta_\mathbf{U}\rad(\Delta_K) {U}_{(sp^{k-1})^m} 
        \\ &\equiv \rad(\Delta_K)\sum_{i=0}^d (r_d\Delta_\mathbf{U}l_i)\alpha_i^{(sp^k)^m} - \rad(\Delta_K)\sum_{i=0}^d (r_d\Delta_\mathbf{U}l_i)\alpha_i^{(sp^{k-1})^m}
        \\&\equiv  \sum_{i=0}^d(r_d\Delta_\mathbf{U} l_i)\rad(\Delta_K)\left(\left(\alpha_i^{s^m}\right)^{p^{km}} - \left(\alpha_i^{s^m}\right)^{p^{(k-1)m}} \right)
        % \\ &\overset{(\ref{the_n_d_e1})}{\equiv}  \sum_{i=0}^d 0
        \\  &\overset{(\ref{the_n_d_e1})}{\equiv} 0 \pmod {\pp^{ek}} \quad\quad\text{for any $k,s\in\N_+$}
    \end{align}

which shows that the sequence $(r_d\Delta_\mathbf{U}\rad(\Delta_K) \mathbf{U}_{n^m})$ satisfies the Dold condition.

To prove (\ref{the_n_d_e1}) we consider two cases.
\begin{enumerate}
    \item Prime ideal $\pp$ is not ramified and $e = 1$. Let $f$ be the inertial degree associated with the prime ideal $\pp$. We know that $ef|m$. The field $\OO_K/\pp$ has $p^f$ elements so $x^{p^f}\equiv x \pmod \pp$ for every $x\in\OO_K$. Because $f|m$ we have $x^{p^m}\equiv x \pmod \pp$. Now by Lemma \ref{lem_allInOne} we have $x^{p^{m+h}}\equiv x^{h} \pmod {\pp^{h+1}}$ for any $h\in\N_+$. Substituting $h = d(k-1)$ we get $x^{p^{km}}\equiv x^{p^{(k-1)m}} \pmod {\pp^{m(k-1)+1}}$ and this implies (\ref{the_n_d_e1}) because $m(k-1)+1 \geqslant k$.

    \item  The ideal $\pp$ is ramified and $\pp|\Delta_K$. Again we have $x^{p^{m}} \equiv x \pmod \pp$ for any $x\in\OO_K$. By Lemma \ref{lem_allInOne}, raising both sides to the power of ${p^{e-1}}$ we get $x^{p^{m+e-1}} \equiv x^{p^{e-1}} \pmod {\pp^e}$. Raising both sides to the power of $p^{m-e+1}$ gives $x^{p^{2m}} \equiv x^{p^{m}} \pmod {\pp^e}$. By Lemma \ref{lem_allInOne} we have $x^{p^{2m+h}} \equiv x^{p^{m+h}} \pmod {\pp^{e+he}}$. Substituting $h=m(k-2)$ (notice $k\geqslant 2$) yields $x^{p^{mk}} \equiv x^{p^{m(k-1)}} \pmod {\pp^{e(1+m(k-2))}}$. Now we multiply by $p$ and get   $px^{p^{mk}} \equiv px^{p^{m(k-1)}} \pmod {\pp^{e(2+m(k-2))}}$. Because $2+m(k-2) \geqslant k$ this finally implies (\ref{the_n_d_e1}).
\end{enumerate}

\end{proof}

\begin{example}
   Using the same recurrence formula as previously $U_{n} = 10U_{n-2} - U_{n-4}$ for $n>4$ but with $U_1 = 1$, $U_2 = 0$, $U_3=9$, $U_4=0$ yields the formula
   $$
    {U}_n = 
        \frac{\sqrt{3}}{12}\left(\sqrt{2}+\sqrt{3}\right)^n
        -\frac{\sqrt{3}}{12}\left(\sqrt{2}-\sqrt{3}\right)^n
        +\frac{\sqrt{3}}{12}\left(-\sqrt{2}+\sqrt{3}\right)^n
        -\frac{\sqrt{3}}{12}\left(-\sqrt{2}-\sqrt{3}\right)^n
    $$
    One can see that the sequence $\textbf{U}$ is not realizable as the coefficients near the powers of the roots of $C_{\textbf{U}}$ differ. But according to the above, the sequence $({U}_{n^4})_{n\in\N_+}$ almost satisfies the Dold condition  and its repairing factor must divide $\rad\left(\Delta_{\Q(\sqrt{2}, \sqrt{3})}\right) = \rad(2^{10}\cdot3^2) = 6$. On the other hand, since $2\nmid U_{2^4}-U_{1^4}$ and $3\nmid U_{3^4}-U_{1^4}$, we have that $6$ is the repairing factor of $({U}_{n^4})_{n\in\N_+}$.
    
\end{example}

\section{Everything, all at once}

The following table shows all the results of this paper in one place. The first column contains all the considered cases, the second the  necessary and sufficient condition for the sequence to almost satisfy the Dold condition and the third column shows a bound on the $\fail$ term, a value that $\fail(\mathbf{U})$ must divide.

\begin{center}
\begin{tabular}{||c || c | c | c ||} 
\hline
    Restrictions on $C_\mathbf{U}$ & 
    \makecell{Condition for alm. \\sat. Dold cond.} & 
    Known multiple of $\fail$ & 
    References  \\ 
\hline\hline
    $d=1$ & 
    always & 
    $r_1 = \alpha_1$ & 
    Tr. \\
\hline
    $d=2$, irreducible & 
    $l_1=l_2$ & 
    $\gcd\left(r_1,2r_2\right)$ & 
    Th.\ref{TheoremNotSquare}, Lem.\ref{lem_gcd}\\
\hline
    $d=2$, reducible & 
    $\deg L_1=0 $ & 
    \makecell{$r_2\rad(\Delta_\mathbf{U})$ \\ (only when $\Delta_\mathbf{U}\not=0$)} & 
    Th. \ref{theor_rad} \\
\hline
    \nice & 
    $l_0=\cdots=l_{d-1}$ & 
    $\gcd\left(r_1,2r_2, \ldots, dr_d \right)$ & 
    Th.\ref{the_nice}, Lem.\ref{lem_gcd}\\
\hline
    irreducible & 
    $l_0=\cdots=l_{d-1}$ & 
    $\gcd\left(r_1,2r_2, \ldots, dr_d \right)$  & 
    Th.\ref{the_part}, Lem.\ref{lem_gcd}\\
\hline
    $\Delta_\mathbf{U}\not=0$  & 
    \makecell{Coefficients near\\ the powers of the roots\\ of the same  irreducible \\factor are equal}  &
    $r_d\Delta_\mathbf{U} $ & 
    Th.\ref{theor_general_form}, Lem.\ref{lem_fail_del}\\
\hline
    any  & 
    $\deg L_i = 0$ and  the above & 
    $r_d\disc(\rad(C_\mathbf{U}))$ & 
    Lem.\ref{the_any}\\
\hline\hline
    \makecell{Sequence $(\mathbf{U}_{n^{km}})$ \\ $\Delta_\mathbf{U} \not= 0$}  & 
    always & 
    $r_d\Delta_\mathbf{U}\rad(\Delta_K)$ & 
    Th.\ref{the_n_d}\\
\hline\hline
\end{tabular}

% \vskip 0.5cm

% \begin{tabular}{||c || c | c ||} 
%  \hline
%  Restrictions on $C_\mathbf{U}$ & Condition for alm. sat. Dold cond. & Known multiple of $\fail$   \\ 
%  \hline\hline
%  $d=1$ & always & $r_1 = \alpha_1$ \\
%  \hline\hline
% \end{tabular}
\end{center}

\section{Acknowledgments}

I would like to express my gratitude to my advisor, Piotr Miska, for suggesting this topic, fixing my mistakes and endless pieces of advice.

%% The Appendices part is started with the command \appendix;
%% appendix sections are then done as normal sections
\appendix
\section{}

\begin{theorem}
    If an irreducible, non-constant polynomial $W(x)\in\Z[x]$ has a root modulo $p$ for almost every prime $p$, then $W(x)$ is a linear polynomial.
\end{theorem}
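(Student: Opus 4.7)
The plan is to argue by contrapositive and show that if $\deg W = n \geqslant 2$, then there is a positive density of primes $p$ for which $W$ has no root modulo $p$. Let $L$ be the splitting field of $W$ over $\Q$ and set $G=\gal(L/\Q)$. Since $W$ is irreducible of degree $n$, the group $G$ acts transitively on the set $R$ of the $n$ roots of $W$ in $L$. For every rational prime $p$ unramified in $L$ (all but finitely many), the factorization type of $W \pmod p$ is dictated by the cycle type of any Frobenius element $\Phi_\pp\in G$ for $\pp\mid p$: in particular, $W$ has a root modulo $p$ if and only if $\Phi_\pp$ (viewed as a permutation of $R$) fixes at least one element of $R$.

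The next step is to quantify how many elements of $G$ have a fixed point. Writing $G_\alpha$ for the stabilizer of a root $\alpha\in R$, transitivity gives $|G_\alpha|=|G|/n$ for every $\alpha$, and $1\in G_\alpha$ for every $\alpha$. Therefore the set $F\subseteq G$ of elements having at least one fixed point satisfies
\begin{align}
    |F|=\Bigl|\bigcup_{\alpha\in R}G_\alpha\Bigr|\leqslant \sum_{\alpha\in R}|G_\alpha|-(n-1)=n\cdot\frac{|G|}{n}-(n-1)=|G|-(n-1),
\end{align}
which is strictly less than $|G|$ when $n\geqslant 2$. This is the classical theorem of Jordan: a transitive permutation group on a set of size at least $2$ always contains an element with no fixed points.

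By the Chebotarev density theorem (the Frobenius density theorem already suffices here), the natural density of primes $p$ whose Frobenius conjugacy class is a fixed conjugacy class $C\subseteq G$ equals $|C|/|G|$. Summing over conjugacy classes contained in the complement of $F$ shows that a positive proportion $1-|F|/|G|>0$ of primes $p$ have no root of $W$ modulo $p$. This contradicts the hypothesis that $W$ has a root modulo $p$ for almost every prime $p$, so we must have $n=1$, i.e. $W$ is linear.

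The main obstacle is really just invoking Chebotarev (or Frobenius) cleanly: once one has the translation "root mod $p$ $\Leftrightarrow$ Frobenius has a fixed point" for unramified primes, the rest is the elementary Jordan counting argument above. The only delicate issue is handling the finitely many ramified primes and the finitely many primes dividing the leading coefficient or the discriminant of $W$, but these form a finite (hence density zero) set and can be safely absorbed into the "almost every prime" hypothesis.
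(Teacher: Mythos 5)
Your proof is correct and follows essentially the same route as the paper: translate ``$W$ has a root mod $p$'' into ``the Frobenius at $p$ fixes a root,'' apply Chebotarev, and invoke Jordan's theorem that a transitive action on a set of size at least $2$ admits a fixed-point-free element. The only cosmetic differences are that you argue by contrapositive and include the short stabilizer-counting proof of Jordan's theorem, which the paper merely cites as an elementary group-theoretic fact.
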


\begin{proof}
    Let $\alphas$ be the roots of this polynomial. Again, we consider a field $K := \Q(\alphas)$ and the ring of algebraic integers $\OO_K$. The extension $K/\Q$ is Galois with discriminant $\Delta$.

    Every element of $G := Gal(K/\Q)$ permutes the roots of $W$.
    By the Chebotarev Density Theorem, if $C\subset Gal(K/\Q)$ is a conjugacy class in $G$, then 
    $$\{\pp: \pp \text{ prime ideal, } \pp \not| \Delta, \Phi_\pp \in C\}$$
    has density $\frac{|C|}{|G|}$.

    % Fix $p$. Since $W$ has a root modulo $p$, then each automorphism of $K$ fixes this root

    % Each automorphism $\phi \in G$ acts on the set $\{\alphas\}$ by permutatnons. Moreover each $\phi$ is uniquely determined by this permutation. The group $G$ can be therefore though of as a subset of $S_n$ - the set of all permutations of $n$ elements. 

    % If $W$ has root $\pmod p$ then the decomposition of $W$ in the field $K/Kp$ must have a linear factor. Let $C\subset G$ be the set of all automorphisms that fix some root of $W$. 

    Let $\pp\not|\Delta$ be a prime ideal such that the polynomial $W$ has a root modulo $\pp$. Given an irreducible (over $\Z/p$) factor of $W$, the Frobenius automorphism $\Phi_\pp$ permutes  its roots  transitively. Because $W$ has a root modulo $\pp$, the permutation associated with $\Phi_\pp$ has a fixed point.

    For almost all $\pp$ the $\Phi_\pp$ has a fixed point, so by the Chebotarev Density Theorem all the elements of the Galois group $G$ have a fixed point. 

    As assumed, the polynomial $W$ is irreducible, which implies that the group $G$ act transitively on the set of roots of $W$. An elementary result of group theory says that if a finite group acts transitively and each element of this group has a fixed point, then this group is trivial. 

    In this context this means that the Galois group $G$ contains only the identity, which implies that the polynomial $W$ has degree one.

\end{proof}

\end{document}